\newcommand{\pb}{\rho_2}
\newcommand{\pr}{\rho_r}
\newcommand{\lin}{\mathbb{L}}
\newcommand{\x}{\mathbf{x}}
\newcommand{\kk}{\mathbf{k}}
\newtheorem{theo}{\bf Theorem}
\newtheorem{defi}{\bf Definition}
\newtheorem{lem}{\bf Lemma}
\newtheorem{cor}{\bf Corollary}
\newtheorem{prop}{\bf Proposition}
\newtheorem{rem}{\bf Remark}
\newtheorem{ex}{\bf Example}
\newcommand{\be}{\begin{equation}}
\newcommand{\eeq}{\end{equation}}
\newcommand{\xx}{\mathbf{x}}
\newcommand{\nn}{\mathbf{n}}
\begin{document}

\title{Combinatorial differential operators in: Fa\`a di Bruno formula, enumeration of ballot paths, enriched rooted trees and increasing rooted trees.}

\author{M. A. M\'endez\\ Facultad de Ciencias\\Universidad Antonio Nari\~no\\Bogot\'a, Colombia}
\date{}

\maketitle

\begin{abstract} We obtain a differential equation for the enumeration of the path length of general increasing trees. By using differential operators and their combinatorial interpretation we give a bijective proof of a version of Fa\`a di Bruno formula, and  model the generation of ballot and Dyck paths. We get formulas for its enumeration according with the height of their lattice points. Recursive formulas for the enumeration of enriched increasing trees and forests with respect to the height of their internal and external vertices are also obtained. Finally we present a generalized form of all those results using one-parameter groups in the general context of formal power series in an arbitrary number of variables. 
\end{abstract}

\section{Introduction and preliminaries} 
 Through this article a rooted tree will be thought of as a directed graph by orienting the edges toward the root. Given a rooted tree $T$ on the set of vertices $[n]$, denote by $\mathrm{d}_T(k)$, $k\in[n]$, the indegree  of vertex $k$.

Let $\phi(x)$ be formal power series $\phi(x)=\sum_{k=0}^{\infty}\phi_k\frac{x^k}{k!}$, with $\phi_0\neq 0$. We also denote  the $\mathrm{kth}$ coefficient $\phi_k$  by $\phi[k]$. Define the $\phi$-weight of a rooted tree $T$ on the set of vertices $[n]$ as the product
\begin{equation}\label{weight1}w_{\phi}(T)=\prod_{k=1}^n\phi[\mathrm{d}_T(k)].\end{equation}
Recall that an {\em increasing rooted tree} is one whose vertices (with labels in a totally ordered set)
increase along any path from the root to the leaves. 
When $\phi$ enumerates a family of structures, $w_{\phi}(T)$ enumerates all the trees obtained by enriching the vertices of $T$ with those structures.  For example, if we choose $\phi(t)=E(t)=e^t,$ since its coefficients as a exponential generating function are all equal to one, the corresponding trees are enriched with a trivial structure. They are called in the literature \emph{recursive trees}. If instead we choose $\phi(t)=\mathbb{L}(t)=\frac{1}{1-t}=\sum_{n=0}^{\infty}n!\frac{x^n}{n!}$, the exponential generating function of linear orders, it gives rise to the plane increasing trees. Sometimes they are also called plane recursive trees.  For $\phi(t)=1+t^2$ we get the complete plane binary trees. More generally,  $\phi(t)=1+t^r$ gives us the complete plane $r$-ary trees. The binary  plane trees (non necessarily complete) are obtained from $\phi(t)=(1+t)^2$. 
Increasing trees have a variety of applications. Recursive trees have been used in models of the spread of infections \cite{Moondistance}, pyramid schemes \cite{Gastwirth}, and as a simplified model for de world wide web. Plane recursive trees are a special instance of Albert-Barab\'asi free of scale networks model \cite {Barabasi}. Increasing binary trees are closely related to binary search trees and sorting algorithms.

Consider the autonomous differential equation
\begin{equation}\label{autonomous}
\begin{cases}y'=\phi(y)\\y(0)=0.\end{cases}
\end{equation}
It is well known that its solution $y(t)=\mathcal{A}_{\phi}^\uparrow(t)$ has the following interesting combinatorial interpretation in terms of increasing rooted trees. The coefficient $\mathcal{A}_{\phi}^\uparrow[n]$ of the formal power series 
$$\mathcal{A}_{\phi}^\uparrow(t)=\sum_{n=1}^{\infty}\mathcal{A}_{\phi}^\uparrow[n]\frac{t^n}{n!}$$ is the inventory of the set of $\phi$-enriched increasing rooted trees in $n$ vertices \cite{Boucher1, Leroux-Viennot, BerFlaSal, BLL}.
$$\mathcal{A}_{\phi}^\uparrow[n]=\sum_T w_{\phi}(T)$$
\begin{defi}\normalfont
The \emph{height} of a vertex in rooted tree is defined to be its  distance from the root, i.e., the number of edges in the unique path from the vertex to the root. The \emph{path length} of a tree is the sum of the heights of all its vertices.
\end{defi}
The height of vertices and the path length are very important statistics,  and their distribution in random trees have been studied by a number of authors (see for example \cite{Devroye1, Devroye2, Devroye3, Devroye4,Drmota, Kuba, Moon, Mahamoudlimit, Mahamoudsurvey, Munsonius, Neininger, Pittel} and references therein). The path length is of much importance for the analysis of algorithms since it is frequently related to the execution time \cite{Knut}. However, the solution of differential equations giving rise to the generating functions of families of increasing rooted trees  does not give information about those items. We found here a differential equation with an additional parameter $q$, whose generating function give us the number of trees classified with respect to their path length. The path length polynomial $p_n^{\phi}(q)$ so obtained is a $q$-anolog of the coefficient $\mathcal{A}_{\phi}^\uparrow[n]$.

A very simple formula for the chain rule for higher derivatives (Fa\`a di Bruno type formula) was found in \cite{stefa}. We present a  proof of it by using the species combinatorial interpretation of differential operators \cite{BLL,Joyal,Labelle,Labelle1,Leroux-Viennot}. In particular we use sums of operators of the form $x_{i+1}\partial_i$ (dart operators) acting on formal power series in a finite number of variables $x_0, x_1, x_2, \dots, x_n$. This lead us to go further and find many other applications to this kind of operators. In this way, by considering sum of dart operators $x_{i+1}\partial_i$ and of their adjoints $x_{i}\partial_{i+1}$ we show that they model the generation of upper-diagonal (ballot) paths, and the special case of Dyck paths. It allows us to get  generating functions that give information about the heights at different stages of the path, and to the associated generalized Bell polynomials, that count assemblies of upper diagonal lattice paths, according with their heights. Other versions of lattice paths can be modeled by similar operators.

By considering operators of the from $\phi(x_{i+1})\partial_i$  (Shift corolla operators)  we get recursive formulas for the enumeration of increasing trees and forests according with the height (depth) of their vertices. The framework we use here is equivalent to the formalism of weighted species over totally ordered sets. However for simplicity we avoid the use of categorical language, unnecessary in this context.

After obtaining the results of Sections \ref{sec:pathlength}, \ref{Sec:chain}, and \ref{Sec:shiftcorolla} we realized that the operators considered here are closely related the grammar approach of Chen \cite{Chen, Dumont}. However, they are not equivalent. Differential operators that are not derivations, i.e., those that involve higher order partial derivatives,  do not have free of context grammar counterpart. In Section \ref{chen} we translate, reinterpret and give new proofs in this context  of Chen interesting results about enumeration of rooted trees, and Fa\`a di Bruno grammar. Inspired on those, we give a new formula for the enumeration of Cayley's (non rooted) trees.  In Section \ref{sec:onepgroup} we present a generalized form of all those results using one-parameter groups in the general context of formal power series in an arbitrary number of variables. In this way we obtain a wide scope generalization of Chen's formula (\ref{Eq:cayley1}), that allows us to enumerate enriched rooted trees associated to differential operators (see Corollary \ref{Cor:generalchen}).

\begin{defi}\normalfont Let $F(t)\in\mathcal{R}[[t]]$ be an exponential formal power series
$$F(t):=\sum_{k=0}^{\infty} f_n\frac{t^n}{n!}$$
with coefficients in a ring $\mathcal{R}$ that contains $\mathbb{Q}$. 
For a finite totally ordered set of cardinal $n$, $V=\{v_1,v_2,\dots,v_n\}$, we define $F[V]$ to be the coefficient $f_{n}$ of the series, \begin{equation}\label{coeficiente}F[V]:=f_{|V|}.
\end{equation}
\end{defi}

\begin{rem}\normalfont   \label{rem:combinatorialconfig}We have to be careful, in the usual notation of species $F[V]$ denotes a set of combinatorial structures in a family $F$. Here, as a coefficient of the series $F(t)$, it is an element of the ring $\mathcal{R}$. This coefficient  is  thought of as the total weight (inventory) of a set of {\em combinatorial structures}, or combinatorial {\em configurations} belonging to a family $\mathcal{W}_{F}$ and having labels in the set $V$. More precisely, we assume that for every set $V$, there exists a weighted set $\mathcal{W}_F[V]$ such that $$F[V]=|\mathcal{W}_F[V]|_w=\sum_{\mathcal{O}\in \mathcal{W}_F[V]}w(\mathcal{O})=f_{|V|}.$$ There could possible be many families of weighted sets giving rise to different combinatorial interpretations to the same  series $F(t)$. But we are sure that there exists at least one, the trivial configurations, i.e., the family of unitary sets $$\mathcal{W}_F[V]=\{V\}$$ having as weight the coefficient of the series itself
$$w(V)=f_{|V|}.$$ More interesting configuration come into play  by defining them directly, or by the use of combinatorial operations on the trivial ones. We use G. 
Labelle corollas \cite{Labelle, BLL} to represent the coefficient
$F[V]$.
\end{rem}
The coefficients of the sum, product, derivative and substitution of  formal power series are given by the recipes
\begin{eqnarray}(F+G)[V] &=& F[V]+G[V]\\
\label{combinatorialproduct} (F.G)[V]&=&\sum_{V_1+V_2=V} F[V_1]G[V_2]\\
\label{combinatorialderivative}F^\prime[V]&=& F[\{\ast\}\uplus V]\\
\label{combinatorialsubstitution} F(G)[V]&=&\sum_{\pi\in\Pi[V]}F[\pi]\prod_{B\in\pi}G[B]
\end{eqnarray}
\noindent The sum in Eq. (\ref{combinatorialproduct}) is over all decompositions of $V$ into a pair of disjoint sets $(V_1,V_2)$, $V_1\uplus V_2=V$. In Eq. (\ref{combinatorialderivative}), $\ast$ is a ghost vertex that we add to $V$ as first element. In Eq. (\ref{combinatorialsubstitution}) the sum is over all the set partitions of the vertices $V$ and we assume that the series $G(t)$ has zero constant term. 

With the set theoretical notation for the coefficients of a formal power series we can rewrite the weight of a  $\phi$-enriched rooted tree, Eq. (\ref{weight1}). Recall we assume that the arcs of a rooted tree $T$ are oriented towards the root. In a rooted tree $T$ with vertices in $V$, denote by $T^{-1}(v)$ the (possible empty) set of immediate predecessors of $v$ (fiber of $v$).  The weight is now defined by
 \begin{equation}\label{weight2}
 w_{\phi}(T)=\prod_{v\in V}\phi[T^{-1}(v)]
 \end{equation}

\section{Path length generating function} \label{sec:pathlength}
 Let $T$ be an increasing tree with vertices labels in a totally ordered set set $V$. We  define the weight $\nu_{\phi}$, in order to include information about the height of their vertices,
$$\nu_{\phi}(T):=\prod_{v\in V}\phi[\mathrm{T^{-1}(v)}]q^{\mathrm{ht}(v)}=q^{\mathrm{pl}(T)}\prod_{v\in V}\phi[\mathrm{T^{-1}(v)}].$$
where $\mathrm{pl}(T)$ is the path length of $T$ and $\mathrm{ht}(v)$ is the height of vertex $v$ in $T$. 
We define  $P_{\phi}(t,q)$ as the exponential generating function of the inventory of the $\phi$-enriched trees according with their path length, which is a polynomial in $q$, 
$$p^{\phi}_n(q):=P_{\phi}[n]=\sum_T \nu_{\phi}(T).$$
 The sum over all increasing trees with vertices in $[n].$ The polynomial $p_n^{\phi}(q)$ is a $q$-analog of the inventory of $\phi$-enriched increasing trees $\mathcal{A}^{\uparrow}_{\phi}[n]$.

\begin{theo}
The generating function $P_{\phi}(t,q)$ of the $\phi$-enriched increasing trees with respect to the path length satisfies the differential equation 
\begin{equation}\label{autonomouspl}
\begin{cases} \frac{ \partial P_{\phi}(t,q)} {\partial t } =\phi(P_{\phi}(tq,q))\\ P_{\phi}(0,q)=0. \end{cases}
\end{equation}
\end{theo}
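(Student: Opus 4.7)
The plan is to follow the classical species/combinatorial argument for the autonomous equation $y'=\phi(y)$ governing $\mathcal{A}_\phi^\uparrow$, and track the extra statistic $q^{\mathrm{pl}(T)}$ carefully through the root-removal bijection. The new phenomenon is a uniform height shift: when we strip the root from an increasing tree, every remaining vertex loses one unit of height, which will translate into the rescaling $t\mapsto tq$ inside $\phi$.

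Concretely, I would start by fixing an increasing tree $T$ on a totally ordered set $V$ of cardinality $n\ge 1$. Because $T$ is increasing, the root is the minimum $\ast$ of $V$. Removing $\ast$ leaves a forest of increasing trees $T_1,\dots,T_k$ on the blocks $V_1,\dots,V_k$ of a partition $\pi$ of $V\setminus\{\ast\}$, whose roots are exactly the children of $\ast$, i.e., the fiber $T^{-1}(\ast)$. The weight then factors as
\begin{equation*}
\prod_{v\in V}\phi[T^{-1}(v)]=\phi[\pi]\prod_{i=1}^{k}\prod_{v\in V_i}\phi[T_i^{-1}(v)].
\end{equation*}
For the height statistic, every $v\in V_i$ satisfies $\mathrm{ht}_T(v)=1+\mathrm{ht}_{T_i}(v)$ while $\mathrm{ht}_T(\ast)=0$, so $\mathrm{pl}(T)=(n-1)+\sum_i\mathrm{pl}(T_i)$, giving
\begin{equation*}
\nu_\phi(T)=q^{\,n-1}\,\phi[\pi]\prod_{i=1}^{k}\nu_\phi(T_i).
\end{equation*}

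Summing over all increasing trees on $V$ and applying the combinatorial substitution recipe (\ref{combinatorialsubstitution}) to $\phi$ and the series $P_\phi(\cdot,q)$ yields, for $|V|=n\ge 1$,
\begin{equation*}
p_n^\phi(q)=q^{\,n-1}\sum_{\pi\in\Pi[V\setminus\{\ast\}]}\phi[\pi]\prod_{B\in\pi}P_\phi[B]
=q^{\,n-1}\,\bigl(\phi(P_\phi(t,q))\bigr)[n-1].
\end{equation*}
Translating back to generating functions, the coefficient of $t^{n-1}/(n-1)!$ in $\partial_t P_\phi(t,q)$ is $p_n^\phi(q)$, while the coefficient of $t^{n-1}/(n-1)!$ in $\phi(P_\phi(tq,q))$ is $q^{n-1}$ times the coefficient of $t^{n-1}/(n-1)!$ in $\phi(P_\phi(t,q))$, which is exactly the right-hand side above. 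The initial condition $P_\phi(0,q)=0$ is immediate because there are no increasing trees on the empty set. The $n=1$ case reduces to $p_1^\phi(q)=\phi_0$, matching $\phi(P_\phi(0,q))=\phi_0$ at $q^0$.

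I do not expect any genuine obstacle: the argument is essentially a weighted refinement of the known bijection for $\mathcal{A}_\phi^\uparrow$. The only subtle point worth writing out carefully is the bookkeeping that turns the global factor $q^{n-1}$ attached to the coefficient of index $n-1$ into the substitution $P_\phi(t,q)\mapsto P_\phi(tq,q)$ inside $\phi$, together with verifying that $P_\phi(\cdot,q)$ has zero constant term so that (\ref{combinatorialsubstitution}) legitimately applies.
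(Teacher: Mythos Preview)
Your argument is correct and is essentially the same as the paper's: both perform the root-removal bijection on an increasing tree, factor the weight $\nu_\phi$ into $\phi[\pi]$ times the subtree weights, and observe that each remaining vertex loses one unit of height, producing the global factor $q^{n-1}$ (equivalently $q^{|V|}$ in the paper's ghost-vertex indexing) that becomes the substitution $t\mapsto tq$. The only cosmetic difference is that the paper phrases the derivative via the ghost-element convention $\partial_t P_\phi[V]=P_\phi[\{\ast\}\uplus V]$, whereas you work directly with $p_n^\phi(q)$ and shift indices by one.
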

\begin{proof}
The coefficient of $\frac{\partial P_{\phi}}{\partial t}[V]$ on a totally ordered set $V$ is equal to 
$P_{\phi}[\{\ast\}+V].$ Since the configurations of   $\mathcal{W}_{P_{\phi}}[V]$ are increasing  $\phi$-enriched trees, for a tree $T$ of $\mathcal{W}_{P_{\phi}}[\{\ast\}+V]$ the ghost vertex takes the place of the root. The remaining vertices constitute a $\phi$-forest of weighted increasing trees. Their weight is equal to
$$\phi[\pi]\times\prod_{B\in\pi}\widehat{\nu}(T_B),$$  where $\widehat{\nu_{\phi}}(T_B)=q^{|B|}\nu_{\phi}(T_B)$, since the height of each vertex in $T_B$ is one less its original height on $T$. The total weight of the configurations in $\mathcal{W}_{P_{\phi}}[\{\ast\}+V]$ is  $$q^{|V|} \sum_{\pi\in\Pi[V]}\phi[\pi]\times\prod_{B\in \pi}P_{\phi}[B].$$
By Eq. (\ref{combinatorialsubstitution}), it is equal to $$q^{|V|}\phi(P_{\phi})[V],$$ and  we have
$$\frac{\partial P_{\phi}}{\partial t}(t,q)=\sum_{n=0}^{\infty}q^n\phi(P_{\phi})[n]\frac{t^n}{n!}=\sum_{n=0}^{\infty}\phi(P_{\phi})[n]\frac{(qt)^n}{n!}=\phi(P_{\phi}(tq,q)).$$ 
\end{proof}
As an immediate consequence we get
\begin{cor}\normalfont  
 Denote by  $$F_{\phi}(t,q)=\phi(P_{\phi}(t,q))=\sum_{n=0}^{\infty}f_n^{\phi}(q)\frac{t^n}{n!}$$   the generating function  of $\phi$-forests of $\phi$-enriched increasing trees enumerated according with their pathlength. We have
\begin{equation*}
F_{\phi}(t,q)=\frac{\partial P_{\phi}(u,q)}{\partial u}\left | _{u=\frac{t}{q}}\right . .
\end{equation*}
Equivalently, the respective coefficients satisfy the identity
\begin{equation*}
f_n^{\phi}(q)=q^{-n}p_{n+1}^{\phi }(q),\, n\geq 0.
\end{equation*}
\end{cor}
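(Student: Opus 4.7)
The plan is to derive both statements as essentially immediate consequences of the differential equation (\ref{autonomouspl}) proved in the preceding theorem, by a simple change of variable. No combinatorial argument is needed at this stage; everything is formal manipulation of power series.

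First I would observe that the theorem gives us the identity
\[
\frac{\partial P_{\phi}(u,q)}{\partial u}=\phi\bigl(P_{\phi}(uq,q)\bigr)
\]
as formal power series in $u$ (and $q$). Substituting $u=t/q$ kills the factor of $q$ inside $P_\phi$ on the right-hand side, since $(t/q)\cdot q=t$, and we obtain
\[
\frac{\partial P_{\phi}(u,q)}{\partial u}\bigg|_{u=t/q}=\phi\bigl(P_{\phi}(t,q)\bigr)=F_{\phi}(t,q),
\]
which is the first formula. The only thing to check is that the substitution $u\mapsto t/q$ makes sense as a formal operation; this is fine because $P_\phi(u,q)$ has no constant term in $u$ (we have $P_\phi(0,q)=0$), so $P_\phi(uq,q)$ also has no constant term and $\phi\circ P_\phi$ is a well-defined composition. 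Working in $\mathbb{Q}[q,q^{-1}][[t]]$ allows the substitution $u=t/q$.

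For the coefficient identity, I would extract the coefficient of $t^n/n!$ on both sides of the first formula. Writing
\[
P_{\phi}(u,q)=\sum_{m\geq 1} p_m^{\phi}(q)\,\frac{u^m}{m!},
\]
the derivative is
\[
\frac{\partial P_{\phi}(u,q)}{\partial u}=\sum_{n\geq 0} p_{n+1}^{\phi}(q)\,\frac{u^n}{n!},
\]
and substituting $u=t/q$ yields
\[
\sum_{n\geq 0} p_{n+1}^{\phi}(q)\,\frac{(t/q)^n}{n!}=\sum_{n\geq 0} q^{-n}p_{n+1}^{\phi}(q)\,\frac{t^n}{n!}.
\]
Matching with the definition $F_\phi(t,q)=\sum_{n\geq 0}f_n^\phi(q)\,t^n/n!$ gives $f_n^\phi(q)=q^{-n}p_{n+1}^\phi(q)$.

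There is no real obstacle here; the only subtle point worth mentioning is the mild bookkeeping about why $q^{-1}$ is allowed (we work over $\mathbb{Q}[q,q^{-1}]$, or equivalently we note that $p_{n+1}^\phi(q)$ is divisible by $q^n$, since every non-root vertex in an increasing tree on $n+1$ vertices has height at least one and there are exactly $n$ such vertices contributing a factor of $q$ each — this makes $q^{-n}p_{n+1}^\phi(q)$ a genuine polynomial in $q$). Beyond that, the corollary is a direct restatement of the theorem under the change of variable $u=t/q$.
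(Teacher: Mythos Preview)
Your proof is correct and follows exactly the approach the paper intends: the corollary is stated there as ``an immediate consequence'' of the theorem, with no proof written out, and your change of variable $u=t/q$ together with coefficient extraction is precisely that immediate consequence. Your additional remark that $p_{n+1}^{\phi}(q)$ is divisible by $q^{n}$ (because each of the $n$ non-root vertices has height at least $1$) is a nice bonus that the paper does not make explicit.
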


\begin{ex}\normalfont  
 The path length generating function of the recursive trees ($\phi=\exp$) satisfies the equation
\begin{equation}\label{pathlengthrecursive}
\frac{\partial P_{\exp}(t,q)}{\partial t}=e^{ P_{\exp}(tq,q)}.
\end{equation}

Taking derivatives in Eq. (\ref{pathlengthrecursive}) we obtain
\begin{equation*}
\frac{\partial^2 P_{\exp}(t,q)}{\partial t^2}=\frac{\partial P_{\exp}(t,q)}{\partial t}\frac{\partial P_{\exp}(u,q)}{\partial u} \left |_{u=tq}\right .\times q,
\end{equation*}
and from that, the recursive formula 
\begin{equation*}
\begin{cases}p_{n+1}^{\exp}(q)=\sum_{k=0}^{n-1}\binom{n-1}{k}p_{k+1}^{\exp}(q)p_{n-k}^{\exp}(q)q^{n-k}\\ \, p^{\exp}_1(q)=1,\, p^{\exp}_2(q)=q. \end{cases}
\end{equation*}
$F_{\exp}(t,q)=e^{ P_{\exp}(t,q)}$  is the path length generating function of forests of recursive trees. 
It satisfies the equation
\begin{equation*}
\frac{\partial F_{\exp}(t,q)}{\partial t}=e^{P_{\exp}(t,q)}\frac{\partial P_{\exp}(t,q)}{\partial t}=e^{P_{\exp}(t,q)}e^{P_{\exp}(tq,q)}=F_{\exp}(t,q)F_{\exp}(tq,q). 
\end{equation*} 
\end{ex}
\begin{ex}\normalfont   The path length generating function of the plane increasing trees ($\phi=\lin $) satisfies the differential equation \begin{equation}\label{pathlengthplannar}
\frac{\partial P_{\lin}(t,q)}{\partial t}=\frac{1}{1-P_{\lin}(tq,q)}
\end{equation}
From that we get
\begin{equation*}
\frac{\partial P_{\lin}(t,q)}{\partial t}=1+\frac{\partial P_{\lin}(t,q)}{\partial t}P_{\lin}(tq,q)
\end{equation*}
and we obtain the recursive formula
\begin{equation*}
\begin{cases} p_{n+1}^{\lin}(q)=\sum_{k=0}^{n-1}\binom{n}{k}p_{k+1}^{\lin}(q)p_{n-k}^{\lin}(q)q^{n-k}\\
p_{1}^{\lin}(q)=1.\end{cases}
\end{equation*}
$F_{\lin}(t,q)=\frac{1}{1- P_{\lin}(t,q)}$ is the path length generating function of the linearly ordered forests of increasing plane trees.  It clearly satisfies the differential equation
\begin{equation*}
\frac{\partial F_{\lin}(t,q)}{\partial t}=F^2_{\lin}(t,q)\frac{\partial P_{\lin}(t,q)}{\partial t}=F^2_{\lin}(t,q)F_{\lin}(tq,q).
\end{equation*}
\end{ex}
The polynomials $f^{\exp}_n(q)$ and $f^{\lin}_n(q)$  are respectively those in sequences  A126470 and  A232433 of the OEIS.  We have provided here  combinatorial interpretations for both of them.
\begin{enumerate}{}
\item The polynomial $f^{\exp}_n(q)$, sequence A126470, enumerates forests of recursive trees over $n$ vertices, according with their pathlength.
\item The polynomial $f^{\lin}_n(q)$, sequence A232433, enumerates  linearly ordered forests of plane increasing trees on $n$ vertices, according with their pathlength. 
\end{enumerate}
\begin{ex}\normalfont  
The plane binary increasing trees are obtained as the solution of Eq. (\ref{autonomous}) for $\phi(x)=(1+x)^2.$ More generally, the $r$-ary plane increasing trees are obtained from $\phi(x)=(1+x)^r=:\pr(x)$. Their path length generating function then satisfies\begin{equation*} \frac{\partial P_{\pr}(t,q)}{\partial t}=(1+P_{\pr}(tq,q))^r.
\end{equation*}
For the binary case we obtain
\begin{equation*}
\begin{cases}p_{n+1}^{\pb}(q)=q^n(2p_n^{\pb}(q)+\sum_{k=1}^{n-1}\binom{n}{k}p_k^{\pb}(q)p_{n-k}^{\pb}(q))\\
p_1^{\pb}(q)=1.
\end{cases}
\end{equation*}
\\With this recursive formula, we compute
\begin{eqnarray*}
P_{\pb}(t, q)&=& t+2q \frac{t^2}{2}+\left(4 q^3+2 q^2\right)\frac{ t^3}{6} + \left(8 q^6+4 q^5+12 q^4\right)\frac{t^4}{24}\\
  && \hspace{10pt} +\left(16 q^{10}+8 q^9+24 q^8+32 q^7+40 q^6\right) \frac{t^5}{120}\\&&\hspace{8pt}+ \left(32 q^{15}+16 q^{14}+48 q^{13}+64 q^{12}+160 q^{11}+40 q^{10}+280
   q^9+80 q^8\right)\frac{t^6}{720} +\dots 
\end{eqnarray*}
\end{ex}
\begin{ex}\normalfont  
The  plane (complete) binary trees are obtained by enriching with the function $\beta(t)=1+t^2$. We have $$ \frac{\partial P_{\beta}(t,q)}{\partial t}=1+P_{\beta}^2(tq,q).$$
From that we get the recursive formula for the path length polynomials
\begin{equation*}
\begin{cases} p_{n+1}^{\beta}(q)=q^n\sum_{k=1}^{n-1} \binom{n}{k}p_k^{\beta}(q)p_{n-k}^{\beta}(q)\\
p_1^{\beta}(q)=1.
\end{cases}
\end{equation*}
The function $\tan(t)$ is the solution of the original autonomous differential equation $y'(t)=1+y^2$, $y(0)=0$, which enumerates the alternating permutations of odd length \cite{Andre}. The polynomials $p_{n}^{\beta}(q)$ are a new kind of $q$-analogs  of the tangent numbers.
\end{ex}
\section{Chain rule for higher derivatives and enumeration of lattice paths}\label{Sec:chain}
Let $W$ and $Z$ be Banach spaces over $\mathbb{K}$, $\mathbb{K}$  being either $\mathbb{R}$ or $\mathbb{C}$. 
Assume that we have two functions $f:\Psi\rightarrow \Omega$ and $g:\Omega\rightarrow Z,$  $\Psi$ and $\Omega$ being open subsets of $\mathbb{K}$ and $W$  respectively. Assume also  that $f$ and $g$ have both $n$ continuous Fr\'echet derivatives, for some $n\in \mathbb{N}$. The following formula, alternative to the classical Fa\`a di Bruno's \cite{Faa}, was obtained in \cite{stefa}
\begin{equation}\label{HMY}
(f\circ g)^{(n)}(t)=\Delta_n f(g(t),g'(t),\dots,g^{(n)}(t)),
\end{equation}
\noindent where $\Delta_n$ is defined recursively as follows
\begin{eqnarray*}
\Delta_0f(x_0)&=&f(x_0)\\
\Delta_{j+1}f(x_0,x_1,\dots,x_{j+1})&=&\lim_{t\rightarrow 0}\frac{1}{t}[\Delta_j f(x_0+t x_1,x_1+tx_2,\dots, x_j+tx_{j+1} )].
\end{eqnarray*}
Observe that $\Delta_j f$ goes from $\Omega\times X^j$ to $Y$. The operator $\Delta_j$ is called the higher-order directional derivative. To the best of our knowledge the notion of  higher-order directional derivatives and Eq. (\ref{HMY}) together with  two other versions of it, were introduced in \cite{stefa}. For the classical version of Fa\`a di Bruno and historical remarks see for example  \cite{Comtet},\cite{Jonson}. For multivariate  generalizations see \cite{Const},\cite{Mishkov}). We will call Eq. (\ref{HMY}) the HMY formula. The HMY formula was required in order to study  the spectral Carath\'eodory-Fej\'er problem (see \cite{stefa1}). This problem consists in determining whether there exists an analytical matrix  function $F$ on the unit disc $\mathbb{D}$ with prescribed derivatives $F^{(j)}(0)$ $(1\leq j\leq n)$ and such that the eigenvalues of $F(\lambda)$ lie in $\mathbb{D}$ for all $\lambda\in\mathbb{D}$.  
In spite of its simplicity, the HMY formula is not well known to the combinatorialist audience. We first translate it into the context of formal power series. Let $\mathcal{R}$ be a ring that contains $\mathbb{Q}$. Define for each $n\geq 1$ the operator $\mathcal{D}_{n}:\mathcal{R}[[x_0,x_1,\dots,x_{n-1}]]\rightarrow\mathcal{R}[[x_0,x_1,\dots,x_{n}]]$, $\mathcal{D}_{n}=\sum_{j=0}^{n-1} x_{j+1} \partial_{j}$.  For example,  
\begin{eqnarray*}
\mathcal{D}_1&=&x_1\partial_0,\\
\mathcal{D}_2&=&x_2\partial_1+x_1\partial_0,\\
\mathcal{D}_3&=&x_3\partial_2+x_2\partial_1+x_1\partial_0.
\end{eqnarray*}
The reader may check that the correct translation of the operator $\Delta_n$ to this context is as follows
\begin{equation}
\Delta_n:=\mathcal{D}_n\mathcal{D}_{n-1}\dots\mathcal{D}_1.
\end{equation} 
Let $F(x)$ and $G(x)$ be two formal power series with $G(0)=0$. Observe that if we apply the operator $\Delta_n$ to the formal power series $F(x_0)$, we get a formal power series in the variables, $x_0, x_1,\dots, x_n$, $(\Delta_nF)(x_0,x_1,\dots,x_n)$.  HMY formula then becomes
\begin{equation}\label{delta}
F(G)^{(n)}(x)=(\Delta_nF)(G(x),G'(x),\dots,G^{(n)}(x)). 
\end{equation}
Closely related to this, Chen  \cite {Chen} defined a grammar to obtain  the nth. coefficient $z_n$ of the composition of two formal power series $P(Q(t))$, $P(t)=\sum_{k=0}^{\infty}y_k\frac{t^k}{k!}$, $Q(t)=\sum_{k=1}^{\infty}x_k\frac{t^k}{k!}$.  By translating that to the differential operators language we get that
\begin{equation}\label{chenfaa}z_n=\Gamma_n y_0,\end{equation}
\noindent where $$\Gamma_n:=\mathcal{F}_n \mathcal{F}_{n-1}\dots \mathcal{F}_1$$
$\mathcal{F}_n$ being the differential operator
$$\mathcal{F}_n=\mathcal{D}^x_n+x_1\mathcal{D}^y_n=\sum_{i=0}^{n-1}(x_{i+1} \partial_{x_i}+x_1y_{i+1} \partial_{y_i}).$$

We give a combinatorial (bijective) proof of (\ref{delta}) by reducing it to the classical Fa\`a di Bruno formula by means of a combinatorial device that we called \emph {dart operators}. The dart operator are a special type of corolla operators \cite{Leroux-Viennot}. They can be readily adapted to give a direct visual proof of (\ref{chenfaa}), as well as to several other identities in \cite{Chen} and in \cite{Dumont}. By using shift corolla operators,  we construct a family of generating functions that enumerates trees and forests of increasing  trees according with the height of their vertices. A special kind of them are the $r$-ary plane increasing trees. For $r=1$ we get back the classical Bell polynomials.\\
We begin by giving the combinatorial description of operations among  exponential generating function in a finite set of variables, following the colored species conventions as in \cite{Nava-yo}.
\subsection{The combinatorics of formal power series in many variables}\label{Sec:combinaformal}
We shall deal with exponential formal power series in a set of variables $x_0, x_1,x_2\dots, x_n$, over a ring $\mathcal{R}$ that contains $\mathbb{Q}$:
\begin{equation}\label{variasvariables}
    F(x_0,x_1,x_2,\dots,x_n)=\sum_{k_0,k_1,k_2\dots,k_n}f_{k_0,k_1,k_2,\dots,k_n}\frac{x_0^{k_0}x_1^{k_1}x_2^{k_2}\dots x_n^{k_n}}{k_0!k_1!k_2!\dots, k_n!}
\end{equation}
In order to write in a more compact way this kind of formal power series we shall use the following notation. Bold letters like $\kk$ will represent vectors of non-negative integers $(k_0, k_1, k_2,\dots, k_n)$. The vector of variables $(x_0,x_1,x_2,\dots,x_n)$ will be denoted by $\x$, and the monomial $x_0^{n_0}x_1^{n_1}x_2^{n_2}\dots x_n^{k_n}$ by $\x^{\kk}.$ The symbol $\kk!$ will denote the (finite) product of factorials $k_0!k_1!k_2!\dots k_n!$.

\noindent The series (\ref{variasvariables}) looks as follows:
\begin{equation}\label{compactgen}
    F(\x)=\sum_{\kk}f_{\kk}\frac{ \x^{\kk}}{\kk!}.
\end{equation}
A colored set, is a pair $(V,\kappa)$, where $V$ is a finite totally ordered set, and $\kappa$ is a function from $V$ to the set
  $\mathbb{N}$.
The {\em type or cardinal} of $(V,\kappa)$ is defined to be the vector of the cardinalities of the pre-images by $\kappa$ of each of the colors,
$$|(V,\kappa)|=(|\kappa^{-1}(0)|,|\kappa^{-1}(1)|,|\kappa^{-1}(2)|,\dots, |\kappa^{-1}(n)|).$$

\begin{defi}\normalfont For $F(\x)\in\mathcal{R}[[x_0,x_1,x_2,\dots,x_n]]$ we define $F[V,\kappa]$ to be the coefficient $f_{\kk}$, where $\kk$ is the type of $(V,\kappa)$, \begin{equation}\label{coeficiente}F[V,\kappa]:=f_{|(V,\kappa)|}.
\end{equation}
\end{defi}
Similarly as in Remark \ref{rem:combinatorialconfig}, the coefficient is interpreted as the weight of a family of combinatorial structures, or combinatorial configurations on the set of colored vertices $(V,\kappa)$.

It is trivial to check that
$$(F+G)[V,\kappa] = F[V,\kappa]+G[V,\kappa].$$
 At the set theoretical level, the coefficient of the series of the partial derivative is the coefficient of the original series on the colored set plus a `ghost' element colored with the color corresponding to the variable of the partial derivative

\begin{equation}
\nonumber \partial_iF[V,\kappa] = F[\{\ast\}+V,k^{+i}] 
 \end{equation}
 Here, $\{\ast\}+V$ is the totally ordered set obtained by adding $\ast$ as first element, and  $\kappa^{+i}$ is the extension of $\kappa$ that colors $\ast$ with the color $i$.
 the combinatorial formula for the coefficient of the product is given by
\begin{equation}
 \label{combinatorialproductc} (F.G)[V,\kappa]= \sum_{V_1+V_2=V} F[V_1,\kappa_{V_1}]G[V_2,\kappa_{V_2}],
\end{equation}
\noindent where the sum in Eq. (\ref{combinatorialproductc}) is over all decompositions of $V$ into a pair of disjoint sets $(V_1,V_2)$, $V_1\uplus V_2=V$, and $\kappa_{V_i}$ is the restriction of $\kappa$ to  $V_i$, $i=1,2.$
 \begin{figure}
\begin{center}
\scalebox{1} 
{\begin{pspicture}(0,-2.0200002)(11.2,2.14)
\definecolor{color2111557}{rgb}{0.6,0.0,0.6}
\pscircle[linewidth=0.04,dimen=outer,fillstyle=solid,fillcolor=yellow](0.33,-0.17000015){0.23}
\pscircle[linewidth=0.04,dimen=outer,fillstyle=solid,fillcolor=yellow](2.31,-0.13000014){0.23}
\psarc[linewidth=0.04](1.25,-1.2900001){0.85}{37.146687}{138.46822}
\psline[linewidth=0.04,fillstyle=solid,fillcolor=black](2.1809757,-0.30878055)(1.28,-1.3600001)
\psline[linewidth=0.04,fillstyle=solid,fillcolor=black](1.3,-0.03999995)(1.3,-1.32)
\psline[linewidth=0.04,fillstyle=solid,fillcolor=black](0.46,-0.32000005)(1.28,-1.34)
\pscircle[linewidth=0.04,dimen=outer,fillstyle=solid,fillcolor=yellow](1.31,0.16999982){0.23}
\usefont{T1}{ptm}{m}{n}
\rput(0.77140623,-0.20999995){$2$}
\usefont{T1}{ptm}{m}{n}
\rput(1.5914062,-0.14999995){$2$}
\usefont{T1}{ptm}{m}{n}
\rput(2.5514061,-0.46999994){$1$}
\psline[linewidth=0.04,fillstyle=solid,fillcolor=black](3.56,-0.43999994)(4.66,-1.4000001)
\psline[linewidth=0.04,fillstyle=solid,fillcolor=black](4.36,-0.06000005)(4.66,-1.4000001)
\psline[linewidth=0.04,fillstyle=solid,fillcolor=black](4.92,0.09999985)(4.66,-1.4199998)
\psline[linewidth=0.04,fillstyle=solid,fillcolor=black](5.68,-0.28000006)(4.68,-1.4000001)
\psdots[dotsize=0.28,dotstyle=asterisk](4.32,0.15999988)
\pscircle[linewidth=0.04,dimen=outer](4.32,0.16000001){0.24}
\pscircle[linewidth=0.04,dimen=outer,fillstyle=solid,fillcolor=yellow](3.47,-0.37000015){0.23}
\pscircle[linewidth=0.04,dimen=outer,fillstyle=solid,fillcolor=yellow](4.99,0.30999985){0.23}
\pscircle[linewidth=0.04,dimen=outer,fillstyle=solid,fillcolor=yellow](5.71,-0.17000015){0.23}
\pscircle[linewidth=0.04,dimen=outer,fillstyle=solid,fillcolor=yellow](6.15,-0.77000016){0.23}
\psline[linewidth=0.04,fillstyle=solid,fillcolor=black](4.66,-1.4199998)(6.0,-0.9)
\psarc[linewidth=0.04](4.73,-1.1700001){0.65}{-22.249012}{178.91907}
\usefont{T1}{ptm}{m}{n}
\rput(3.9314063,-0.42999995){$3$}
\usefont{T1}{ptm}{m}{n}
\rput(4.641406,-0.04999995){$i$}
\usefont{T1}{ptm}{m}{n}
\rput(6.0514064,-0.32999995){$2$}
\usefont{T1}{ptm}{m}{n}
\rput(6.411406,-1.1099999){$0$}
\usefont{T1}{ptm}{m}{n}
\rput(5.311406,0.01000005){$2$}
\psframe[linewidth=0.04,linecolor=color2111557,framearc=0.18213224,dimen=outer](6.68,1.4799999)(0.0,-2.0200002)
\psline[linewidth=0.04,linestyle=dashed,dash=0.16cm 0.16cm,fillstyle=solid,fillcolor=black](2.96,1.44)(2.96,-2.0)
\psframe[linewidth=0.04,linecolor=color2111557,framearc=0.18432,dimen=outer](11.2,2.14)(7.2,-2.0200002)
\psline[linewidth=0.04,fillstyle=solid,fillcolor=black](7.88,-0.6399999)(8.98,-1.6000001)
\psline[linewidth=0.04,fillstyle=solid,fillcolor=black](8.68,-0.26000005)(8.98,-1.6000001)
\psline[linewidth=0.04,fillstyle=solid,fillcolor=black](9.24,-0.10000015)(8.98,-1.6199999)
\psline[linewidth=0.04,fillstyle=solid,fillcolor=black](10.0,-0.48000005)(9.0,-1.6000001)
\psdots[dotsize=0.28,dotstyle=asterisk](8.64,-0.04000013)
\pscircle[linewidth=0.04,dimen=outer](8.64,-0.03999999){0.24}
\pscircle[linewidth=0.04,dimen=outer,fillstyle=solid,fillcolor=yellow](7.79,-0.5700002){0.23}
\pscircle[linewidth=0.04,dimen=outer,fillstyle=solid,fillcolor=yellow](9.31,0.10999985){0.23}
\pscircle[linewidth=0.04,dimen=outer,fillstyle=solid,fillcolor=yellow](10.03,-0.37000015){0.23}
\pscircle[linewidth=0.04,dimen=outer,fillstyle=solid,fillcolor=yellow](10.47,-0.97000015){0.23}
\psline[linewidth=0.04,fillstyle=solid,fillcolor=black](8.98,-1.6199999)(10.32,-1.0999999)
\psarc[linewidth=0.04](9.05,-1.37){0.65}{-22.249012}{178.91907}
\usefont{T1}{ptm}{m}{n}
\rput(8.251407,-0.62999994){$3$}
\usefont{T1}{ptm}{m}{n}
\rput(8.961407,-0.24999996){$i$}
\usefont{T1}{ptm}{m}{n}
\rput(10.371407,-0.53){$2$}
\usefont{T1}{ptm}{m}{n}
\rput(10.731406,-1.31){$0$}
\usefont{T1}{ptm}{m}{n}
\rput(9.631406,-0.18999995){$2$}
\pscircle[linewidth=0.04,dimen=outer,fillstyle=solid,fillcolor=yellow](7.67,1.3499999){0.23}
\pscircle[linewidth=0.04,dimen=outer,fillstyle=solid,fillcolor=yellow](9.65,1.3899999){0.23}
\psarc[linewidth=0.04](8.59,0.22999994){0.85}{37.146687}{138.46822}
\psline[linewidth=0.04,fillstyle=solid,fillcolor=black](9.520976,1.2112194)(8.62,0.15999985)
\psline[linewidth=0.04,fillstyle=solid,fillcolor=black](8.64,1.48)(8.64,0.19999994)
\psline[linewidth=0.04,fillstyle=solid,fillcolor=black](7.8,1.1999999)(8.62,0.17999995)
\pscircle[linewidth=0.04,dimen=outer,fillstyle=solid,fillcolor=yellow](8.65,1.6899998){0.23}
\usefont{T1}{ptm}{m}{n}
\rput(8.111406,1.3100001){$2$}
\usefont{T1}{ptm}{m}{n}
\rput(8.931406,1.37){$2$}
\usefont{T1}{ptm}{m}{n}
\rput(9.891406,1.0500001){$1$}
\usefont{T1}{ptm}{m}{n}
\rput(4.021406,-1.41){$F$}
\usefont{T1}{ptm}{m}{n}
\rput(0.51140624,-1.01){$\phi$}
\usefont{T1}{ptm}{m}{n}
\rput(7.751406,0.53000003){$\phi$}
\usefont{T1}{ptm}{m}{n}
\rput(8.341406,-1.55){$F$}
\end{pspicture}}
\end{center}
\caption{Corolla operator $\phi(\x)\partial_i$ applied to $F.$}\label{Fig:corola}
\end{figure}
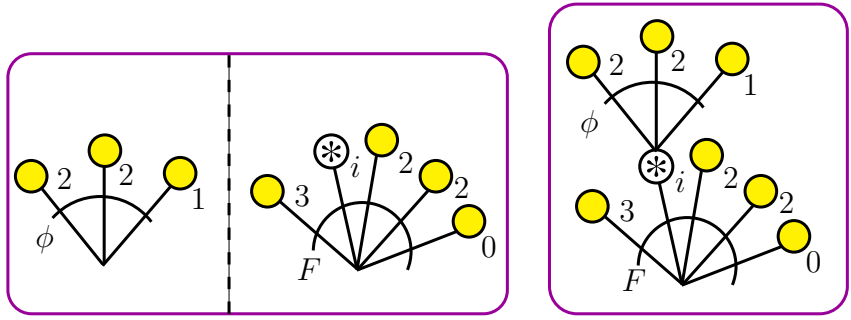
\subsection{The Fa\`a di Bruno formula}
Let $F(x)$ and $G(x)$ be two exponential formal power series $F(x)=\sum_{k=0}^{\infty}f_k
\frac{x^k}{k!}$ and $G(x)=\sum_{k=0}^{\infty}g_k \frac{x^k}{k!},$ with $g_0=0$.  Then,
the nth derivative of $F(G(x)) $ is given by  
\begin{equation}\label{faadibruno}
F(G)^{(n)}(x)=\sum_{k=1}^n F^{(k)}(G(x)).B_{n,k}(G'(x),G''(x),\dots, G^{(n)}(x)), 
\end{equation}
where $$B_{n,k}(x_1,x_2,\dots,x_n)=\sum_{k_1+k_2+\dots+k_n=k,k_1+2k_2+\dots+nk_n=n }\frac{n!}{1!^{k_1}k_1!2!^{k_2!}k_2!\dots}\x^\kk,$$ is the partial Bell polynomial. 
The type of a set partition $\pi$ is defined to be the tuple $\kk$, $k_i$ being the number blocks of size $i$ in $\pi$. Since $\frac{n!}{1!^{k_1}k_1!2!^{k_2!}k_2!\dots}$ counts the number of partitions of $[n]$ of type $\kk$, the Bell polynomials can be written in a set theoretical way as
$$B_{n,k}(\xx)=\sum_{\pi\in \Pi[n], |\pi|=k} \prod_{B\in\pi}x_{|B|}.$$
Expressing the Bell polynomials as an exponential power series,
 $$B_{n,k}(x_1,x_2,\dots,x_{n})=\sum_{\kk}\frac{n!}{1!^{k_1}k_1!2!^{k_2!}k_2!\dots}\kk!\frac{\x}{\kk!},$$
  the coefficient $B_{n,k}[V,\kappa]$ counts the pairs $(\pi,h)$, where $\pi$ is a partition of $[n]$ having exactly $k$ blocks, and $h$ is a bijection  from $\pi$ to $V$ such that the color of the image of each block equals its size: $$\kappa(h(B))=|B|,\; B\in \pi.$$
Fa\`a di Bruno formula can be rewritten  in a set theoretical way as 
\begin{equation}\label{faadibrunopart}
F(G)^{(n)}(x)=\sum_{\pi\in\Pi[n]} F^{(|\pi|)}(G(x))
\prod_{B\in\pi}G^{(|B|)}(x).
\end{equation}

\subsection{Corolla and dart operators} 

\begin{defi}\normalfont Let $\phi(\x)$ be a formal power series in a finite number of variables. An  operator of the form  $\phi(\x)\partial_i$, for some $i=0,1,\dots$ is called a {\em corolla operator}.  It 
acts on the coefficients of a formal power series $F$
as follows
\begin{equation}(\phi(\x)\cdot\partial_i F)[V,\kappa]=\sum_{V_1\uplus
V_2=V}\phi[V_1,\kappa_1]\times F[\{\ast\}\uplus V_2,\kappa_2^{+i}]\label{corolla-operator}.
\end{equation} \noindent

The configurations of $\phi(\x)\cdot\partial_i F$ are then pairs: the first a configuration of $\phi$, and the second a configuration $F$ over a colored set, the first of them a ghost element of color $i$. This can be represented in a more pictorial way as a $\phi$-enriched corolla standing over the ghost element $\ast$ (of color $i$) in the $F$-configuration (see Fig. \ref{Fig:corola}).

As a special case, the operator  $x_j.\partial_i$ is called the {\em dart operator} of type $(i,j)$. Since 
$$x_j[V_1,\kappa_1]= \begin{cases}   1 &  \mbox{if $(V_1,\kappa_1)$ is a singleton of color $j$ } \\  0 & \mbox{otherwise} \end{cases} ,$$
the structures of $(x_j\cdot\partial_i)F$ can be
represented  by drawing an edge (dart) from a singleton vertex 
(of color $j$) to the ghost element $\ast$ (of color $i$) in a configuration of $F$.
(see Fig. \ref{dart}). Such dart is said to be of type $(i,j)$.
\end{defi}

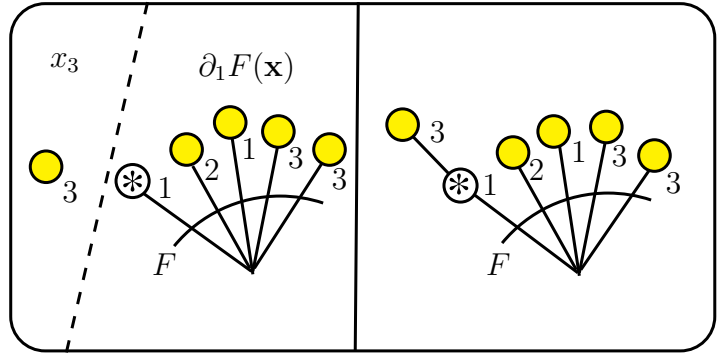
\begin{figure}
\begin{center}

\scalebox{1} 
{
\begin{pspicture}(0,-1.57)(9.4,3.13)
\psline[linewidth=0.04cm,fillcolor=black](7.299369,1.3093687)(7.5793686,-0.5306313)
\psline[linewidth=0.04cm,fillcolor=black](7.9393687,1.3293687)(7.5793686,-0.4906313)
\psline[linewidth=0.04cm,fillcolor=black](5.88,0.79)(5.34,1.35)
\psline[linewidth=0.04cm,fillcolor=black](6.18,0.55)(7.5593686,-0.4906313)
\rput{-270.0}(7.8074756,-5.588738){\pscircle[linewidth=0.04,dimen=outer,fillstyle=solid,fillcolor=yellow](6.698107,1.109369){0.23}}
\rput{-270.0}(8.627475,-5.8487377){\pscircle[linewidth=0.04,dimen=outer,fillstyle=solid,fillcolor=yellow](7.2381063,1.3893687){0.23}}
\rput{-270.0}(9.387475,-6.488737){\pscircle[linewidth=0.04,dimen=outer,fillstyle=solid,fillcolor=yellow](7.9381065,1.449369){0.23}}
\psline[linewidth=0.04cm,fillcolor=black](6.779369,0.90936875)(7.5393686,-0.4506312)
\psline[linewidth=0.04cm,fillcolor=black](8.519369,0.8893687)(7.5993686,-0.4506312)
\pscircle[linewidth=0.04,dimen=outer,fillstyle=solid,fillcolor=yellow](5.23,1.48){0.23}
\psframe[linewidth=0.04,framearc=0.19761379,dimen=outer](9.4,3.11)(0.0,-1.55)
\psline[linewidth=0.04cm,fillcolor=black](2.9593687,1.3293687)(3.2393687,-0.5106313)
\psline[linewidth=0.04cm,fillcolor=black](3.5993688,1.3493687)(3.2393687,-0.4706312)
\psline[linewidth=0.04cm,fillcolor=black](1.7793688,0.5693688)(3.2193687,-0.4706312)
\rput{-270.0}(4.967475,-2.1887374){\pscircle[linewidth=0.04,dimen=outer,fillstyle=solid,fillcolor=yellow](3.5781062,1.3893688){0.23}}
\rput{-270.0}(4.447475,-1.4287373){\pscircle[linewidth=0.04,dimen=outer,fillstyle=solid,fillcolor=yellow](2.9381063,1.5093689){0.23}}
\rput{-270.0}(5.4074755,-3.1087377){\pscircle[linewidth=0.04,dimen=outer,fillstyle=solid,fillcolor=yellow](4.2581067,1.149369){0.23}}
\rput{-270.0}(3.507475,-1.2087376){\pscircle[linewidth=0.04,dimen=outer,fillstyle=solid,fillcolor=yellow](2.3581061,1.1493686){0.23}}
\psline[linewidth=0.04cm,fillcolor=black](2.4393687,0.92936873)(3.1993687,-0.4306313)
\psline[linewidth=0.04cm,fillcolor=black](4.16,0.97)(3.2593687,-0.4306313)
\pscircle[linewidth=0.04,dimen=outer,fillstyle=solid,fillcolor=yellow](0.49,0.92){0.23}
\psline[linewidth=0.04cm,fillcolor=black](4.64,3.11)(4.6,-1.53)
\psline[linewidth=0.04cm,fillcolor=black,linestyle=dashed,dash=0.16cm 0.16cm](1.84,3.11)(0.76,-1.55)
\psdots[dotsize=0.28,dotstyle=asterisk](1.64,0.72999984)
\pscircle[linewidth=0.04,dimen=outer](1.64,0.72999996){0.24}
\pscircle[linewidth=0.04,dimen=outer,fillstyle=solid](6.0,0.66999996){0.24}
\psdots[dotsize=0.28,dotstyle=asterisk](5.98,0.6899998)
\rput{-270.0}(9.647474,-7.508736){\pscircle[linewidth=0.04,dimen=outer,fillstyle=solid,fillcolor=yellow](8.578105,1.069369){0.23}}
\psarc[linewidth=0.04](7.96,-1.25){1.82}{71.626175}{140.63731}
\psarc[linewidth=0.04](3.6,-1.29){1.82}{71.626175}{140.63731}
\usefont{T1}{ptm}{m}{n}
\rput(2.0614061,-0.38){$F$}
\usefont{T1}{ptm}{m}{n}
\rput(6.501406,-0.38){$F$}
\usefont{T1}{ptm}{m}{n}
\rput(0.81140625,0.58){$3$}
\usefont{T1}{ptm}{m}{n}
\rput(0.74140626,2.3){$x_3$}
\usefont{T1}{ptm}{m}{n}
\rput(3.1614063,2.24){$\partial_1 F(\mathbf{x})$}
\usefont{T1}{ptm}{m}{n}
\rput(2.0714064,0.64){$1$}
\usefont{T1}{ptm}{m}{n}
\rput(6.391406,0.68){$1$}
\usefont{T1}{ptm}{m}{n}
\rput(7.0114064,0.88){$2$}
\usefont{T1}{ptm}{m}{n}
\rput(3.1914062,1.16){$1$}
\usefont{T1}{ptm}{m}{n}
\rput(8.111406,1.06){$3$}
\usefont{T1}{ptm}{m}{n}
\rput(2.6914062,0.88){$2$}
\usefont{T1}{ptm}{m}{n}
\rput(7.5714064,1.12){$1$}
\usefont{T1}{ptm}{m}{n}
\rput(3.8114061,1.04){$3$}
\usefont{T1}{ptm}{m}{n}
\rput(4.391406,0.76){$3$}
\usefont{T1}{ptm}{m}{n}
\rput(8.831407,0.7){$3$}
\usefont{T1}{ptm}{m}{n}
\rput(5.6914062,1.38){$3$}
\end{pspicture} 
}

\end{center}
\caption{Structure of the dart operator $x_3\partial_1$ acting on the generic combinatorial family enumerated by 
$F(\x)$} \label{dart}
\end{figure}

For a formal power series on many variables, the configurations of $$\mathcal{D}_nF(\x)=\sum_{i=0}^{n-1}x_{i+1}\partial_iF(\x)$$
are configurations of $F(\x)$ with (at most) one dart of type $(i,i+1)$ for some $i$ from $0$ to
$n-1$.

\noindent We have the following

\begin{theo}\label{stefa} Let $F(x_0)$ be a formal power series in one variable.
Then we have the identity
\begin{equation*}
(\Delta_n F)(x_0,x_1,\dots,x_n)=\sum_{k=1}^n
F^{(k)}(x_0)B_{n,k}(x_1,x_2,\dots,x_n).
\end{equation*}
\end{theo}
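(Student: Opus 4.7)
The plan is to prove the identity by induction on $n$, using the recursion $\Delta_n = \mathcal{D}_n\Delta_{n-1}$ together with the elementary recursion for set partitions: every $\pi' \in \Pi[n]$ is obtained uniquely from some $\pi \in \Pi[n-1]$ by either adjoining $\{n\}$ as a new singleton block or inserting the element $n$ into one of the existing blocks of $\pi$.

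For the base case $n=1$, one has $\Delta_1 F = x_1\partial_0 F = x_1 F'(x_0) = F^{(1)}(x_0)\, B_{1,1}(x_1)$, as required. For the inductive step, assume
\[
\Delta_{n-1} F(x_0) = \sum_{\pi \in \Pi[n-1]} F^{(|\pi|)}(x_0)\prod_{B\in\pi}x_{|B|},
\]
which by the set-theoretic form of the Bell polynomials coincides with $\sum_{k}F^{(k)}(x_0)B_{n-1,k}(x_1,\ldots,x_{n-1})$. Apply $\mathcal{D}_n=\sum_{i=0}^{n-1}x_{i+1}\partial_i$ to each summand via Leibniz. The term $i=0$ differentiates $F^{(|\pi|)}$ to $F^{(|\pi|+1)}$ and multiplies by $x_1$, producing $x_1 F^{(|\pi|+1)}(x_0)\prod_{B\in\pi}x_{|B|}$; this is precisely the summand corresponding to the partition $\pi\cup\{\{n\}\}\in\Pi[n]$, in which $\{n\}$ is a new singleton block (hence contributes a factor $x_1$). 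For $1\le i\le n-1$, the Leibniz rule gives $\partial_i\prod_{B\in\pi}x_{|B|}=\sum_{B:|B|=i}\prod_{B'\neq B}x_{|B'|}$, singling out each block of size exactly $i$; multiplication by $x_{i+1}$ then converts that factor $x_i$ into $x_{i+1}$. Combinatorially this amounts to replacing $B$ by $B\cup\{n\}$, a block of size $i+1$ contributing the factor $x_{i+1}$.

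Collecting contributions over $\pi\in\Pi[n-1]$ and over all insertion positions for $n$ visits each $\pi'\in\Pi[n]$ exactly once (these are the fibres of the map $\pi'\mapsto \pi'|_{[n-1]}$), so
\[
\Delta_n F(x_0) \;=\; \sum_{\pi'\in\Pi[n]}F^{(|\pi'|)}(x_0)\prod_{B'\in\pi'}x_{|B'|} \;=\; \sum_{k=1}^{n}F^{(k)}(x_0)\,B_{n,k}(x_1,\ldots,x_n),
\]
as claimed.

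The only place where one can slip is the bookkeeping of multiplicities in the Leibniz expansion, but this matches perfectly: the number of times $\partial_i$ picks out a factor $x_i$ equals the number of blocks of $\pi$ of size $i$, which is exactly the number of ways $n$ can be inserted into an existing block of that size to form a $\pi'\in\Pi[n]$. An alternative route would be a purely bijective proof based on the dart-operator combinatorics illustrated in Figure \ref{dart}, reading off from a configuration of $\Delta_n F$ the chains of darts it records and identifying each chain with a block of the partition; however, the inductive argument above is more economical and does not require unpacking how successive applications of $\mathcal{D}_j$ merge dart sources with previously created ghosts.
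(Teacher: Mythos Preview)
Your inductive proof is correct and self-contained. The paper, however, argues directly and bijectively: it interprets the configurations of $\Delta_nF$ as ``$k$-towered $F$-configurations'' --- structures of $F$ with $k$ increasing towers of darts sitting on ghost vertices --- and then reads off from each such tower a block of a partition of the $n$ ghost elements (the top label of a tower of height $i$ has color $i$, giving the type-preserving bijection that defines $B_{n,k}[V_2,\kappa_2]$), while the $F$-structure beneath, with its $k$ ghost sockets, contributes $F^{(k)}(x_0)$. Your approach trades this global picture for a clean local recursion via the Leibniz rule and the standard ``insert $n$'' bijection $\Pi[n]\to\Pi[n-1]$; it is shorter and avoids the tower bookkeeping, but the paper's bijection is what motivates the later generalizations (Theorem~\ref{treechain} and the ballot-path analogue), where the dart/corolla picture carries over verbatim and an inductive Leibniz argument would be clumsier to state. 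You essentially anticipated this in your closing remark.
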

\begin{proof} The operator $\mathcal{D}_1$ adds to the configurations of $F$ a dart of type
$(0,1)$. The operator $\mathcal{D}_2=x_2\partial_1+x_1\partial_0$ adds to the structures of
$\mathcal{D}_1 F$ or well a dart of type $(1,2)$, or one of type $(0,1)$ whose
ghost element takes the place of a label of the original structure in $F$. Since configurations of $F(x_0)$ 
has only vertices of color $0$, the ghost element of darts of type $(1,2)$ has to be on
the top of a dart of type $(0,1)$ already in the structure of $\mathcal{D}_1 F$, forming a
tower of darts of size two. Iterating this procedure it can be easily seen that the
structures of $\Delta_nF[V,\kappa]$ are configurations of $F$ with $k$ towers of darts, for some
$k$ between $1$ and $n$ ($k$-towered $F$-configurations, see Fig. \ref{Bell}.(a)). The
labels of the ghost elements in each tower increase from bottom to top. The number
of ghost elements between bottom and top of each tower is equal to the color of the label
on the top. The total number of ghost elements in each structure is equal to $n$. Let
$(V_2,\kappa_2)$ be the colored set given by the labels on the top of the towers. The towers
clearly define a partition on the set of ghost elements, plus a type-preserving bijection
from the partition to $(V_2,\kappa_2)$ (see Fig. \ref{Bell}.(b)). In other words, one of the structures counted by
$B_{n,k}[V_2,\kappa_2]$. Over the colored set of the remaining vertices
$(V_1,\kappa_1)$, $V_1=V-V_2$, $\kappa_1(V_1)=0$, we have a structure of the
$k^{\underline{\mathrm{th}}}$ derivative of $F$,
$F^{(k)}[V_1,\kappa_1]=\partial_0^kF(x_0)[V_1,\kappa_1]$. This correspondence between k-towered
$F$-structures associated to $\Delta_n F$ and configurations of $F^{(k)}(x_0)\cdot B_{n,k}(x_1,x_2,\dots,x_n)$
is clearly reversible. 
\end{proof}
The following corollary (HMY formula) follows directly from the previous theorem and the Fa\`a di Bruno
formula.
\begin{cor}\normalfont  Let $F$ and $G$ be two formal power series such that $G(0)=0$.
Then the  $n$-derivative of the formal power series $F(G(x))$ is given by the formula:
\begin{equation*} (F(G(x)))^{(n)}=\Delta_n F(G(x), G'(x),G''(x),G'''(x),\dots,G^{(n)}(x)).\end{equation*}
\end{cor}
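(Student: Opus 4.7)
The plan is to observe that this corollary is essentially a substitution argument: Theorem \ref{stefa} expresses $\Delta_n F$ as a polynomial identity in the variables $x_0, x_1, \ldots, x_n$, and Fa\`a di Bruno (\ref{faadibruno}) expresses $(F \circ G)^{(n)}(x)$ in exactly the same shape, with $x_0$ replaced by $G(x)$ and $x_i$ replaced by $G^{(i)}(x)$ for $i \geq 1$. So the proof is a one-line matching.

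More precisely, first I would invoke Theorem \ref{stefa} to write
\[
(\Delta_n F)(x_0, x_1, \ldots, x_n) \;=\; \sum_{k=1}^n F^{(k)}(x_0)\, B_{n,k}(x_1, x_2, \ldots, x_n).
\]
Then I would substitute $x_0 := G(x)$ and $x_i := G^{(i)}(x)$ for $1 \leq i \leq n$ into both sides. On the right-hand side this yields
\[
\sum_{k=1}^n F^{(k)}(G(x))\, B_{n,k}(G'(x), G''(x), \ldots, G^{(n)}(x)),
\]
which, by the classical Fa\`a di Bruno formula (\ref{faadibruno}) and the assumption $G(0)=0$ that guarantees the composition $F \circ G$ is well-defined as a formal power series, equals $(F(G(x)))^{(n)}$. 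Combining the two yields the claimed identity.

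There is essentially no obstacle here: the combinatorial content has been absorbed into Theorem \ref{stefa}, whose bijective proof already established the tower-decomposition of $\Delta_n F$-configurations into a $k$-fold derivative $F^{(k)}$ on the non-ghost vertices and a partition structure enumerated by $B_{n,k}$ on the ghost vertices. What remains is purely a formal substitution. If one wanted a fully combinatorial reading without invoking the classical Fa\`a di Bruno formula as a black box, one could instead specialize the variables $x_i$ to the series $G^{(i)}(x)$ at the level of configurations: each top-of-tower vertex of color $i$ (and the $i$ ghost vertices below it) is replaced by a $G$-configuration on a block of size $i$, matching the set-theoretical form (\ref{faadibrunopart}) of Fa\`a di Bruno directly, but this is only a bookkeeping variant of the same argument.
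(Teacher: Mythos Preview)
Your proof is correct and matches the paper's approach exactly: the paper simply states that the corollary ``follows directly from the previous theorem and the Fa\`a di Bruno formula,'' and you have spelled out precisely this substitution argument.
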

\begin{cor}\normalfont  \label{Cor:functions}
The polynomials $\Delta_k x_0^n$ counts the functions $f:[k]\rightarrow [n]$ according with the sizes of their preimages. Setting the weight $w(f)$ as the product  $\prod_{i=1}^n x_{|f^{-1}(i)|}$, we have\begin{equation}\label{Eq:functions}
\Delta_k x_0^{n}=\sum_{f:[k]\rightarrow [n]}w(f).
\end{equation}

Equivalently, the coefficient $\Delta_k x_0^n[V,\kappa]$ is equal to the number of pairs $(f,h)$, where $f$ is as above, and $h$ is a bijection $h:[n]\rightarrow V$ such that the color of $h(j)$ equals the size of its preimage by $f$,
$$\kappa(h(j))=|f^{-1}(j)|.$$
\begin{proof}
By Theorem \ref{stefa} \begin{equation*} \Delta_k x_0^n=\sum_{j=1}^k(n)_j x_0^{n-j}B_{k,j}(x_1,x_2,\dots,x_k)=\sum_{j=1}^k (n)_j\sum_{\pi\in\Pi[k], |\pi|=j}x_0^{n-j}\prod_{B\in\pi}x_{|B|}.\end{equation*}
The monomial $x_0^{n-j}\prod_{B\in\pi}x_{|B|}$ is the weight of any function having as preimages the blocks of the partition $\pi$, $|\pi|=j$. There are $(n)_j$ of such  functions, which proves Eq. (\ref{Eq:functions}). The coefficient $\Delta_kx_0^n[V,\kappa]$  is $\kk!$ times the number of functions that have weight $\x^\kk$ ($\kk$ being the type of $(V,\kappa)$). The coefficient  $\kk!$ is exactly the number of  bijections $h$. 
\end{proof} 
\end{cor}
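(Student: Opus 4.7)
The plan is to apply Theorem \ref{stefa} directly to $F(x_0)=x_0^n$ and then reinterpret the resulting expansion as a sum over functions $[k]\to[n]$ weighted by preimage sizes.

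First, I would compute the ingredients of the formula in Theorem \ref{stefa}. Since $F^{(j)}(x_0)=(n)_j\,x_0^{n-j}$ (falling factorial), and since $B_{k,j}(x_1,\dots,x_k)=\sum_{\pi\in\Pi[k],\,|\pi|=j}\prod_{B\in\pi}x_{|B|}$ by the set-theoretic form of the Bell polynomials recalled just before Section 3.3, we obtain
\begin{equation*}
\Delta_k x_0^{n}=\sum_{j=1}^{k}(n)_j\sum_{\pi\in\Pi[k],\,|\pi|=j}x_0^{n-j}\prod_{B\in\pi}x_{|B|}.
\end{equation*}

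Next, the combinatorial heart of the argument is to match each monomial on the right-hand side with a unique function $f:[k]\to[n]$. A function $f$ is determined by the unordered partition $\pi$ of $[k]$ into nonempty preimages (with $j=|\pi|=|\mathrm{Im}\,f|$ blocks), together with the injection $\pi\hookrightarrow[n]$ sending each block $B$ to $f(B)$. The number of such injections is precisely $(n)_j$, so summing $(n)_j$ over partitions of $[k]$ with $j$ blocks counts all functions with image of size $j$. For such an $f$, the weight is
\begin{equation*}
w(f)=\prod_{i=1}^{n}x_{|f^{-1}(i)|}=x_0^{n-j}\prod_{B\in\pi}x_{|B|},
\end{equation*}
where the $x_0^{n-j}$ factor accounts for the $n-j$ elements of $[n]$ outside the image (whose preimages are empty). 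This is exactly the monomial appearing in the expansion, so summing over $(f)$ reproduces $\Delta_k x_0^n$ and proves Eq.~(\ref{Eq:functions}).

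For the second, colored version, I would extract the coefficient $\Delta_k x_0^n[V,\kappa]$ from the generating function identity. If $\kk=(k_0,k_1,\dots)$ is the type of $(V,\kappa)$, then $\Delta_k x_0^n[V,\kappa]=\kk!$ times the coefficient of $\x^{\kk}/\kk!$, i.e., $\kk!$ times the number of functions $f:[k]\to[n]$ with $w(f)=\x^{\kk}$. To describe the $\kk!$ factor combinatorially, I would observe that $\kk!$ is exactly the number of bijections $h:[n]\to V$ such that $\kappa(h(j))=|f^{-1}(j)|$, because for each color $i$ the preimages in $[n]$ of that color (of which there are $k_i$) can be freely matched with the elements of $\kappa^{-1}(i)\subseteq V$ (of which there are also $k_i$). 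This identifies $\Delta_k x_0^n[V,\kappa]$ with the number of pairs $(f,h)$ as claimed. The only mildly delicate step is this last bookkeeping with the $\kk!$ factor, but it is forced once the monomial-to-function correspondence is in hand.
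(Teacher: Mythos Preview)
Your proof is correct and follows essentially the same approach as the paper: both apply Theorem~\ref{stefa} to $F(x_0)=x_0^n$, expand via the set-theoretic form of the Bell polynomials, identify each term $(n)_j\,x_0^{n-j}\prod_{B\in\pi}x_{|B|}$ with the functions whose nonempty preimages are the blocks of $\pi$, and then interpret the $\kk!$ factor as the number of color-compatible bijections $h:[n]\to V$. Your write-up is slightly more explicit about why $(n)_j$ counts the injections $\pi\hookrightarrow[n]$ and why $\kk!$ counts the bijections $h$, but the argument is the same.
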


\begin{figure}\begin{center}\scalebox{0.8}{
\begin{pspicture}(0,-4.93)(15.22,4.93)
\definecolor{color16689b}{rgb}{1.0,1.0,0.2}
\definecolor{color16698}{rgb}{0.4,0.4,0.0}
\pscircle[linewidth=0.04,dimen=outer,fillstyle=solid,fillcolor=color16689b](13.4,0.59){0.3}
\pscircle[linewidth=0.04,dimen=outer,fillstyle=solid,fillcolor=color16689b](5.46,2.15){0.3}
\psline[linewidth=0.04cm](0.94,-1.11)(1.02,-2.11)
\pscircle[linewidth=0.04,dimen=outer,fillstyle=solid,fillcolor=color16689b](0.92,-0.89){0.3}
\psline[linewidth=0.04cm](2.22,-0.53)(2.22,-1.39)
\psline[linewidth=0.04cm](4.08,-0.51)(4.12,-1.27)
\pscircle[linewidth=0.04,linecolor=color16698,dimen=outer,fillstyle=solid](2.273281,-1.61){0.3}
\pscircle[linewidth=0.04,linecolor=color16698,dimen=outer,fillstyle=solid](1.0332813,-2.33){0.3}
\pscircle[linewidth=0.04,linecolor=color16698,dimen=outer,fillstyle=solid](4.14,-1.47){0.3}
\pscircle[linewidth=0.04,linecolor=color16698,dimen=outer](2.08,-3.39){0.3}
\pscircle[linewidth=0.04,linecolor=color16698,dimen=outer](3.34,-2.65){0.3}
\pscircle[linewidth=0.04,linecolor=color16698,dimen=outer](5.38,-2.57){0.3}
\pscircle[linewidth=0.04,linecolor=color16698,dimen=outer](4.1,-3.71){0.3}
\usefont{T1}{ptm}{m}{n}
\rput(5.3942184,-2.58){$\ast_1$}
\usefont{T1}{ptm}{m}{n}
\rput(2.2542186,-1.5903125){$\ast_2$}
\psline[linewidth=0.04cm](5.44,-1.05)(5.38,-2.29)
\psline[linewidth=0.04cm](5.48,0.41)(5.48,-0.63)
\psline[linewidth=0.04cm](5.44,1.87)(5.46,0.79)
\psline[linewidth=0.04cm](4.06,2.49)(4.08,1.41)
\psline[linewidth=0.04cm](4.08,0.97)(4.08,-0.11)
\psline[linewidth=0.04cm](1.24,-2.53)(1.84,-3.23)
\psline[linewidth=0.04cm](2.34,-3.45)(3.82,-3.71)
\psline[linewidth=0.04cm](3.48,-2.43)(4.02,-1.71)
\psline[linewidth=0.04cm](4.34,-1.65)(5.18,-2.37)
\psline[linewidth=0.04cm](1.34,-2.37)(3.06,-2.61)
\psline[linewidth=0.04cm](1.26,-2.13)(2.02,-1.73)
\psline[linewidth=0.04cm](2.56,-1.57)(3.84,-1.43)
\psline[linewidth=0.04cm](3.5,-2.89)(3.9,-3.49)
\psline[linewidth=0.04cm](4.36,-3.55)(5.24,-2.81)
\usefont{T1}{ptm}{m}{n}
\rput(13.380938,0.6){$c$}
\usefont{T1}{ptm}{m}{n}
\rput(3.2642186,-2.62){$e$}
\usefont{T1}{ptm}{m}{n}
\rput(2.0442185,-3.3954687){$f$}
\usefont{T1}{ptm}{m}{n}
\rput(4.0642185,-3.6903124){$g$}
\usefont{T1}{ptm}{m}{n}
\rput(0.8542187,-0.8954688){$h$}
\usefont{T1}{ptm}{m}{n}
\rput(1.0142188,-2.3103125){$\ast_4$}
\usefont{T1}{ptm}{m}{n}
\rput(4.1742187,-1.44){$\ast_5$}
\psline[linewidth=0.04cm](2.24,-0.13)(2.22,0.85)
\usefont{T1}{ptm}{m}{n}
\rput(10.834218,1.74){$\{\ast_5, \ast_7, \ast_9\}$}
\pscircle[linewidth=0.04,linecolor=color16698,dimen=outer](10.18,-1.57){0.3}
\pscircle[linewidth=0.04,linecolor=color16698,dimen=outer](8.95328,-2.29){0.3}
\pscircle[linewidth=0.04,linecolor=color16698,dimen=outer](12.06,-1.43){0.3}
\pscircle[linewidth=0.04,linecolor=color16698,dimen=outer](10.0,-3.35){0.3}
\pscircle[linewidth=0.04,linecolor=color16698,dimen=outer](11.26,-2.61){0.3}
\pscircle[linewidth=0.04,linecolor=color16698,dimen=outer](13.293281,-2.53){0.3}
\pscircle[linewidth=0.04,linecolor=color16698,dimen=outer](12.02,-3.67){0.3}
\usefont{T1}{ptm}{m}{n}
\rput(13.274219,-2.52){$\ast_1$}
\usefont{T1}{ptm}{m}{n}
\rput(10.15422,-1.5503125){$\ast_2$}
\psline[linewidth=0.04cm](9.16,-2.49)(9.76,-3.19)
\psline[linewidth=0.04cm](10.26,-3.39)(11.74,-3.65)
\psline[linewidth=0.04cm](11.4,-2.39)(11.94,-1.67)
\psline[linewidth=0.04cm](12.26,-1.61)(13.1,-2.33)
\psline[linewidth=0.04cm](9.26,-2.33)(10.98,-2.57)
\psline[linewidth=0.04cm](9.18,-2.09)(9.94,-1.69)
\psline[linewidth=0.04cm](10.48,-1.55)(11.76,-1.39)
\psline[linewidth=0.04cm](11.42,-2.85)(11.82,-3.45)
\psline[linewidth=0.04cm](12.28,-3.51)(13.16,-2.77)
\usefont{T1}{ptm}{m}{n}
\rput(11.240937,-2.6154687){$e$}
\usefont{T1}{ptm}{m}{n}
\rput(9.980938,-3.3554688){$f$}
\usefont{T1}{ptm}{m}{n}
\rput(12.000937,-3.6503124){$g$}
\usefont{T1}{ptm}{m}{n}
\rput(8.934218,-2.2703125){$\ast_4$}
\usefont{T1}{ptm}{m}{n}
\rput(12.034219,-1.4){$\ast_5$}
\usefont{T1}{ptm}{m}{n}
\rput(10.964219,2.7){$\{\ast_2, \ast_8\}$}
\usefont{T1}{ptm}{m}{n}
\rput(11.07422,3.72){$\{\ast_4\}$}
\usefont{T1}{ptm}{m}{n}
\rput(10.814219,0.6){$\{\ast_1, \ast_3, \ast_6\}$}
\psframe[linewidth=0.04,framearc=0.25,dimen=outer](15.22,4.93)(7.84,-4.89)
\psframe[linewidth=0.04,framearc=0.25,dimen=outer](7.38,4.89)(0.0,-4.93)
\psline[linewidth=0.04cm,linestyle=dashed,dash=0.16cm 0.16cm](7.84,-0.31)(15.18,-0.23)
\psline[linewidth=0.04cm,arrowsize=0.05291667cm 2.0,arrowlength=1.4,arrowinset=0.4]{->}(11.72,3.75)(12.74,3.77)
\psline[linewidth=0.04cm,arrowsize=0.05291667cm 2.0,arrowlength=1.4,arrowinset=0.4]{->}(11.88,2.67)(12.88,2.69)
\psline[linewidth=0.04cm,arrowsize=0.05291667cm 2.0,arrowlength=1.4,arrowinset=0.4]{->}(12.02,1.71)(13.04,1.71)
\psline[linewidth=0.04cm,arrowsize=0.05291667cm 2.0,arrowlength=1.4,arrowinset=0.4]{->}(11.92,0.57)(13.12,0.55)
\usefont{T1}{ptm}{m}{n}
\rput(9.404219,-0.9){$G_c^{(4)}(x_0)$}
\usefont{T1}{ptm}{m}{n}
\rput(8.764218,4.42){$B_{9,4}$}
\usefont{T1}{ptm}{m}{n}
\rput(1.3042186,3.9){$\Delta_9 G_c$}
\pscircle[linewidth=0.04,dimen=outer,fillstyle=solid,fillcolor=color16689b](13.04,3.77){0.3}
\usefont{T1}{ptm}{m}{n}
\rput(13.014218,3.7645311){$h$}
\pscircle[linewidth=0.04,dimen=outer,fillstyle=solid](2.2332811,-0.31){0.3}
\pscircle[linewidth=0.04,dimen=outer,fillstyle=solid,fillcolor=color16689b](2.24,1.09){0.3}
\pscircle[linewidth=0.04,dimen=outer,fillstyle=solid](4.06,-0.29){0.3}
\pscircle[linewidth=0.04,dimen=outer,fillstyle=solid](5.44,0.65){0.3}
\pscircle[linewidth=0.04,dimen=outer,fillstyle=solid](4.0932813,1.23){0.3}
\pscircle[linewidth=0.04,dimen=outer,fillstyle=solid](5.46,-0.81){0.3}
\pscircle[linewidth=0.04,dimen=outer,fillstyle=solid,fillcolor=color16689b](13.16,2.75){0.3}
\usefont{T1}{ptm}{m}{n}
\rput(13.11422,2.7445312){$d$}
\pscircle[linewidth=0.04,dimen=outer,fillstyle=solid,fillcolor=color16689b](13.34,1.75){0.3}
\usefont{T1}{ptm}{m}{n}
\rput(13.320937,1.7445313){$a$}
\pscircle[linewidth=0.04,dimen=outer,fillstyle=solid,fillcolor=color16689b](4.06,2.73){0.3}
\usefont{T1}{ptm}{m}{n}
\rput(4.0242186,2.7245312){$a$}
\usefont{T1}{ptm}{m}{n}
\rput(5.3842187,2.1445312){$c$}
\usefont{T1}{ptm}{m}{n}
\rput(1.0942189,-2.78){$0$}
\usefont{T1}{ptm}{m}{n}
\rput(2.4742188,-3.68){$0$}
\usefont{T1}{ptm}{m}{n}
\rput(4.554219,-3.88){$0$}
\usefont{T1}{ptm}{m}{n}
\rput(5.754219,-2.9){$0$}
\usefont{T1}{ptm}{m}{n}
\rput(3.8342187,-2.88){$0$}
\usefont{T1}{ptm}{m}{n}
\rput(4.3142185,-1.92){$0$}
\usefont{T1}{ptm}{m}{n}
\rput(2.6942186,-1.86){$0$}
\usefont{T1}{ptm}{m}{n}
\rput(1.2942188,-1.22){$1$}
\usefont{T1}{ptm}{m}{n}
\rput(2.5142188,-0.7){$1$}
\usefont{T1}{ptm}{m}{n}
\rput(4.354219,-0.64){$1$}
\usefont{T1}{ptm}{m}{n}
\rput(5.8142185,-1.14){$1$}
\usefont{T1}{ptm}{m}{n}
\rput(5.8742185,0.32){$2$}
\usefont{T1}{ptm}{m}{n}
\rput(4.454219,0.9){$2$}
\usefont{T1}{ptm}{m}{n}
\rput(2.6142187,0.76){$2$}
\usefont{T1}{ptm}{m}{n}
\rput(13.654219,2.66){$2$}
\usefont{T1}{ptm}{m}{n}
\rput(13.454219,3.64){$1$}
\usefont{T1}{ptm}{m}{n}
\rput(13.874219,1.6){$3$}
\usefont{T1}{ptm}{m}{n}
\rput(13.8542185,0.52){$3$}
\usefont{T1}{ptm}{m}{n}
\rput(4.4142184,2.4){$3$}
\usefont{T1}{ptm}{m}{n}
\rput(5.8542185,1.86){$3$}
\usefont{T1}{ptm}{m}{n}
\rput(8.694219,-2.72){$0$}
\usefont{T1}{ptm}{m}{n}
\rput(9.474218,-3.64){$0$}
\usefont{T1}{ptm}{m}{n}
\rput(12.07422,-4.2){$0$}
\usefont{T1}{ptm}{m}{n}
\rput(13.33422,-3.0){$0$}
\usefont{T1}{ptm}{m}{n}
\rput(12.114219,-1.9){$0$}
\usefont{T1}{ptm}{m}{n}
\rput(11.014218,-2.16){$0$}
\usefont{T1}{ptm}{m}{n}
\rput(10.274219,-2.06){$0$}
\usefont{T1}{ptm}{m}{n}
\rput(0.4754688,-4.28){(a)}
\usefont{T1}{ptm}{m}{n}
\rput(8.305469,-4.24){(b)}
\usefont{T1}{ptm}{m}{n}
\rput(4.0742188,1.2496876){$\ast_9$}
\usefont{T1}{ptm}{m}{n}
\rput(4.054219,-0.28){$\ast_7$}
\usefont{T1}{ptm}{m}{n}
\rput(5.454219,0.66){$\ast_6$}
\usefont{T1}{ptm}{m}{n}
\rput(5.454219,-0.82){$\ast_3$}
\usefont{T1}{ptm}{m}{n}
\rput(2.2142186,-0.2903125){$\ast_8$}
\usefont{T1}{ptm}{m}{n}
\rput(2.1742187,1.0845313){$d$}
\end{pspicture} }

\end{center}
\caption{A configuration  of 
$\Delta_9 G_c=\sum_{k=1}^9 G_c^{(k)}(x_0)B_{9,k}(\x)$, $G_c(x)$ being the 
series that  enumerates simple and connected graphs.}\label{Bell}
\end{figure}
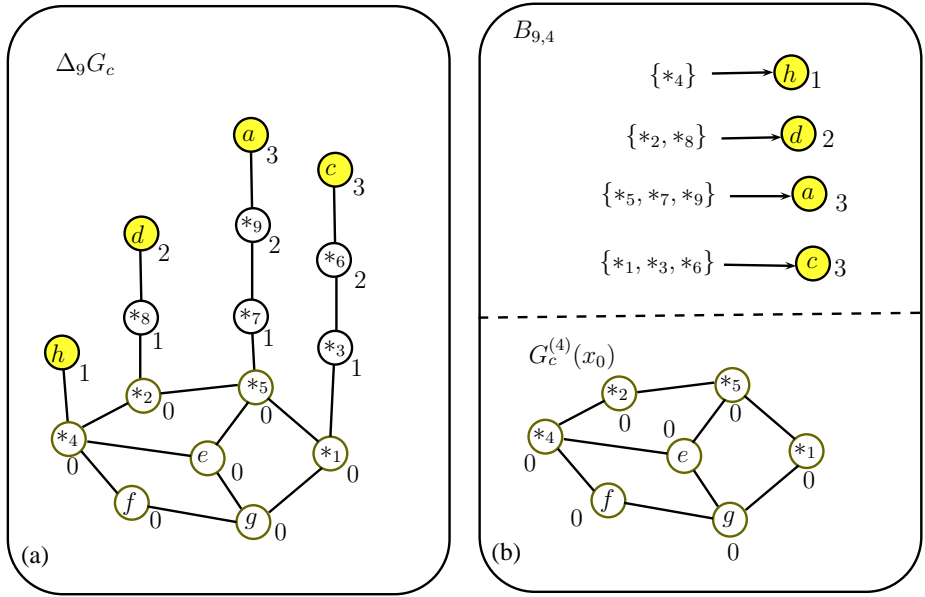
 \subsection{Ballot and Dyck paths}\label{Dyck}
 Suppose that we have an election with two candidates. Assume that during the counting of $n$ ballots one of the candidates always stays ahead of the other. Such kind of ballot counting will be called good. Formally a {\em good ballot counting} is a function $$g:V\rightarrow \{-1,1\}$$ such that for every $k$, $1\leq k\leq n$, $S_m=\sum_{j=1}^{m}g(v_j)\geq 0.$ A good ballot counting will be identified with the signed set \begin{equation}\label{signedset}g\equiv \{v_1^+,v_2^{g(v_2)},\dots,v_n^{g(v_n)} \}.\end{equation}
 The {\em ballot cardinal} of the signed set $\{v_1^+,v_2^{g(v_2)},\dots,v_n^{g(v_n)} \}$ is defined to be the sum $S_n$, \begin{equation}\label{ballotcardinal}
 |\{v_1^+,v_2^{g(v_2)},\dots,v_n^{g(v_n)} \}|_b=\sum_{j=1}^{n}g(v_j)\geq 0
 \end{equation}
 \begin{rem}\label{rem:kindsofpahs}\normalfont
 	 The partial sums $S_m$ form a one dimensional lattice walk that never crosses zero. There are two equivalent ways of representing these one dimensional lattice walks.
 
 \begin{enumerate}
 	\item \label{aboveaxis}By the  graphics $\{(m,S_m)|m=0, 1,\dots,n\}$, drew as a two dimensional lattice path with north-east, $(1,1)$, and south-east, $(1,-1)$, steps. It stays above the $x$-axis (see Fig. \ref{lattice1} (1)).
 	\item \label{abovediagonal} By codifying each positive ballot as a north step, $(0,1)$, and the negatives as an east step, $(1,0)$. This is the path associated to the graphic of the set $\{(\frac{m-S_m}{2},\frac{S_m+m}{2})|m=0,1,2,\dots,n\}$. These lattice paths stay above the diagonal. The partial sum $S_m$ is the height of the point $(\frac{m-S_m}{2},\frac{S_m+m}{2})$ with respect to the diagonal (see Fig. \ref{lattice1} (2)).
 \end{enumerate}
 If $S_n=0$, the resulting path is called a {\em Dyck path}.
\end{rem}
   \begin{figure}
   	\begin{center}
   		{
   			\begin{pspicture}(0,-2.8267713)(14.160842,2.8267713)
   			\definecolor{colour0}{rgb}{0.5019608,0.5019608,0.5019608}
   			\definecolor{colour2}{rgb}{0.6,0.0,0.6}
   			\definecolor{colour1}{rgb}{0.2,0.0,1.0}
   			\definecolor{colour3}{rgb}{1.0,0.6,0.0}
   			\rput(0.1516111,-1.2067713){\psgrid[gridwidth=0.028222222, subgridwidth=0.00412, gridlabels=5.4pt, subgriddiv=1, gridlabelcolor=white, gridcolor=colour2, subgridcolor=colour0](0,0)(8,0)(0,3)}
   			\psline[linecolor=colour1, linewidth=0.052, arrowsize=0.05291667cm 2.1,arrowlength=1.4,arrowinset=0.0]{->}(6.1516113,0.8132287)(7.1516113,1.7732288)
   			\psline[linecolor=colour1, linewidth=0.05, arrowsize=0.05291667cm 2.1,arrowlength=1.4,arrowinset=0.0]{->}(5.1716113,1.7732288)(6.1716113,0.7932287)
   			\psline[linecolor=colour1, linewidth=0.052, arrowsize=0.05291667cm 2.1,arrowlength=1.4,arrowinset=0.0]{->}(4.1516113,0.8132287)(5.1516113,1.7732288)
   			\psline[linecolor=colour1, linewidth=0.052, arrowsize=0.05291667cm 2.1,arrowlength=1.4,arrowinset=0.0]{->}(3.151611,-0.18677129)(4.1516113,0.7932287)(4.2116113,0.8532287)
   			\psline[linecolor=colour1, linewidth=0.052, arrowsize=0.05291667cm 2.1,arrowlength=1.4,arrowinset=0.0]{->}(2.171611,0.7732287)(3.151611,-0.20677128)
   			\psline[linecolor=colour1, linewidth=0.052, arrowsize=0.05291667cm 3.42,arrowlength=1.4,arrowinset=0.0]{->}(1.1516111,-0.20677128)(2.151611,0.7732287)
   			\psline[linecolor=colour1, linewidth=0.052, arrowsize=0.05291667cm 3.44,arrowlength=1.4,arrowinset=0.0]{->}(0.1516111,-1.1867713)(1.1516111,-0.20677128)
   			\rput(11.160842,-2.193438){\psgrid[gridwidth=0.028222222, subgridwidth=0.014111111, gridlabels=0.0pt, subgriddiv=1, gridcolor=colour2, subgridcolor=colour0](0,0)(-2,0)(3,5)}
   			\psline[linecolor=colour1, linewidth=0.052, arrowsize=0.05291667cm 2.1,arrowlength=1.4,arrowinset=0.0]{->}(9.164945,-2.1401045)(9.144944,-1.0934379)
   			\psline[linecolor=colour1, linewidth=0.052, arrowsize=0.05291667cm 2.0,arrowlength=1.4,arrowinset=0.0]{->}(9.185174,-0.1899111)(10.197686,-0.20010206)
   			\psline[linecolor=colour1, linewidth=0.052, arrowsize=0.05291667cm 2.0,arrowlength=1.4,arrowinset=0.0]{->}(9.1582775,-1.1288226)(9.1582775,-0.09497642)
   			\psline[linecolor=colour1, linewidth=0.052, arrowsize=0.05291667cm 2.0,arrowlength=1.4,arrowinset=0.0]{->}(10.182983,1.81166)(11.224944,1.7998954)
   			\psline[linecolor=colour1, linewidth=0.052, arrowsize=0.05291667cm 2.0,arrowlength=1.4,arrowinset=0.0]{->}(10.154265,0.8219467)(10.164462,1.8598953)
   			\psline[linecolor=colour1, linewidth=0.052, arrowsize=0.05291667cm 2.0,arrowlength=1.4,arrowinset=0.0]{->}(11.167508,1.8378441)(11.1695595,2.8232286)
   			\psline[linecolor=colour1, linewidth=0.052, arrowsize=0.05291667cm 2.0,arrowlength=1.4,arrowinset=0.0]{->}(11.161612,2.7947974)(12.219454,2.801464)
   			\psline[linecolor=colour1, linewidth=0.052, arrowsize=0.05291667cm 2.0,arrowlength=1.4,arrowinset=0.0]{->}(10.171855,-0.18908781)(10.160085,0.89144266)
   			\psline[linecolor=colour3, linewidth=0.04, linestyle=dashed, dash=0.17638889cm 0.10583334cm](9.171611,-2.173438)(14.151611,2.8132286)
   			\rput[bl](3.891611,-2.7667713){$(1)$}
   			\rput[bl](10.831611,-2.8267713){$(2)$}
   			\psline[linecolor=colour1, linewidth=0.048, arrowsize=0.05291667cm 2.1,arrowlength=1.4,arrowinset=0.0]{->}(7.1716113,1.7732288)(8.151611,0.8132287)
   			\end{pspicture}}
   		 \end{center}\caption{Equivalent lattice paths}\label{lattice1}
   \end{figure}
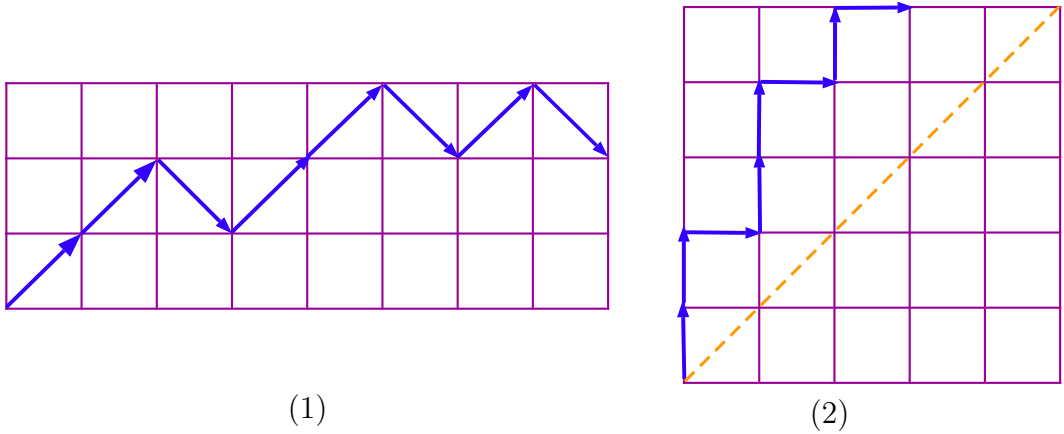
   Consider the adjoints of  dart operators $(x_{i+1}\partial_i)^*=x_i\partial_{i+1}$, $i=0,\dots,n-1$, and their sum
   $\mathcal{D}_n^*=\sum_{i=0}^{n-1}x_{i}\partial_{i+1}.$ The ballot operator $\mathcal{B}_n$  is defined as the sum
   \begin{equation*}
   \mathcal{B}_n=\mathcal{D}_n+\mathcal{D}_n^*,
   \end{equation*}
   The product
   \begin{equation}\label{lattice}
   \mathcal{P}_n=\mathcal{B}_n\mathcal{B}_{n-1}\dots \mathcal{B}_1,
   \end{equation}
   has very interesting properties.
   If the variable $x_{k+1}$ represents a vertex of height $k+1$ in a combinatorial structure, the adjoint (decreasing) dart operator $x_k\partial_{k+1}$ put a ghost vertex in its place and adds a vertex whose height is decreased by one. The operator $\mathcal{P}_n$ applied to $x_0$ may represent any of the two equivalent kinds of lattice paths in Remark \ref{rem:kindsofpahs}. We choose the above-diagonal representation, because it is more directly related to the $q$-analog of Catalan numbers in terms of Dyck paths. The indeterminate $x_k$ appearing at each stage of the walk gives us the height of the lattice point with respect to the diagonal $y=x$ (see Fig. \ref{lattice}). Specifically, $$\mathcal{P}_n x_0=b^{[0]}_n(x_0,x_1,\dots,x_n)=\sum_{k=0}^n b^{[0]}_{n,k} x_k,$$ where $b^{[0]}_{n,k}$ is the number of $n$-steps above-diagonal paths from $(0,0)$ and having final height $k=S_n$. Or equivalently, above-diagonal paths from $(0,0)$ to $(\frac{n-k}{2},\frac{n+k}{2})$. The coefficient of $x_0$, $C_n=b^{[0]}_{n,0}$ is the number of Dyck paths (Catalan number) \cite{Carlitz1}.

   In a similar way we get that 
   $$\mathcal{P}_n x_j=b^{[j]}_n(x_r,x_{r+1},\dots,x_{n+j})=\sum_{k=r}^{n+j} b^{[j]}_{n,k} x_k,\; r=\mathrm{min}\{0, n-j,\}$$ 
   where $b^{[j]}_{n,k}$ is the number of above-diagonal $n$-steps paths from $(0,j)$ and having final height $k$.  
   
   We now modify the lattice operators in order to keep track of the information about the height of some points of the path. We set
    $$\mathcal{D}_n^{q*}:=\sum_{i=0}^{n-1} q_ix_i\partial_{i+1},$$
    
    $$\mathcal{B}_n^q:=\mathcal{D}+\mathcal{D}_n^{q*},$$
    and 
    \begin{eqnarray}\label{qlattice1}\mathcal{P}^q_n&:=&\mathcal{B}_n^q\mathcal{B}_{n-1}^q\dots \mathcal{B}_1^q\\\label{qlattice2}
    b^{[j]}_n(\mathbf{x},\mathbf{q})&:=&\mathcal{P}^q_n x_j.
    \end{eqnarray}
    
    The parameters $q_i$ in the polynomials $b^{[j]}_n(\mathbf{x},\mathbf{q})$ weight the paths according with the heights of the points after horizontal steps. For example, the weight of the path in Fig. \ref{lattice} is equal to $x_2 q_1q_2^2$. Making the substitution $q_i\leftarrow q^i$ in $b^{[j]}_n(\mathbf{x},\mathbf{q})$ weights each path according with the area between the path and the diagonal. In particular,  the coefficient of $x_0$ in $b^{[0]}_n(\mathbf{x},\mathbf{q})$ after subindex rising in the $q$ parameters,  is the area $q$-analog of the Catalan number $C_n(q)$ \cite{Carlitz, Carlitz1}.
    From its definition in Eq. (\ref{qlattice2}), $b^{[j]}_n(\mathbf{x},\mathbf{q})$ satisfies the recursive formula:
    \begin{equation}
    b^{[j]}_n(\mathbf{x},\mathbf{q})=\mathcal{B}_n^q b^{[j]}_{n-1}(\mathbf{x},\mathbf{q})
    \end{equation}
    \subsubsection{Ballot Partitions}
    
    \begin{defi}\normalfont A ballot partition over a (totally ordered) set $V$, is an ordinary partition on $V$ together with a good ballot on each block of it. Equivalently, each block of the partition is a signed set as in Eq. (\ref{signedset}). 
    \end{defi}
    Enumerating the ballot partition according to the ballot cardinals, Eq. (\ref{ballotcardinal}), we obtain the following generalization of partial and total Bell polynomials:
    \begin{eqnarray*}
    	B_{n,k}^{\mathcal{B}}(\mathbf{x})&:=&\sum_{\pi\in\Pi[n], |\pi|=k}\prod_{B\in\pi} x_{|B|_b}\\
        Y_n^{\mathcal{B}}(\mathbf{x})&:=&\sum_{k=1}^{n}B_{n,k}^{\mathcal{B}}(\mathbf{x}).
    \end{eqnarray*}
    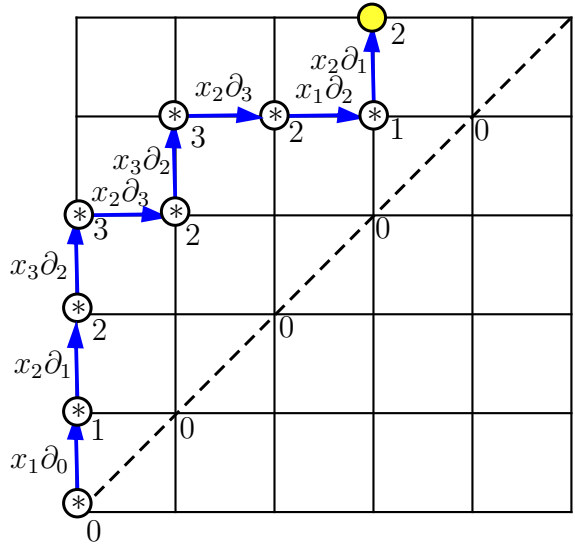
\begin{figure}
 	\begin{center}

\psscalebox{1.0 1.0} 
{
	\begin{pspicture}(0,-3.579192)(7.466022,3.579192)
	\definecolor{colour0}{rgb}{0.5019608,0.5019608,0.5019608}
	\definecolor{colour2}{rgb}{1.0,1.0,0.2}
	\psline[linecolor=black, linewidth=0.04, linestyle=dashed, dash=0.17638889cm 0.10583334cm](0.9283983,-3.1832323)(7.448398,3.3634343)
	\rput(0.88693136,-3.1987784){\psgrid[gridwidth=0.028222222, subgridwidth=0.00412, gridlabels=0.0pt, subgriddiv=1, gridlabelcolor=magenta, unit=1.3158181cm, subgridcolor=colour0](0,0)(5,0)(0,5)
		\psset{unit=1.0cm}}
	\rput[bl](2.255065,-2.2080808){$0$}
	\rput[bl](4.855065,0.45191926){$0$}
	\rput[bl](3.575065,-0.86808074){$0$}
	\rput[bl](6.175065,1.7519194){$0$}
	\rput[b](1.2076576,0.429697){$3$}
	\rput[bl](0.004415584,-2.6771717){$x_1\partial_0$}
	\rput[bl](0.05220779,-1.4122367){$x_2\partial_1$}
	\rput[bl](0.0,-0.0935353){$x_3\partial_2$}
	\rput[bl](1.0823377,0.87789327){$x_2\partial_3$}
	\rput[bl](1.3657142,1.3025686){$x_3\partial_2$}
	\rput[b](5.164156,3.0319192){$2$}
	\rput[bl](2.458961,2.2726984){$x_2\partial_3$}
	\rput[bl](3.991794,2.6383262){$x_2\partial_1$}
	\rput[bl](3.7917652,2.248889){$x_1\partial_2$}
	\rput[bl](1.015065,-3.579192){$0$}
	\rput[b](1.1676576,-2.2458584){$1$}
	\rput[b](1.1928427,-0.90956223){$2$}
	\rput[b](2.4335835,0.35043776){$2$}
	\rput[b](3.806917,1.7282156){$2$}
	\rput[bl](2.4036362,1.6604908){$3$}
	\psline[linecolor=blue, linewidth=0.05, arrowsize=0.013cm 4.26,arrowlength=1.61,arrowinset=0.0]{->}(3.4331198,2.067818)(4.4025574,2.0812383)(4.7192335,2.042603)
	\pscircle[linecolor=black, linewidth=0.04, fillstyle=solid, dimen=outer](3.517655,2.079192){0.20277837}
	\rput[b](3.5361803,1.9865375){$\ast$}
	\psline[linecolor=blue, linewidth=0.05, arrowsize=0.013cm 4.26,arrowlength=1.61,arrowinset=0.0]{->}(4.842618,2.0959425)(4.817322,3.0940588)(4.8250575,3.2852705)
	\pscircle[linecolor=black, linewidth=0.04, fillstyle=solid, dimen=outer](4.8376546,2.079192){0.20277837}
	\rput[b](4.85618,1.9865375){$\ast$}
	\pscircle[linecolor=black, linewidth=0.04, fillstyle=solid,fillcolor=colour2, dimen=outer](4.815065,3.3791919){0.2}
	\psline[linecolor=blue, linewidth=0.05, arrowsize=0.013cm 4.26,arrowlength=1.61,arrowinset=0.0]{->}(0.9026179,-3.0640574)(0.8773216,-2.0659413)(0.8850577,-1.8747294)
	\pscircle[linecolor=black, linewidth=0.04, fillstyle=solid, dimen=outer](0.8976548,-3.0808082){0.20277837}
	\rput[b](0.9161803,-3.1734626){$\ast$}
	\psline[linecolor=blue, linewidth=0.05, arrowsize=0.013cm 4.26,arrowlength=1.61,arrowinset=0.0]{->}(0.9026179,-0.46405745)(0.8773216,0.5340587)(0.8850577,0.72527057)
	\pscircle[linecolor=black, linewidth=0.04, fillstyle=solid, dimen=outer](0.8976548,-0.48080814){0.20277837}
	\rput[b](0.9161803,-0.5734626){$\ast$}
	\psline[linecolor=blue, linewidth=0.05, arrowsize=0.013cm 4.26,arrowlength=1.61,arrowinset=0.0]{->}(0.83311975,0.7478179)(1.8025572,0.7612382)(2.1192336,0.7226029)
	\pscircle[linecolor=black, linewidth=0.04, fillstyle=solid, dimen=outer](0.9176548,0.7591919){0.20277837}
	\rput[b](0.93618035,0.6665374){$\ast$}
	\psline[linecolor=blue, linewidth=0.05, arrowsize=0.013cm 4.26,arrowlength=1.61,arrowinset=0.0]{->}(2.202618,0.8159425)(2.1773217,1.8140587)(2.1850576,2.0052705)
	\pscircle[linecolor=black, linewidth=0.04, fillstyle=solid, dimen=outer](2.1976547,0.79919183){0.20277837}
	\rput[b](2.2161803,0.7065374){$\ast$}
	\psline[linecolor=blue, linewidth=0.05, arrowsize=0.013cm 4.26,arrowlength=1.61,arrowinset=0.0]{->}(0.9026179,-1.8440574)(0.8773216,-0.8459413)(0.8850577,-0.6547294)
	\pscircle[linecolor=black, linewidth=0.04, fillstyle=solid, dimen=outer](0.8976548,-1.8608081){0.20277837}
	\rput[b](0.9161803,-1.9534626){$\ast$}
	\rput[b](5.146917,1.7682155){$1$}
	\psline[linecolor=blue, linewidth=0.05, arrowsize=0.013cm 4.26,arrowlength=1.61,arrowinset=0.0]{->}(2.0931199,2.067818)(3.0625572,2.0812383)(3.3792336,2.042603)
	\pscircle[linecolor=black, linewidth=0.04, fillstyle=solid, dimen=outer](2.1776547,2.079192){0.20277837}
	\rput[b](2.1961803,1.9865375){$\ast$}
	\end{pspicture}
}
\end{center}\caption{Representation of the action of $\mathcal{P}_8$ on $x_0.$}\label{lattice}
\end{figure}

Similarly as in Theorem \ref{stefa}, we get the following Theorem.
\begin{theo}
Let $F(x_0)$ be a series as in Theorem \ref{stefa}.
Then, the series $(\mathcal{P}_n F)(\mathbf{x})$ satisfy
the identity
\begin{equation}\label{ballotchain}
(\mathcal{P}_n F)(x_0,\dots, x_n)=\sum_{k=1}^n
F^{(k)}(x_0)B^{\mathcal{B}}_{n, k}(x_0, x_1,\dots, x_n).
\end{equation}
\end{theo}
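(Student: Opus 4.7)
The plan is to mimic the bijective proof of Theorem~\ref{stefa}, extending the dart-tower interpretation to ballot towers. I would argue, by induction on $n$, that the combinatorial configurations enumerated by $(\mathcal{P}_n F)[V,\kappa]$ are precisely $F$-structures decorated with $k$ disjoint ballot towers (for some $1\leq k\leq n$) attached at color-$0$ ghost vertices, where a ballot tower of size $m$ encodes a good ballot of length $m$ whose cardinal equals the color of the top vertex.

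For the base case $n=1$ note that $x_0\partial_1 F=0$, as $F$ depends only on $x_0$, and hence $\mathcal{B}_1 F=x_1\partial_0 F$ produces exactly one ballot tower of height $1$ (the unique good ballot $\{v_1^+\}$ of cardinal $1$) on a color-$0$ ghost of an $F'$-structure, matching $F'(x_0)B^{\mathcal{B}}_{1,1}(x_0,x_1)=F'(x_0)x_1$.

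For the inductive step, apply $\mathcal{B}_n=\sum_{i=0}^{n-1}(x_{i+1}\partial_i+x_i\partial_{i+1})$ to a generic configuration of $\mathcal{P}_{n-1}F$. Each summand adds one dart: an up-dart $x_{i+1}\partial_i$ appends a $+$ step (color $i\to i+1$), and a down-dart $x_i\partial_{i+1}$ appends a $-$ step (color $i+1\to i$). The newly created ghost either is identified with the singleton at the top of an existing ballot tower --- extending that tower by one step --- or lies lone inside the innermost $F$-structure, in which case it must be color $0$ and therefore starts a new tower via $x_1\partial_0$. Because each operator $\mathcal{B}_j$ contributes exactly one singleton and one ghost, and because each ghost can coincide with the singleton of at most one later operator, the identifications chain ghosts with singletons into disjoint linear towers, with no branching.

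Reading the color sequence along a tower from bottom to top yields a path starting at $0$, changing by $\pm 1$ at each step, and staying nonnegative (no $x_{-1}$ operator exists) --- precisely a good ballot in the sense of Section~\ref{Dyck}, whose cardinal equals the color of the top singleton. The operators belonging to a single tower form a block of a partition of $[n]$, and summing over all such partitions and over all good ballots on each block produces $B^{\mathcal{B}}_{n,k}(x_0,\dots,x_n)$; combined with the factor $F^{(k)}(x_0)$ coming from the $k$ lone color-$0$ ghosts sitting inside the $F$-structure, this yields $\sum_{k=1}^n F^{(k)}(x_0)B^{\mathcal{B}}_{n,k}(x_0,\dots,x_n)$. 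The main obstacle is a careful verification of the no-branching property through the recursive set-theoretic expansion of $\mathcal{P}_n$: unlike in Theorem~\ref{stefa}, the presence of down-darts means that a given color may reappear at several heights within one tower, so one must check that a later $\partial_c$ still identifies its ghost with a unique earlier singleton rather than attaching to an intermediate occurrence.
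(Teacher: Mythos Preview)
Your approach is correct and is exactly the route the paper has in mind: the paper's own proof reads in full ``Similarly as in Theorem~\ref{stefa}, we get the following Theorem,'' so the intended argument is precisely the tower bijection you describe, with up- and down-darts encoding the $\pm 1$ steps of a good ballot on each block.

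The ``main obstacle'' you flag at the end, however, is not a genuine obstacle and dissolves once you keep track of which vertices are live. At every stage of the iteration the only colored vertices visible to the next $\partial_c$ are (i) the original colour-$0$ vertices of the $F$-structure and (ii) the \emph{current top singleton} of each tower. Every intermediate position in a tower has already been converted into a ghost by an earlier $\partial$ and is therefore no longer present as a variable in the formal power series; it cannot be hit again. So even if the colour $c$ has appeared several times in the history of a given tower, at the moment $\mathcal{B}_j$ acts there is at most one live vertex of colour $c$ on that tower (its top, when the top happens to have colour $c$), and the ghost lands there unambiguously. The down-darts therefore create no branching and no attachment ambiguity; the mechanism is identical to that of Theorem~\ref{stefa}. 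The only new feature is that a tower top may have colour $0$, in which case an incoming $x_1\partial_0$ can either extend that tower or start a fresh one on an $F$-vertex --- and this dichotomy is exactly what distinguishes ``same block'' from ``new block'' in the ballot partition, so it is accounted for on the right-hand side.
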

\section{Shift corolla operators and increasing trees}\label{Sec:shiftcorolla} Let $\phi(x)$ be a formal power series with coefficients in $\mathbb{R}$.
Consider the autonomous differential equation with initial condition the indeterminate $x$, 

\begin{equation}\label{autonomous1}
\begin{cases}y'=\phi(y)\\y(0)=x.\end{cases}
\end{equation}
It is the same autonomous differential as in Eq. (\ref{autonomous}), but having as initial condition the variable $x$. The solution has a similar combinatorial  interpretation in terms of increasing trees. It is a formal power series in two variables $\mathcal{A}^{\uparrow}_{\phi}(t,x)$   whose coefficients count $\phi$-enriched increasing trees with two sorts of vertices. The variable $t$ corresponding to internal vertices  and the variable $x$ to the leaves \cite{Leroux-Viennot}. The internal vertices are weighted using the coefficients of the formal power series $\phi(x)$. If $\phi[0]\neq 0$, the internal vertices could have indegree zero. When $\phi[0]=0$, all the internal vertices has positive indegree, and in that case the trees are called \emph{extended} \cite{Knut}. Expressing $\mathcal{A}^{\uparrow}_{\phi}(t,x)$ as an exponential generating function in $t$ with coefficients in  $\mathbb{C}[[x]]$,
$$\mathcal{A}^{\uparrow}_{\phi}(t,x)=\sum_{n=1}^{\infty}\mathcal{T}_n^{\phi}(x)\frac{t^n}{n!},$$
the series $\mathcal{T}_n^{\phi}(x)$ is the exponential generating function of increasing $\phi$-enriched trees having $n$ internal vertices, enumerated according with their number of leaves. 
This generating function does not give information  regarding neither the height of the internal vertices nor of the leaves. We obtain that by means of shift corolla operators.

\begin{defi}\normalfont 
 An operator of the form $\phi(x_{i+1})\partial_i$ is called a \emph{shift corolla operator} of type $i+1$. We generalize the operators $\mathcal{D}_n$ and $\Delta_n$,  $$\mathcal{D}_n^{\phi}:\mathcal{R}[[x_0,x_1,\dots,x_{n-1}]]\rightarrow\mathcal{R}[[x_0,x_1,\dots,x_{n-1},x_n]],$$
\begin{eqnarray}\mathcal{D}_n^{\phi}&=&\sum_{k=0}^{n-1}\phi(x_{k+1})\partial_k\\\Delta_n^\phi&=&\mathcal{D}_n^{\phi}\mathcal{D}_{n-1}^{\phi}\dots \mathcal{D}_1^{\phi}.\end{eqnarray}
\end{defi}
\begin{prop}\normalfont  \label{tree}Let \begin{equation}\mathcal{T}_n^{\phi}(\x)=\Delta_n^\phi x_0.\end{equation}
The series $\mathcal{T}_n^{\phi}(\x)=\mathcal{T}_n^{\phi}(x_1, x_2,\dots,x_{n})$,  enumerates the $\phi$-enriched increasing trees with $n$ internal vertices by the height of their leaves; the coefficient $\mathcal {T}_n^\phi[\kk]$ counts the number of such trees having $k_i$ leaves of height $i$, $i=0,1,\dots,n-1.$
\end{prop}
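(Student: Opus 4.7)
The plan is to prove this by induction on $n$, interpreting each application of $\mathcal{D}_m^\phi = \sum_{k=0}^{m-1}\phi(x_{k+1})\partial_k$ combinatorially in the spirit of the corolla-operator picture developed in Section~\ref{Sec:chain}. The base case is transparent: $\Delta_0^\phi x_0 = x_0$ corresponds to the trivial configuration with a single vertex of color $0$ (no internal vertices yet), and $\mathcal{D}_1^\phi x_0 = \phi(x_1)$ enumerates the trees with a single internal vertex (the root, labeled $1$, at height $0$) equipped with a $\phi$-enriched set of leaves at height $1$.

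For the inductive step, I would assume that $\mathcal{T}_{n-1}^\phi(\x) = \Delta_{n-1}^\phi x_0$ enumerates $\phi$-enriched increasing trees whose internal vertices are labeled $\{1,\dots,n-1\}$, weighted by $\prod_{\ell} x_{\mathrm{ht}(\ell)}$ over the leaves $\ell$. The key step is to interpret each summand of $\mathcal{D}_n^\phi$ via the corolla rule (\ref{corolla-operator}): the operator $\partial_k$ marks a leaf of color $k$ as a ghost, and the factor $\phi(x_{k+1})$ attaches to this ghost a fresh $\phi$-enriched set of new vertices colored $k+1$. Reading the ghost as the newly added internal vertex labeled $n$ turns those new vertices into leaves of height $k+1$. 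Summing over $k$ ranges over every possible height at which the new internal vertex can be inserted, and applying this to each term of $\mathcal{T}_{n-1}^\phi$ enumerates every way of performing a single "tree growth" step from any tree of the previous stage.

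To close the argument I would exhibit the inverse bijection. Given a $\phi$-enriched increasing tree $T'$ on internal vertices $\{1,\dots,n\}$, the vertex labeled $n$ is distinguished; because $n$ is the maximum label and the tree is increasing, all its children are necessarily leaves, forming a $\phi$-structure. Removing this vertex and replacing it by a single leaf at height $\mathrm{ht}_{T'}(n)=k$ yields an increasing tree on $\{1,\dots,n-1\}$, together with a marked leaf of height $k$ and a $\phi$-structure on a disjoint fresh set. This is precisely the combinatorial datum produced by a single term $\phi(x_{k+1})\partial_k\,\mathcal{T}_{n-1}^\phi$, so the maps are mutually inverse and the weighted enumeration is correct.

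The main point requiring care is the forced labeling of the "newest" internal vertex as $n$: this is automatic, since during the induction each step introduces a strictly larger label than any used before, and in the inverse direction no vertex other than the one bearing the maximum label can have played that role. A small consistency remark is that no $x_0$ appears in $\mathcal{T}_n^\phi$ for $n\geq 1$, because after step $1$ the state contains no $x_0$ factors and subsequent terms $\phi(x_1)\partial_0$ annihilate it; this matches the fact that a non-trivial increasing tree has no leaves of height $0$, the root always being internal.
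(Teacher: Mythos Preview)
Your argument is correct and follows essentially the same route as the paper's own proof: both proceed by induction on $n$, interpreting each application of $\mathcal{D}_m^{\phi}$ as grafting a $\phi$-corolla onto a leaf of the appropriate color, with the ghost vertices recording the order of insertion so that the internal labels increase toward the leaves. Your write-up is in fact more detailed than the paper's, in that you spell out the inverse bijection (removing the internal vertex labeled $n$, whose children are necessarily leaves) and remark explicitly why the $\phi(x_1)\partial_0$ summand contributes nothing after the first step; the paper leaves these points implicit.
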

\begin{proof}
At a combinatorial level, $\mathcal{D}^{\phi}_1x_0=\phi(x_{1})\partial_0$ acts over $x_0$  by grafting a corolla of type $1$ in a singleton vertex of color zero, the ghost root taking the place of that vertex. The color $1$ of the leaves indicates their height. The configurations of  $\mathcal{D}^{\phi}_2\mathcal{D}^{\phi}_1 x_0$ are obtained by grafting a corolla of type $2$ over a corolla of type $1$, the ghost vertex taking the place of a vertex of color $1$. Again the color of leaves indicates their height. The ghost vertices are enumerated to keep track of the order in which the operators were applied. By applying the operator  $\mathcal{D}^{\phi}_3$ we graft a either a corolla of type $1$ or a corolla of type $2$ over the tree in the same way as before. The internal ghost vertices increase along any path from the root. The general result follows by induction. \end{proof}
We modify corolla operators in order to keep information about the height of internal vertices,  multiplying it by the parameter $q_i$, $\phi(x_{i+1})q_i\partial_i$. In that way we define the operators
\begin{eqnarray}\mathcal{D}_{n}^{\phi,q}&=&\sum_{k=0}^{n-1}\phi(x_{k+1})q_k\partial_k\\\Delta_n^{\phi,q}&=&\mathcal{D}_n^{\phi}\mathcal{D}_{n-1}^{\phi,q}\dots \mathcal{D}_1^{\phi,q}.
\end{eqnarray}

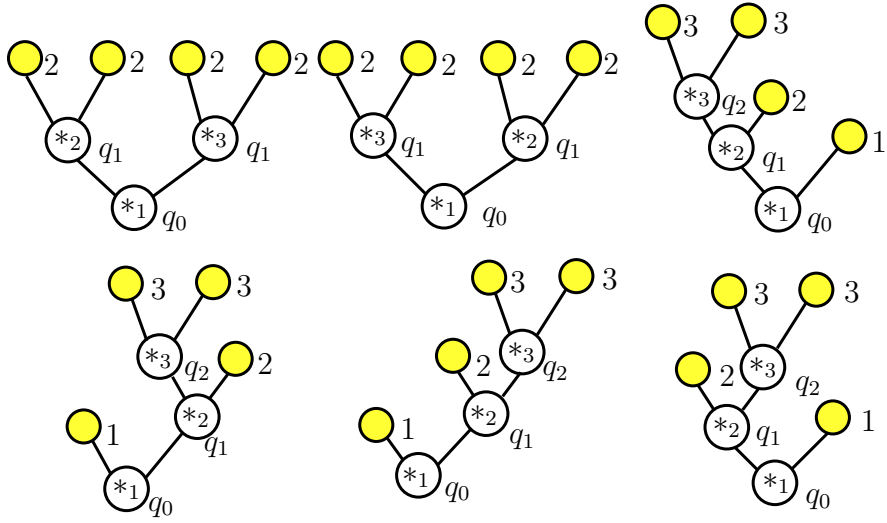
\begin{figure}\begin{center}
\scalebox{1} 
{
\begin{pspicture}(0,-3.4889061)(11.903281,3.4489062)
\definecolor{color11815b}{rgb}{1.0,1.0,0.2}
\psline[linewidth=0.04cm](10.46,0.80890626)(11.08,1.5489062)
\psline[linewidth=0.04cm](6.71,1.8089063)(6.53,2.5089064)
\psline[linewidth=0.04cm](5.03,1.9089063)(5.4,2.5289063)
\psline[linewidth=0.04cm](1.44,-2.9310937)(1.12,-2.3510938)
\psline[linewidth=0.04cm](5.29,-2.6710937)(5.03,-2.3110936)
\psline[linewidth=0.04cm](5.7,-2.7310936)(6.2,-2.1910937)
\psline[linewidth=0.04cm](9.98,-2.7710938)(9.64,-2.431094)
\psline[linewidth=0.04cm](9.78,-1.9710938)(10.02,-1.6510937)
\pscircle[linewidth=0.04,dimen=outer,fillstyle=solid](10.05,-1.4010937){0.31}
\pscircle[linewidth=0.04,dimen=outer,fillstyle=solid](9.57,-2.2010937){0.31}
\psline[linewidth=0.04cm](10.4,-2.7910938)(10.94,-2.2510939)
\pscircle[linewidth=0.04,dimen=outer,fillstyle=solid](10.21,-2.9410937){0.31}
\pscircle[linewidth=0.04,dimen=outer,fillstyle=solid](6.85,-1.2010938){0.31}
\pscircle[linewidth=0.04,dimen=outer,fillstyle=solid](6.37,-2.0210938){0.31}
\pscircle[linewidth=0.04,dimen=outer,fillstyle=solid](5.47,-2.8410938){0.31}
\pscircle[linewidth=0.04,dimen=outer,fillstyle=solid](2.03,-1.2610937){0.31}
\pscircle[linewidth=0.04,dimen=outer,fillstyle=solid](2.53,-2.0610938){0.31}
\psline[linewidth=0.04cm](1.78,-2.9510937)(2.34,-2.2710938)
\pscircle[linewidth=0.04,dimen=outer,fillstyle=solid](1.59,-3.0210938){0.31}
\pscircle[linewidth=0.04,dimen=outer,fillstyle=solid,fillcolor=color11815b](1.34,2.7089062){0.24}
\pscircle[linewidth=0.04,dimen=outer,fillstyle=solid,fillcolor=color11815b](2.4,2.7089062){0.24}
\pscircle[linewidth=0.04,dimen=outer,fillstyle=solid,fillcolor=color11815b](3.54,2.7089062){0.24}
\pscircle[linewidth=0.04,dimen=outer,fillstyle=solid,fillcolor=color11815b](0.24,2.7089062){0.24}
\psline[linewidth=0.04cm](1.5,0.82890624)(0.92,1.3689063)
\psline[linewidth=0.04cm](1.94,0.86890626)(2.64,1.3889062)
\psline[linewidth=0.04cm](0.26,2.4689062)(0.66,1.7489063)
\psline[linewidth=0.04cm](0.96,1.8889064)(1.3,2.4889061)
\psline[linewidth=0.04cm](2.58,1.8489063)(2.4,2.5089064)
\psline[linewidth=0.04cm](2.96,1.7689062)(3.44,2.5289063)
\pscircle[linewidth=0.04,dimen=outer,fillstyle=solid,fillcolor=color11815b](5.47,2.7089062){0.24}
\pscircle[linewidth=0.04,dimen=outer,fillstyle=solid,fillcolor=color11815b](6.53,2.7089062){0.24}
\pscircle[linewidth=0.04,dimen=outer,fillstyle=solid,fillcolor=color11815b](7.67,2.7089062){0.24}
\pscircle[linewidth=0.04,dimen=outer,fillstyle=solid,fillcolor=color11815b](4.37,2.7089062){0.24}
\psline[linewidth=0.04cm](5.64,0.82890624)(4.98,1.4889063)
\psline[linewidth=0.04cm](6.04,0.86890626)(6.77,1.3689063)
\psline[linewidth=0.04cm](4.39,2.4689062)(4.69,1.9089063)
\psline[linewidth=0.04cm](7.13,1.8289063)(7.59,2.4889061)
\usefont{T1}{ptm}{m}{n}
\rput(4.7928123,2.6589065){$2$}
\usefont{T1}{ptm}{m}{n}
\rput(5.8728123,2.6189063){$2$}
\usefont{T1}{ptm}{m}{n}
\rput(6.932812,2.6189063){$2$}
\usefont{T1}{ptm}{m}{n}
\rput(8.032812,2.6389062){$2$}
\pscircle[linewidth=0.04,dimen=outer,fillstyle=solid,fillcolor=color11815b](11.2,1.6689063){0.24}
\pscircle[linewidth=0.04,dimen=outer,fillstyle=solid,fillcolor=color11815b](10.16,2.1889064){0.24}
\pscircle[linewidth=0.04,dimen=outer,fillstyle=solid,fillcolor=color11815b](9.86,3.2089062){0.24}
\pscircle[linewidth=0.04,dimen=outer,fillstyle=solid,fillcolor=color11815b](8.72,3.1889064){0.24}
\psline[linewidth=0.04cm](8.98,2.4289064)(8.78,2.9889061)
\psline[linewidth=0.04cm](9.34,2.4289064)(9.72,3.0289063)
\psline[linewidth=0.04cm](10.08,0.9089062)(9.74,1.2889062)
\psline[linewidth=0.04cm](9.4,1.6689063)(9.24,1.9489063)
\psline[linewidth=0.04cm](9.82,1.6689063)(10.06,1.9889063)
\usefont{T1}{ptm}{m}{n}
\rput(5.4428124,-2.8610938){$\ast_1$}
\usefont{T1}{ptm}{m}{n}
\rput(6.342812,-2.0410938){$\ast_2$}
\usefont{T1}{ptm}{m}{n}
\rput(6.8428125,-1.2010938){$\ast_3$}
\pscircle[linewidth=0.04,dimen=outer,fillstyle=solid,fillcolor=color11815b](7.57,-0.19109368){0.24}
\pscircle[linewidth=0.04,dimen=outer,fillstyle=solid,fillcolor=color11815b](6.41,-0.2110937){0.24}
\psline[linewidth=0.04cm](6.69,-0.97109365)(6.49,-0.4110937)
\psline[linewidth=0.04cm](7.05,-0.97109365)(7.43,-0.3710937)
\pscircle[linewidth=0.04,dimen=outer,fillstyle=solid,fillcolor=color11815b](5.93,-1.2510937){0.24}
\psline[linewidth=0.04cm](6.21,-1.7910937)(5.99,-1.4510937)
\psline[linewidth=0.04cm](6.59,-1.7910937)(6.83,-1.4710937)
\pscircle[linewidth=0.04,dimen=outer,fillstyle=solid,fillcolor=color11815b](4.91,-2.1710935){0.24}
\usefont{T1}{ptm}{m}{n}
\rput(6.332813,-1.3810937){$2$}
\usefont{T1}{ptm}{m}{n}
\rput(5.332813,-2.221094){$1$}
\usefont{T1}{ptm}{m}{n}
\rput(6.7928123,-0.2610937){$3$}
\usefont{T1}{ptm}{m}{n}
\rput(8.012813,-0.22109368){$3$}
\usefont{T1}{ptm}{m}{n}
\rput(1.5928124,-3.0210936){$\ast_1$}
\usefont{T1}{ptm}{m}{n}
\rput(2.5128126,-2.0610936){$\ast_2$}
\usefont{T1}{ptm}{m}{n}
\rput(2.0128124,-1.2810936){$\ast_3$}
\pscircle[linewidth=0.04,dimen=outer,fillstyle=solid,fillcolor=color11815b](3.04,-1.2910937){0.24}
\pscircle[linewidth=0.04,dimen=outer,fillstyle=solid,fillcolor=color11815b](2.74,-0.2710937){0.24}
\pscircle[linewidth=0.04,dimen=outer,fillstyle=solid,fillcolor=color11815b](1.58,-0.2910937){0.24}
\psline[linewidth=0.04cm](1.86,-1.0510937)(1.66,-0.4910937)
\psline[linewidth=0.04cm](2.22,-1.0510937)(2.6,-0.45109364)
\psline[linewidth=0.04cm](2.36,-1.8110937)(2.2,-1.5310937)
\psline[linewidth=0.04cm](2.7,-1.8110937)(2.94,-1.4910938)
\pscircle[linewidth=0.04,dimen=outer,fillstyle=solid,fillcolor=color11815b](1.02,-2.191094){0.24}
\usefont{T1}{ptm}{m}{n}
\rput(10.232813,-2.9410937){$\ast_1$}
\usefont{T1}{ptm}{m}{n}
\rput(9.572812,-2.2010937){$\ast_2$}
\usefont{T1}{ptm}{m}{n}
\rput(10.052813,-1.4010936){$\ast_3$}
\pscircle[linewidth=0.04,dimen=outer,fillstyle=solid,fillcolor=color11815b](10.76,-0.3710937){0.24}
\pscircle[linewidth=0.04,dimen=outer,fillstyle=solid,fillcolor=color11815b](9.6,-0.3910937){0.24}
\psline[linewidth=0.04cm](9.88,-1.1510937)(9.68,-0.5910938)
\psline[linewidth=0.04cm](10.24,-1.1510937)(10.62,-0.5510937)
\pscircle[linewidth=0.04,dimen=outer,fillstyle=solid,fillcolor=color11815b](9.12,-1.4310937){0.24}
\psline[linewidth=0.04cm](9.4,-1.9710938)(9.18,-1.6310937)
\pscircle[linewidth=0.04,dimen=outer,fillstyle=solid,fillcolor=color11815b](10.98,-2.0710936){0.24}
\usefont{T1}{ptm}{m}{n}
\rput(0.5928125,2.6189063){$2$}
\usefont{T1}{ptm}{m}{n}
\rput(1.7328124,2.6589065){$2$}
\usefont{T1}{ptm}{m}{n}
\rput(2.7928123,2.6589065){$2$}
\usefont{T1}{ptm}{m}{n}
\rput(3.9128125,2.6389062){$2$}
\usefont{T1}{ptm}{m}{n}
\rput(10.532812,2.1389062){$2$}
\usefont{T1}{ptm}{m}{n}
\rput(3.4328125,-1.3610936){$2$}
\usefont{T1}{ptm}{m}{n}
\rput(9.572812,-1.5210936){$2$}
\usefont{T1}{ptm}{m}{n}
\rput(1.4128125,-2.3010938){$1$}
\usefont{T1}{ptm}{m}{n}
\rput(11.452812,-2.1210938){$1$}
\usefont{T1}{ptm}{m}{n}
\rput(11.592813,1.5989063){$1$}
\usefont{T1}{ptm}{m}{n}
\rput(9.092813,3.1389062){$3$}
\usefont{T1}{ptm}{m}{n}
\rput(10.292813,3.1789062){$3$}
\usefont{T1}{ptm}{m}{n}
\rput(2.0128124,-0.3610937){$3$}
\usefont{T1}{ptm}{m}{n}
\rput(3.1528125,-0.3210937){$3$}
\usefont{T1}{ptm}{m}{n}
\rput(10.032812,-0.4210937){$3$}
\usefont{T1}{ptm}{m}{n}
\rput(11.232812,-0.4010937){$3$}
\pscircle[linewidth=0.04,dimen=outer,fillstyle=solid](10.25,0.69890624){0.31}
\pscircle[linewidth=0.04,dimen=outer,fillstyle=solid](6.89,1.6389062){0.31}
\pscircle[linewidth=0.04,dimen=outer,fillstyle=solid](5.81,0.73890626){0.31}
\pscircle[linewidth=0.04,dimen=outer,fillstyle=solid](9.63,1.5189062){0.31}
\usefont{T1}{ptm}{m}{n}
\rput(10.232813,0.6789063){$\ast_1$}
\usefont{T1}{ptm}{m}{n}
\rput(9.6328125,1.4789064){$\ast_2$}
\pscircle[linewidth=0.04,dimen=outer,fillstyle=solid](0.81,1.6189063){0.31}
\pscircle[linewidth=0.04,dimen=outer,fillstyle=solid](2.77,1.6389062){0.31}
\pscircle[linewidth=0.04,dimen=outer,fillstyle=solid](1.69,0.7189062){0.31}
\pscircle[linewidth=0.04,dimen=outer,fillstyle=solid](4.87,1.6789062){0.31}
\pscircle[linewidth=0.04,dimen=outer,fillstyle=solid](9.17,2.1989062){0.31}
\usefont{T1}{ptm}{m}{n}
\rput(0.8128125,1.6189063){$\ast_2$}
\usefont{T1}{ptm}{m}{n}
\rput(2.7528126,1.6589062){$\ast_3$}
\usefont{T1}{ptm}{m}{n}
\rput(1.6928126,0.71890634){$\ast_1$}
\usefont{T1}{ptm}{m}{n}
\rput(4.8628125,1.6789062){$\ast_3$}
\usefont{T1}{ptm}{m}{n}
\rput(5.8028126,0.7389063){$\ast_1$}
\usefont{T1}{ptm}{m}{n}
\rput(9.172812,2.1989064){$\ast_3$}
\usefont{T1}{ptm}{m}{n}
\rput(6.8828125,1.6389062){$\ast_2$}
\usefont{T1}{ptm}{m}{n}
\rput(10.791407,-3.1610937){$q_0$}
\usefont{T1}{ptm}{m}{n}
\rput(10.131406,-2.3210938){$q_1$}
\usefont{T1}{ptm}{m}{n}
\rput(10.651406,-1.6610937){$q_2$}
\usefont{T1}{ptm}{m}{n}
\rput(5.9914064,-3.0610938){$q_0$}
\usefont{T1}{ptm}{m}{n}
\rput(6.851406,-2.3610938){$q_1$}
\usefont{T1}{ptm}{m}{n}
\rput(7.291406,-1.4610938){$q_2$}
\usefont{T1}{ptm}{m}{n}
\rput(2.0514061,-3.2610939){$q_0$}
\usefont{T1}{ptm}{m}{n}
\rput(2.7914062,-2.4610937){$q_1$}
\usefont{T1}{ptm}{m}{n}
\rput(2.5314062,-1.4610938){$q_2$}
\usefont{T1}{ptm}{m}{n}
\rput(2.2314062,0.5389063){$q_0$}
\usefont{T1}{ptm}{m}{n}
\rput(1.3914063,1.4389062){$q_1$}
\usefont{T1}{ptm}{m}{n}
\rput(3.3514063,1.4589063){$q_1$}
\usefont{T1}{ptm}{m}{n}
\rput(6.4914064,0.5989063){$q_0$}
\usefont{T1}{ptm}{m}{n}
\rput(5.411406,1.5189062){$q_1$}
\usefont{T1}{ptm}{m}{n}
\rput(7.4714065,1.4989063){$q_1$}
\usefont{T1}{ptm}{m}{n}
\rput(10.811406,0.51890624){$q_0$}
\usefont{T1}{ptm}{m}{n}
\rput(10.211407,1.3189063){$q_1$}
\usefont{T1}{ptm}{m}{n}
\rput(9.671406,2.0789063){$q_2$}
\end{pspicture} }\end{center}\caption{Extended binary trees enumerated by the polynomial
$\mathcal{T}_{3}^{[2]}(x_1,x_2,x_3;q_1,q_2)=2x_2^4q_0q_1^2+4x_3^2 x_2 x_1q_0q_1q_2$.}\label{r-Bell}
\end{figure}

In a similar way as  in the proof of Proposition \ref{tree}  we get
\begin{prop}\normalfont  \label{tree1}Let \begin{equation}\mathcal{T}_n^{\phi}(\x,\mathbf{q})=\Delta_n^{\phi, q} x_0.\end{equation}
The formal power series $\mathcal{T}_n^{\phi}(\x,\mathbf{q})=\mathcal{T}_n^{\phi}(x_1, x_2,\dots,x_{n};q_0,q_1,\dots,q_{n-1})$,  enumerates the $\phi$-enriched increasing trees with $n$ internal vertices by the height of their leaves and internal vertices. The coefficient $$\mathcal {T}_n^{\phi}[V,\kappa](\mathbf{q}):=\mathcal {T}_n^{\phi}[V,\kappa](q_0,q_1,\dots,q_n),$$ a polynomial in $q_0,q_1,\dots,q_n$, counts the number of such trees having  leaves with labels in $V$, each leaf $v$ having height  $\kappa(v)$,  classified according with the height of their internal vertices. The coefficient of $q_0q_1^{j_1}\dots q_{n-1}^{j_{n-1}}$ of $\mathcal {T}_n^{\phi}[V,\kappa](q_0,q_1,\dots,q_n)$ gives us the number of those trees having  $j_r$ internal vertices of height $r$, $r=1,2,\dots,n-1$. 
\end{prop}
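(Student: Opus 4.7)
The plan is to adapt the proof of Proposition \ref{tree} essentially verbatim, using induction on $n$, and simply to verify that the additional scalar factors $q_k$ appearing in the new operator $\mathcal{D}_k^{\phi,q} = \sum_{k=0}^{n-1}\phi(x_{k+1})q_k\partial_k$ correctly register the heights of the internal vertices created at each stage. Since Proposition \ref{tree} already establishes the bijection between monomials of $\Delta_n^{\phi}x_0$ and $\phi$-enriched increasing trees with $n$ internal vertices, classified by leaf height, the bulk of the work is bookkeeping for the new $q$-parameters.

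The base case $n=1$ reads $\mathcal{D}_1^{\phi,q}x_0 = q_0\phi(x_1)$. The singleton vertex of color $0$ becomes the root (the unique internal vertex, at height $0$, correctly weighted by $q_0$), and $\phi(x_1)$ enumerates a $\phi$-enriched corolla whose leaves are of color (height) $1$. For the inductive step, assume $\Delta_{n-1}^{\phi,q}x_0$ enumerates $\phi$-enriched increasing trees with $n-1$ internal vertices, weighted by the heights of their leaves and internal vertices as stated. Applying the summand $\phi(x_{k+1})q_k\partial_k$ of $\mathcal{D}_n^{\phi,q}$ to any such tree: the operator $\partial_k$ selects, by the Leibniz rule, a leaf of color $k$ (a leaf currently at height $k$); it is replaced by a ghost vertex, on top of which a $\phi$-enriched corolla with leaves of color $k+1$ is grafted. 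The ghost becomes the $n$-th internal vertex, assigned the largest label and thereby preserving the increasing structure; its height is $k$, which is precisely recorded by the extra factor $q_k$; and the new leaves correctly pick up color $k+1$, consistent with their height in the enlarged tree.

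Summing over $k$ sweeps through every possible height at which a new internal vertex may be inserted, and summing the Leibniz contributions sweeps through every leaf of that height, so $\Delta_n^{\phi,q}x_0$ is the inventory of $\phi$-enriched increasing trees with $n$ internal vertices, where each leaf $v$ contributes $x_{\mathrm{ht}(v)}$ and each internal vertex $u$ contributes $q_{\mathrm{ht}(u)}$. Extracting the coefficient of $q_0 q_1^{j_1}\cdots q_{n-1}^{j_{n-1}}$ in $\mathcal{T}_n^{\phi}[V,\kappa](\mathbf{q})$ then yields exactly the trees with the specified leaf placements and $j_r$ internal vertices at height $r$.

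The main obstacle, as in Proposition \ref{tree}, is notational rather than conceptual: one must cleanly separate the \emph{color} of a leaf (the current height, matched by $\partial_k$) from the \emph{type} of the incoming corolla operator (which imposes the color $k+1$ on the newly-created leaves), and keep track of the fact that each $q_k$ factor is paired with precisely one newly-created internal vertex and is never re-used in a later stage, since subsequent applications of $\mathcal{D}_m^{\phi,q}$ for $m>n$ operate on different (ghost) vertices. Once this correspondence is set up as in the proof of Proposition \ref{tree}, the height statistics for internal vertices follow automatically from the chosen summand at each inductive step.
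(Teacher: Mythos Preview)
Your proposal is correct and takes essentially the same approach as the paper, which itself gives no detailed argument for this proposition beyond the remark ``In a similar way as in the proof of Proposition~\ref{tree} we get\ldots''. Your induction on $n$, with the grafting interpretation of each shift corolla operator and the observation that the extra scalar $q_k$ in $\phi(x_{k+1})q_k\partial_k$ tags the newly created internal (ghost) vertex with its height $k$, is exactly the intended adaptation; indeed your write-up is more explicit than what the paper provides.
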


 From that we get the recursive formula
\begin{equation}
\mathcal{T}_n^{\phi}(\x,\mathbf{q})=\sum_{i=0}^{n-1}\phi(x_{n+1})q_i\partial_i \mathcal{T}_{n-1}^{\phi}(\x,\mathbf{q}).
\end{equation}
Le us denote by $B_{n,k}^{\phi}(\xx,\mathbf{q})$ the generating function of forests having  exactly $k$ increasing $\phi$-enriched trees as above, with $n$ internal vertices in total. The coefficient $B_{n,k}^{\phi}[V,\kappa](\mathbf{q})$ is a polynomial in $\mathbf{q}=(q_0,q_1,\dots,q_{n-1})$, whose coefficient $q_0^{k}q_1^{j_1}\dots q_{n-1}^{j_{n-1}}$ counts the number of such forest with $j_r$ internal vertices of height $r$, $r=0, 1,2,\dots,n-1.$ Again, the elements in the colored set $(V,\kappa)$ are labels for the leaves whose colors represent their height.

We have the following generalization of Theorem \ref{stefa}.
\begin{theo} \label{treechain}Let $F(x_0)$ be a series as in Theorem \ref{stefa}.
Then, the series $\Delta_n^{\phi,q} F(\xx;\mathbf{q})$ satisfy
the identity
\begin{equation}\label{gchain}
(\Delta_{n}^{\phi,q} F)(x_0,\dots, x_n; q_0,\dots, q_{n-1})=\sum_{k=1}^n
F^{(k)}(x_0)B_{n, k}^{\phi}(x_1,\dots, x_n; q_0,\dots, q_{n-1}).
\end{equation}
\end{theo}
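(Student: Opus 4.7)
The plan is to mimic the combinatorial argument used for Theorem \ref{stefa}, replacing the dart operators by shift corolla operators and keeping a careful count of heights via the $q_i$. At each step, the operator $\phi(x_{k+1})q_k\partial_k$ removes a vertex of color $k$ from an existing configuration, replaces it by a ghost vertex (serving as a new internal vertex at height $k$), grafts onto it a $\phi$-enriched corolla whose leaves have color $k+1$, and multiplies the weight by $q_k$. So $\Delta_n^{\phi,q}F$ is to be read as the result of applying $n$ such grafting steps in increasing order of the indices $1,2,\dots,n$.

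I would first check the base case $n=1$: $\mathcal{D}_1^{\phi,q}F = \phi(x_1)q_0\partial_0 F$ produces an $F$-structure with one distinguished former color-$0$ site carrying a single shift corolla of type $1$ at height $0$, which is precisely $F^{(1)}(x_0)\cdot B_{1,1}^\phi(x_1;q_0)$. For the inductive step, I would apply $\mathcal{D}_{n}^{\phi,q}$ to a generic configuration of $\Delta_{n-1}^{\phi,q}F$. Each summand $\phi(x_{k+1})q_k\partial_k$ has two effects depending on which color-$k$ vertex it picks: if it picks a remaining color-$0$ vertex of the original $F$-structure, it \emph{starts a new tree} rooted at height $0$ (with weight $q_0$); if it picks a leaf of color $k\ge 1$ inside one of the already-built trees, it \emph{extends} that tree by turning that leaf into an internal vertex of height $k$ and attaching a new $\phi$-corolla of type $k+1$ (with weight $q_k$). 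Labelling the ghost vertices by the step index at which they were introduced makes every internal vertex of every resulting tree increasing along paths from the root, exactly as in Proposition \ref{tree1}.

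Summing over all $n$-step histories, each configuration of $\Delta_n^{\phi,q}F$ decomposes canonically into two independent pieces: (i) an $F$-structure on the leftover color-$0$ points of the original $F$-configuration together with $k$ marked slots that were chosen to host the tree roots, and (ii) a forest of $k$ increasing $\phi$-enriched trees planted at those slots, with $n$ internal vertices in total. Piece (i) is by definition a configuration enumerated by $F^{(k)}(x_0)$ (adding $k$ ghost points), while piece (ii) is precisely enumerated by $B_{n,k}^\phi(x_1,\dots,x_n;q_0,\dots,q_{n-1})$: the $q$-weight records that the roots sit at height $0$ (factor $q_0^k$) and each internal vertex at height $r$ contributes $q_r$, while the colored leaves record their heights. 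The identity (\ref{gchain}) follows by summing over $k=1,\dots,n$.

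The main obstacle is the bookkeeping: one must verify that the decomposition (i)+(ii) is bijective, that is, that every pair consisting of an $F^{(k)}$-configuration and a $k$-forest of increasing $\phi$-enriched trees is realised by exactly one ordered sequence of graftings. The natural way is an induction on $n$, combined with the Leibniz-type observation that $\mathcal{D}_n^{\phi,q}$ either acts on the $F^{(k)}$-part by marking a new root (increasing $k\mapsto k+1$ and producing a one-vertex tree) or on the forest part by extending one existing tree at one of its leaves; in either case the resulting generating function identity matches the expected recursion for the right-hand side of (\ref{gchain}), which closes the induction.
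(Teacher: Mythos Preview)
Your argument is correct and follows exactly the approach intended by the paper, which simply states that the proof is ``completely similar to the proof of Theorem~\ref{stefa}''; you have spelled out the analogous tower-to-tree replacement (darts become shift corollas, towers become increasing $\phi$-enriched trees) together with the $q$-bookkeeping, which is precisely what that one-line proof is asking the reader to do. If anything, your write-up is more detailed than the paper's own treatment.
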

\begin{proof} Completely similar to the proof of Theorem
\ref{stefa}.\end{proof}
The generating series $$Y_{n}^{\phi}(\xx,\mathbf{q})=\sum_{k=1}^nB_{n, k}^{\phi}(\xx; \mathbf{q})$$ counts forests of trees as above, with any number of trees from $1$ to $n$.

For $F(x_0)=e^{x_0}$, from Theorem \ref{treechain} we obtain the Rodrigues-like formula for $Y_n^{\phi}(\xx,\mathbf{q}).$
\begin{equation}\label{Bellgeneral}
Y_{n}^{\phi}(\xx,\mathbf{q})=e^{-x_0}(\Delta_{n}^{\phi,q}e^{x_0})(\xx,\mathbf{q}).
\end{equation} 
The following recursive formula can be obtained from
equation (\ref{Bellgeneral})  as it was done in \cite{stefa1} for
the ordinary Bell polynomials.
\begin{eqnarray}\nonumber Y_{1}^{\phi}(x_1, q_0)&=&\phi(x_1)q_0,\\
Y_{n}^{\phi}(\xx,\mathbf{q})&=&\phi(x_1)q_0Y_{n-1}^{\phi}(\xx,\mathbf{q})+\sum_{i=1}^{n-1}
\phi(x_{i+1})q_i\partial_{x_{i}}Y_{n-1}^{\phi}(\xx,\mathbf{q}). \end{eqnarray}

 However, a straightforward combinatorial proof can be given to the above recursive formula
 and to the following one, by using the combinatorial interpretations of $Y^{\phi}_n(\xx,\mathbf{q})$ and
 of $B_{n,k}^{\phi}(\xx,\mathbf{q})$ in terms of forests of increasing trees.
\begin{eqnarray}\nonumber B^{\phi}_{1,1}(x_1;q_0)&=&\phi(x_1)q_0,\\
B_{n,k}^{\phi}(\xx,\mathbf{q})&=&\phi(x_1)q_0
B_{n-1,k-1}^{\phi}(\xx,\mathbf{q})+\sum_{i=1}^{n-1}\phi(x_{i+1})
q_i\partial_{x_i}B_{n-1,k}^{\phi}(\xx,\mathbf{q}).
\end{eqnarray}
\begin{rem}\normalfont  \label{rem:inexpath}\begin{enumerate}
\item If $\phi[0]\neq 0$ the specializations $x_n\leftarrow 0$ and $q_n\leftarrow q^n$ in $\mathcal{T}_n^{\phi}(\xx,\mathbf{q})$ give us the polynomials $p_n^{\phi}(q)$ of Section \ref{sec:pathlength}.  
\item If $\phi[0]=0$, the $\phi$-enriched trees are called {\em extended}. If in addition $\phi(x)$ enumerates structures  without  symmetries except the identity (rigid structures), the substitution $x_n\leftarrow x^n$ and $q_n\leftarrow q^n$  gives us the series  $p_n^{\phi}(x,q)$, which is the ordinary generating function of increasing extended trees with unlabelled leaves, according to their internal and external pathlengths (sum of the heights of leaves \cite{Knut}).\end{enumerate}
\end{rem}

\begin{ex}\normalfont  Extended $r$-ary  plane trees.\\ Let $r$ be a positive integer. We consider here rooted trees where each internal node has exactly $r$ ordered children ($r$-ary plane trees).   In this case $\phi(x)=x^r$ and we are in the case 2 of Remark \ref{rem:inexpath}. We use the notation 
 $B_{n,k}^{[r]}(\xx)$ and $Y_{n}^{[r]}(\xx)$. The coefficient of $\xx^{\nn}/\nn!$ in $B_{n,k}^{[r]}(\xx)$ counts the number
forests with exactly $k$ trees, having $n$ internal vertices and $n_i$ leaves of depth
$i$. Since there are $n_i!$ ways of permuting the labels in the leaves of depth $i$,
obtaining a different labelled forest each time, the coefficient of  $\xx^n$ in
$B_{n,k}^{[r]}(\xx)$ counts the same kind of forests, but with unlabelled leaves (see
figure \ref{r-Bell}, and Table \ref{rarybell}). A similar interpretation is given to $Y_n^{[r]}(\xx).$
\end{ex}

\begin{table}[h]\begin{center}\begin{tabular}{|l|l|}\hline$B_{5,1}^{[2]}(\xx)$&$40\,{x_{2}}^{2}\,{x_{3}}^{4} +
32\,{x_{2}}^{3}\,{x_{4}}^{2}\,{x_{ 3}} + 24\,{x_{3}}^{3}\,{x_{4}}^{2}\,{x_{1}} +
8\,{x_{4}}^{4}\,{x _{2}}\,{x_{1}} +$\\
$\mbox{ }$&$16\,{x_{5}}^{2}\,{x_{4}}\,{x_{3}}\,{x_{2}}\,{x_{ 1}}$\\\hline
 $B_{5,2}^{[2]}(\xx)$&$20\,{x_{3}}^{4}\,{x_{1}}^{3} +
 40\,{x_{4}}^{2}\,{x_{3} }\,{x_{2}}\,{x_{1}}^{3} +
40\,{x_{2}}^{6}\,{x_{1}} + 140\,{x_{2}} ^{3}\,{x_{3}}^{2}\,{x_{1}}^{2}$\\\hline
$B_{5,3}^{[2]}(\xx)$&$80\, {x_{1}}^{4}\,{x_{2}}^{4} +
40\,{x_{1}}^{5}\,{x_{3}}^{2}\,{x_{2}}$
 \\\hline
$B_{5,4}^{[2]}(\xx)$& $20\,{x_{1}}^{7}\,{x_{2}}^{2}$\\\hline
$B_{5,5}^{[2]}(\xx)$&${x_{1}}^{10}$\\\hline\end{tabular}
\end{center}\label{rarybell}
\caption{Generalized partial Bell polynomials for $n=5$ and and $r=2$.}
\end{table}

\section{Enumeration of Cayley's trees}\label{chen}
For a (non rooted) tree $\mathfrak{a}$ on the set of vertices $[n]$ define its weight as follows $$w(\mathfrak{a})=\prod_{i=1}^{n}x_{d_{\mathfrak{a}}(i)},$$ where $d_{\mathfrak{a}}(i)$ is the degree in $\mathfrak{a}$ of the vertex $i$. The weight of a rooted tree $T$ is then defined as $$w(T)=\prod_{i=1}^n x_{\mathrm{d}_T(i)}.$$ Denote by $\mathfrak{A}_n$  (resp. $\mathcal{A}_n$) the set of trees (resp. rooted trees) on $n$ vertices. Recall that for rooted trees, $d_T(i)$ denotes the in-degree of vertex $i$, the edges oriented towards the root.

Define the generating functions $\mathfrak{A}_n(\x)=\sum_{\mathfrak{a}\in\mathfrak{A}_n}w(\mathfrak{a})$ and $\mathcal{A}_n(\x)=\sum_{T\in \mathcal{A}_n}w(T)$.

\begin{prop}\normalfont  \label{Prop:cayley}
	
	We have the following identities
	\begin{eqnarray}
	\mathcal{A}_n(x_0,x_1,\dots,x_{n-1})&=&\Delta_{n-1}x_0^n\label{Eq:cayley1}\\
	\mathfrak{A}_n(x_1,x_2,\dots,x_{n-1})&=&\Delta_{n-2} x_1^n\label{Eq:cayley2}.
	\end{eqnarray}
\end{prop}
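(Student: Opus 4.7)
I would prove both identities by expressing each side as a sum of multinomial coefficients indexed by the shared degree/preimage-size statistic, and matching term by term. By Corollary \ref{Cor:functions}, grouping the functions $f:[n-1]\to[n]$ that arise in the combinatorial interpretation of $\Delta_{n-1}x_0^n$ by their preimage-size profile $\mathbf{d}=(d_1,\dots,d_n)$ gives
\begin{equation*}
\Delta_{n-1} x_0^n \;=\; \sum_{\substack{\mathbf{d},\ d_i\geq 0 \\ d_1+\cdots+d_n = n-1}} \binom{n-1}{d_1,\dots,d_n}\, x_{d_1}x_{d_2}\cdots x_{d_n}.
\end{equation*}
On the other side $\mathcal{A}_n(\mathbf{x})=\sum_T\prod_i x_{d_T(i)}=\sum_{\mathbf{d}} N^{\mathrm{rt}}_{\mathbf{d}}\prod_i x_{d_i}$, so \eqref{Eq:cayley1} reduces to showing the refined Cayley identity $N^{\mathrm{rt}}_{\mathbf{d}}=\binom{n-1}{d_1,\dots,d_n}$ for the number of rooted trees with prescribed children-counts.

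For that refined count, I would invoke the classical Pr\"ufer bijection, which enumerates unrooted labeled trees on $[n]$ with degree sequence $(D_1,\dots,D_n)$ by $\binom{n-2}{D_1-1,\dots,D_n-1}$. Rooting at $r$ makes $\mathrm{deg}(r)=d_r$ while $\mathrm{deg}(v)=d_v+1$ for $v\neq r$, so
\begin{equation*}
N^{\mathrm{rt}}_{\mathbf{d}} \;=\; \sum_{r=1}^n \binom{n-2}{d_1,\dots,d_r-1,\dots,d_n} \;=\; \frac{(n-2)!}{d_1!\cdots d_n!}\sum_{r=1}^n d_r \;=\; \binom{n-1}{d_1,\dots,d_n},
\end{equation*}
where the last step uses $\sum_r d_r=n-1$. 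This establishes \eqref{Eq:cayley1}.

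For \eqref{Eq:cayley2}, the same argument applies provided one reads $\Delta_{n-2}x_1^n$ with the dart operators reindexed so that the initial color is $1$ rather than $0$ (otherwise $\partial_0 x_1^n=0$ collapses the expression). The shifted version of Corollary \ref{Cor:functions} then gives $\Delta_{n-2}x_1^n=\sum_{f:[n-2]\to[n]}\prod_i x_{|f^{-1}(i)|+1}$, and regrouping by $D_i:=|f^{-1}(i)|+1\geq 1$ yields
\begin{equation*}
\Delta_{n-2} x_1^n \;=\; \sum_{\substack{\mathbf{D},\ D_i\geq 1 \\ D_1+\cdots+D_n = 2(n-1)}} \binom{n-2}{D_1-1,\dots,D_n-1}\, x_{D_1}\cdots x_{D_n},
\end{equation*}
which equals $\mathfrak{A}_n(\mathbf{x})$ by the Pr\"ufer formula directly.

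The main obstacle I anticipate is cleanly handling the shifted-index convention in the second identity: it must be pinned down precisely so that the combinatorial content of Corollary \ref{Cor:functions} transfers verbatim to the setting starting at color $1$. Alternatively, one can derive \eqref{Eq:cayley2} from \eqref{Eq:cayley1} by summing the rooted generating function over all choices of root and re-identifying the resulting weight $x_{d_r}\prod_{v\neq r}x_{d_v+1}$ with $\prod_v x_{\mathrm{deg}(v)}$, divided by $n$; but this requires a small reindexing argument that matches the operator formalism on the two sides. Once this bookkeeping is fixed, both identities become elementary multinomial identities whose only nontrivial input is the classical Pr\"ufer encoding.
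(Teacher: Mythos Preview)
Your argument is correct, but it takes a genuinely different route from the paper's own proof. You reduce \eqref{Eq:cayley1} to the refined Cayley count $N^{\mathrm{rt}}_{\mathbf d}=\binom{n-1}{d_1,\dots,d_n}$ and then invoke the Pr\"ufer bijection; this is essentially Chen's original argument, which the paper cites but deliberately replaces. The paper instead proves \eqref{Eq:cayley1} by a Joyal-type bijection: it applies one more dart operator $\mathcal D_n$ to $\mathcal A_n(\mathbf x)$, interprets the resulting ``darted trees'' as vertebrates, and sets up a weight-preserving bijection with endofunctions of $[n]$, whose inventory is $\Delta_n x_0^n$ by Corollary~\ref{Cor:functions}; the identity then follows from $\mathcal D_n(\mathcal A_n-\Delta_{n-1}x_0^n)=0$. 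For \eqref{Eq:cayley2} the paper again argues operatorially, showing that one dart applied to $\mathfrak A_n$ reproduces the shifted rooted-tree generating function, and deduces the result from \eqref{Eq:cayley1}. Your Pr\"ufer approach is shorter and more self-contained if one is willing to import that classical bijection, while the paper's approach stays entirely inside its dart-operator formalism and dovetails with the one-parameter-group generalizations of Section~\ref{sec:onepgroup}.

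Your caution about the index shift in $\Delta_{n-2}x_1^n$ is well placed: taken literally with $\mathcal D_1=x_1\partial_0$ the expression vanishes, and the paper tacitly uses the shifted convention (writing $S\Delta_{n-1}x_0^n=\Delta_{n-1}x_1^n$), so you are right that this bookkeeping must be stated explicitly. Once it is, your argument for \eqref{Eq:cayley2} via the unrooted Pr\"ufer formula is the most direct possible.
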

\begin{proof}
	The  coefficient $\mathcal{A}_n[V,\kappa]$ is the number of pairs $(T,h)$, $T$ being a rooted tree on $\mathcal{A}_n$, $n=|V|$, and $h:[n]\rightarrow V$ a bijection assigning to each vertex of $T$ a label in $V$ whose color is equal to the in-degree of the vertex, $\kappa(h(i))=\mathrm{d}(i)$, $i=1,2, \dots,n.$ The structures enumerated by  $\mathcal{D}_n\mathcal{A}_n[V,\kappa]$ are colored labeled trees with a dart of type $(i,i+1)$ for some $i=0,1,\dots,n-1.$ Now we can use an argument similar to that in \cite{Joyal}  used to prove Cayley's theorem on the number of rooted trees. In a darted tree (Fig \ref{Fig:endo} (a)), replace the ghost vertex of color $i$ by the real vertex of color $i+1$, and mark it with an ingoing  arrow. Consider the subtree of $T$ whose root is that marked vertex. The color of marked vertex is equal to its in-degree plus one. The same property is satisfied by all the subtrees whose roots are on the path from the root of $T$ to the marked vertex.  Then we have a totally ordered set of such trees (Fig. \ref{Fig:endo} (b)). The total order can be replaced by a permutation, and from that we get an endofunction of the same weight, using the ingoing arrow to connect the roots of the trees in the same external cycle (Fig. \ref{Fig:endo} (c) and (d)). This construction is clearly reversible. By Corollary \ref{Cor:functions} the sum of the weights of the endofunctions is equal to $\Delta_n x_0^n$, and we have  $\mathcal{D}_n\mathcal{A}_n(\x)=\Delta_n x_0^n.$ Hence $\mathcal{D}_n(\mathcal{A}_n(\x)-\Delta_{n-1} x_0^n)=0$ and we get that $\mathcal{A}_n(\x)-\Delta_{n-1} x_0^n$ is a constant. It has to be zero, because both polynomials have zero constant term. This proves Eq. (\ref{Eq:cayley1}). To prove Eq. (\ref{Eq:cayley2}) observe that a dart operator acts in a tree by choosing a root and rising its degree by one. The rest of the vertices have colors that indicates their total degree, which is equal to their in-degree plus one. Then $(\mathcal{D}\mathfrak{A}_n)(x_1,x_2,\dots,x_n)$ is equal to the generating function of the rooted trees with its variables shifted by one, $S\mathcal{A}_n(x_0,x_1,\dots,x_{n-1})=\mathcal{A}(x_1,x_2,\dots,x_n)=S\Delta_{n-1}x_0^n=\Delta_{n-1}x_1^n$.   By a similar argument  as above $$\mathcal{D}_n\mathfrak{A}_n(x_1,x_2,\dots,x_n)= \Delta_{n-1}x_1^n\Rightarrow \mathfrak{A}(x_1,x_2,\dots,x_{n-1})=\Delta_{n-2}x_1^n.$$
\end{proof}
For example
$$\mathcal{A}(x_0,x_1,x_2,x_3,x_4)=\Delta_4x^5_0=5 x_4 x_0^4+60 x_2^2 x_0^3+80 x_1 x_3 x_0^3+360 x_1^2 x_2 x_0^2+120 x_1^4 x_0,$$ and 
$$\mathfrak{A}(x_1,x_2,x_3,x_4)=\Delta_{3}x_1^5=60 x_1^2x_2^3+60x_1^3x_2x_3+5x_1^ 4x_4.$$

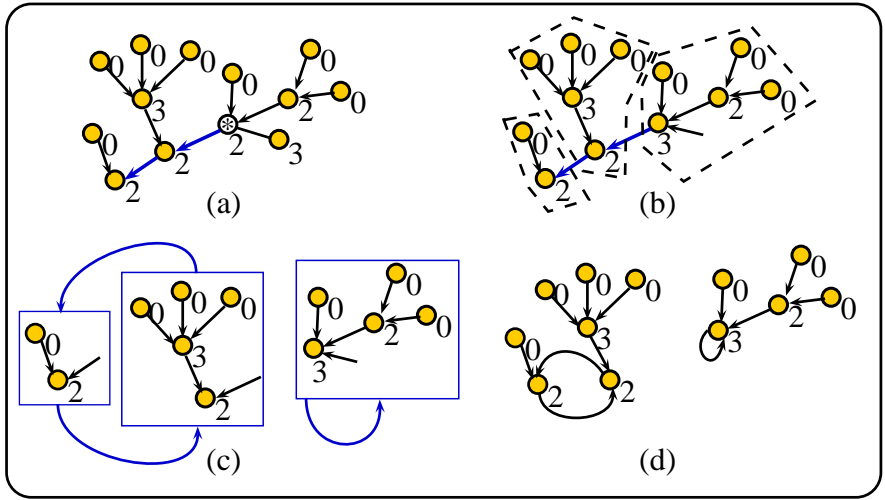
\begin{figure}
	\begin{center}
		\scalebox{1} 
		{
			\begin{pspicture}(0,-3.3)(11.66,3.3)
			\definecolor{color11247b}{rgb}{1.0,0.8,0.0}
			\definecolor{color11003}{rgb}{0.0,0.0,0.8}
			\psbezier[linewidth=0.036,arrowsize=0.05291667cm 2.0,arrowlength=1.4,arrowinset=0.4]{->}(7.1,-1.8690909)(7.1,-2.36)(8.08,-2.310909)(8.08,-1.82)
			\psline[linewidth=0.036cm,arrowsize=0.05291667cm 2.0,arrowlength=1.4,arrowinset=0.4]{->}(7.24,-0.58)(7.64,-1.02)
			\pscircle[linewidth=0.04,dimen=outer,fillstyle=solid,fillcolor=color11247b](7.74,-1.02){0.14}
			\usefont{T1}{ptm}{m}{n}
			\rput(7.967656,-1.17){3}
			\pscircle[linewidth=0.04,dimen=outer,fillstyle=solid,fillcolor=color11247b](8.04,-1.72){0.14}
			\usefont{T1}{ptm}{m}{n}
			\rput(8.278594,-1.87){2}
			\pscircle[linewidth=0.04,dimen=outer,fillstyle=solid,fillcolor=color11247b](7.72,-0.3){0.14}
			\usefont{T1}{ptm}{m}{n}
			\rput(7.9570312,-0.45){0}
			\pscircle[linewidth=0.04,dimen=outer,fillstyle=solid,fillcolor=color11247b](8.38,-0.38){0.14}
			\usefont{T1}{ptm}{m}{n}
			\rput(8.617031,-0.53){0}
			\psline[linewidth=0.036cm,arrowsize=0.05291667cm 2.0,arrowlength=1.4,arrowinset=0.4]{->}(8.3,-0.5)(7.84,-0.94)
			\psline[linewidth=0.036cm,arrowsize=0.05291667cm 2.0,arrowlength=1.4,arrowinset=0.4]{->}(7.74,-0.4)(7.74,-0.92)
			\psline[linewidth=0.036cm,arrowsize=0.05291667cm 2.0,arrowlength=1.4,arrowinset=0.4]{->}(7.78,-1.14)(8.04,-1.62)
			\pscircle[linewidth=0.04,dimen=outer,fillstyle=solid,fillcolor=color11247b](7.18,-0.52){0.14}
			\usefont{T1}{ptm}{m}{n}
			\rput(7.3970313,-0.59){0}
			\pscircle[linewidth=0.036,dimen=outer,fillstyle=solid,fillcolor=color11247b](7.08,-1.78){0.14}
			\usefont{T1}{ptm}{m}{n}
			\rput(7.318594,-1.93){2}
			\pscircle[linewidth=0.036,dimen=outer,fillstyle=solid,fillcolor=color11247b](6.78,-1.16){0.14}
			\usefont{T1}{ptm}{m}{n}
			\rput(7.017031,-1.31){0}
			\psline[linewidth=0.036cm,arrowsize=0.05291667cm 2.0,arrowlength=1.4,arrowinset=0.4]{->}(6.86,-1.26)(7.02,-1.72)
			\psbezier[linewidth=0.036,arrowsize=0.05291667cm 2.0,arrowlength=1.4,arrowinset=0.4]{->}(7.96,-1.6509091)(7.74,-1.3)(7.2323403,-1.2)(7.06,-1.7)
			\psline[linewidth=0.036cm,arrowsize=0.05291667cm 2.0,arrowlength=1.4,arrowinset=0.4]{->}(7.04,2.48)(7.44,2.04)
			\pscircle[linewidth=0.04,dimen=outer,fillstyle=solid,fillcolor=color11247b](8.68,1.7){0.14}
			\usefont{T1}{ptm}{m}{n}
			\rput(8.767656,1.43){3}
			\pscircle[linewidth=0.04,dimen=outer,fillstyle=solid,fillcolor=color11247b](7.54,2.04){0.14}
			\usefont{T1}{ptm}{m}{n}
			\rput(7.7676563,1.89){3}
			\pscircle[linewidth=0.04,dimen=outer,fillstyle=solid,fillcolor=color11247b](7.18,0.96){0.14}
			\usefont{T1}{ptm}{m}{n}
			\rput(7.418594,0.81){2}
			\pscircle[linewidth=0.04,dimen=outer,fillstyle=solid,fillcolor=color11247b](6.88,1.58){0.14}
			\usefont{T1}{ptm}{m}{n}
			\rput(7.117031,1.43){0}
			\pscircle[linewidth=0.04,dimen=outer,fillstyle=solid,fillcolor=color11247b](7.84,1.34){0.14}
			\usefont{T1}{ptm}{m}{n}
			\rput(8.078594,1.19){2}
			\pscircle[linewidth=0.04,dimen=outer,fillstyle=solid,fillcolor=color11247b](8.74,2.38){0.14}
			\usefont{T1}{ptm}{m}{n}
			\rput(8.977032,2.23){0}
			\pscircle[linewidth=0.04,dimen=outer,fillstyle=solid,fillcolor=color11247b](7.52,2.76){0.14}
			\usefont{T1}{ptm}{m}{n}
			\rput(7.7570314,2.61){0}
			\pscircle[linewidth=0.04,dimen=outer,fillstyle=solid,fillcolor=color11247b](9.78,2.7){0.14}
			\usefont{T1}{ptm}{m}{n}
			\rput(10.017032,2.55){0}
			\pscircle[linewidth=0.04,dimen=outer,fillstyle=solid,fillcolor=color11247b](8.18,2.68){0.14}
			\usefont{T1}{ptm}{m}{n}
			\rput(8.417031,2.53){0}
			\pscircle[linewidth=0.04,dimen=outer,fillstyle=solid,fillcolor=color11247b](10.18,2.14){0.14}
			\usefont{T1}{ptm}{m}{n}
			\rput(10.417031,1.99){0}
			\psline[linewidth=0.036cm,arrowsize=0.05291667cm 2.0,arrowlength=1.4,arrowinset=0.4]{->}(10.1,2.12)(9.62,2.06)
			\psline[linewidth=0.036cm,arrowsize=0.05291667cm 2.0,arrowlength=1.4,arrowinset=0.4]{->}(9.72,2.58)(9.58,2.16)
			\psline[linewidth=0.036cm,arrowsize=0.05291667cm 2.0,arrowlength=1.4,arrowinset=0.4]{->}(9.42,2.0)(8.8,1.72)
			\psline[linewidth=0.036cm,arrowsize=0.05291667cm 2.0,arrowlength=1.4,arrowinset=0.4]{->}(8.74,2.26)(8.72,1.8)
			\psline[linewidth=0.048cm,linecolor=color11003,arrowsize=0.05291667cm 2.0,arrowlength=1.4,arrowinset=0.4]{->}(8.58,1.62)(7.98,1.34)
			\psline[linewidth=0.036cm,arrowsize=0.05291667cm 2.0,arrowlength=1.4,arrowinset=0.4]{->}(8.1,2.56)(7.64,2.12)
			\psline[linewidth=0.036cm,arrowsize=0.05291667cm 2.0,arrowlength=1.4,arrowinset=0.4]{->}(7.54,2.66)(7.54,2.14)
			\psline[linewidth=0.036cm,arrowsize=0.05291667cm 2.0,arrowlength=1.4,arrowinset=0.4]{->}(7.58,1.9)(7.8,1.4)
			\psline[linewidth=0.036cm,arrowsize=0.05291667cm 2.0,arrowlength=1.4,arrowinset=0.4]{->}(6.96,1.48)(7.12,1.02)
			\psline[linewidth=0.048cm,linecolor=color11003,arrowsize=0.05291667cm 2.0,arrowlength=1.4,arrowinset=0.4]{->}(7.74,1.26)(7.3,0.96)
			\psline[linewidth=0.036cm,arrowsize=0.05291667cm 2.0,arrowlength=1.4,arrowinset=0.4]{->}(9.28,1.54)(8.78,1.66)
			\pscircle[linewidth=0.04,dimen=outer,fillstyle=solid,fillcolor=color11247b](6.98,2.54){0.14}
			\usefont{T1}{ptm}{m}{n}
			\rput(7.197031,2.47){0}
			\psline[linewidth=0.036,linestyle=dashed,dash=0.16cm 0.16cm](6.7,2.64)(7.56,3.1)(8.64,2.72)(8.26,1.94)(8.24,0.96)(7.68,1.06)(7.38,1.54)(6.7,2.68)
			\psline[linewidth=0.036,linestyle=dashed,dash=0.16cm 0.16cm](6.64,1.74)(7.08,1.82)(7.76,0.68)(7.22,0.52)(6.96,0.84)(6.62,1.76)
			\psline[linewidth=0.036,linestyle=dashed,dash=0.16cm 0.16cm](8.66,2.64)(8.46,2.18)(8.48,1.38)(9.02,0.92)(10.78,2.0)(9.92,2.98)(8.68,2.6)
			\pscircle[linewidth=0.04,dimen=outer,fillstyle=solid,fillcolor=color11247b](9.48,2.04){0.14}
			\usefont{T1}{ptm}{m}{n}
			\rput(9.718594,1.89){2}
			\usefont{T1}{ptm}{m}{n}
			\rput(8.675157,0.61){(b)}
			\pscircle[linewidth=0.04,dimen=outer,fillstyle=solid,fillcolor=color11247b](9.48,-1.06){0.14}
			\usefont{T1}{ptm}{m}{n}
			\rput(9.707656,-1.21){3}
			\pscircle[linewidth=0.04,dimen=outer,fillstyle=solid,fillcolor=color11247b](9.54,-0.38){0.14}
			\usefont{T1}{ptm}{m}{n}
			\rput(9.777031,-0.53){0}
			\pscircle[linewidth=0.04,dimen=outer,fillstyle=solid,fillcolor=color11247b](10.58,-0.06){0.14}
			\usefont{T1}{ptm}{m}{n}
			\rput(10.817031,-0.21){0}
			\pscircle[linewidth=0.04,dimen=outer,fillstyle=solid,fillcolor=color11247b](10.98,-0.62){0.14}
			\usefont{T1}{ptm}{m}{n}
			\rput(11.2170315,-0.77){0}
			\psline[linewidth=0.036cm,arrowsize=0.05291667cm 2.0,arrowlength=1.4,arrowinset=0.4]{->}(10.9,-0.64)(10.42,-0.7)
			\psline[linewidth=0.036cm,arrowsize=0.05291667cm 2.0,arrowlength=1.4,arrowinset=0.4]{->}(10.52,-0.18)(10.38,-0.6)
			\psline[linewidth=0.036cm,arrowsize=0.05291667cm 2.0,arrowlength=1.4,arrowinset=0.4]{->}(10.22,-0.76)(9.6,-1.04)
			\psline[linewidth=0.036cm,arrowsize=0.05291667cm 2.0,arrowlength=1.4,arrowinset=0.4]{->}(9.54,-0.5)(9.52,-0.96)
			\pscircle[linewidth=0.04,dimen=outer,fillstyle=solid,fillcolor=color11247b](10.28,-0.72){0.14}
			\usefont{T1}{ptm}{m}{n}
			\rput(10.518594,-0.87){2}
			\psbezier[linewidth=0.036,arrowsize=0.05291667cm 2.0,arrowlength=1.4,arrowinset=0.4]{->}(9.34,-1.06)(9.08,-1.3082956)(9.41913,-1.7)(9.56,-1.1610029)
			\usefont{T1}{ptm}{m}{n}
			\rput(8.675157,-2.81){(d)}
			\psline[linewidth=0.036cm,arrowsize=0.05291667cm 2.0,arrowlength=1.4,arrowinset=0.4]{->}(1.86,-0.82)(2.26,-1.26)
			\pscircle[linewidth=0.04,dimen=outer,fillstyle=solid,fillcolor=color11247b](2.36,-1.26){0.14}
			\usefont{T1}{ptm}{m}{n}
			\rput(2.5876563,-1.41){3}
			\pscircle[linewidth=0.04,dimen=outer,fillstyle=solid,fillcolor=color11247b](2.66,-1.96){0.14}
			\usefont{T1}{ptm}{m}{n}
			\rput(2.8985937,-2.11){2}
			\pscircle[linewidth=0.04,dimen=outer,fillstyle=solid,fillcolor=color11247b](2.34,-0.54){0.14}
			\usefont{T1}{ptm}{m}{n}
			\rput(2.5770311,-0.69){0}
			\pscircle[linewidth=0.04,dimen=outer,fillstyle=solid,fillcolor=color11247b](3.0,-0.62){0.14}
			\usefont{T1}{ptm}{m}{n}
			\rput(3.2370312,-0.77){0}
			\psline[linewidth=0.036cm,arrowsize=0.05291667cm 2.0,arrowlength=1.4,arrowinset=0.4]{->}(3.4,-1.68)(2.8,-1.96)
			\psline[linewidth=0.036cm,arrowsize=0.05291667cm 2.0,arrowlength=1.4,arrowinset=0.4]{->}(2.92,-0.74)(2.46,-1.18)
			\psline[linewidth=0.036cm,arrowsize=0.05291667cm 2.0,arrowlength=1.4,arrowinset=0.4]{->}(2.36,-0.64)(2.36,-1.16)
			\psline[linewidth=0.036cm,arrowsize=0.05291667cm 2.0,arrowlength=1.4,arrowinset=0.4]{->}(2.4,-1.4)(2.62,-1.9)
			\pscircle[linewidth=0.04,dimen=outer,fillstyle=solid,fillcolor=color11247b](1.8,-0.76){0.14}
			\usefont{T1}{ptm}{m}{n}
			\rput(2.0170312,-0.83){0}
			\pscircle[linewidth=0.04,dimen=outer,fillstyle=solid,fillcolor=color11247b](4.1,-1.3){0.14}
			\usefont{T1}{ptm}{m}{n}
			\rput(4.1876564,-1.61){3}
			\pscircle[linewidth=0.04,dimen=outer,fillstyle=solid,fillcolor=color11247b](4.16,-0.62){0.14}
			\usefont{T1}{ptm}{m}{n}
			\rput(4.3970313,-0.77){0}
			\pscircle[linewidth=0.04,dimen=outer,fillstyle=solid,fillcolor=color11247b](5.2,-0.3){0.14}
			\usefont{T1}{ptm}{m}{n}
			\rput(5.4370313,-0.45){0}
			\pscircle[linewidth=0.04,dimen=outer,fillstyle=solid,fillcolor=color11247b](5.6,-0.86){0.14}
			\usefont{T1}{ptm}{m}{n}
			\rput(5.8370314,-1.01){0}
			\psline[linewidth=0.036cm,arrowsize=0.05291667cm 2.0,arrowlength=1.4,arrowinset=0.4]{->}(5.52,-0.88)(5.04,-0.94)
			\psline[linewidth=0.036cm,arrowsize=0.05291667cm 2.0,arrowlength=1.4,arrowinset=0.4]{->}(5.14,-0.42)(5.0,-0.84)
			\psline[linewidth=0.036cm,arrowsize=0.05291667cm 2.0,arrowlength=1.4,arrowinset=0.4]{->}(4.84,-1.0)(4.22,-1.28)
			\psline[linewidth=0.036cm,arrowsize=0.05291667cm 2.0,arrowlength=1.4,arrowinset=0.4]{->}(4.16,-0.74)(4.14,-1.2)
			\psline[linewidth=0.036cm,arrowsize=0.05291667cm 2.0,arrowlength=1.4,arrowinset=0.4]{->}(4.68,-1.48)(4.2,-1.36)
			\pscircle[linewidth=0.04,dimen=outer,fillstyle=solid,fillcolor=color11247b](4.9,-0.96){0.14}
			\usefont{T1}{ptm}{m}{n}
			\rput(5.1385937,-1.11){2}
			\pscircle[linewidth=0.036,dimen=outer,fillstyle=solid,fillcolor=color11247b](0.7,-1.72){0.14}
			\usefont{T1}{ptm}{m}{n}
			\rput(0.93859375,-1.87){2}
			\pscircle[linewidth=0.036,dimen=outer,fillstyle=solid,fillcolor=color11247b](0.4,-1.1){0.14}
			\usefont{T1}{ptm}{m}{n}
			\rput(0.63703126,-1.25){0}
			\psline[linewidth=0.036cm,arrowsize=0.05291667cm 2.0,arrowlength=1.4,arrowinset=0.4]{->}(0.48,-1.2)(0.64,-1.66)
			\psline[linewidth=0.036cm,arrowsize=0.05291667cm 2.0,arrowlength=1.4,arrowinset=0.4]{->}(1.26,-1.42)(0.82,-1.72)
			\psframe[linewidth=0.018,linecolor=color11003,dimen=outer](1.4,-0.8)(0.18,-2.06)
			\psframe[linewidth=0.02,linecolor=color11003,dimen=outer](3.44,-0.28)(1.54,-2.34)
			\psframe[linewidth=0.02,linecolor=color11003,dimen=outer](6.04,-0.12)(3.86,-1.98)
			\psbezier[linewidth=0.036,linecolor=color11003,arrowsize=0.05291667cm 2.0,arrowlength=1.4,arrowinset=0.4]{->}(4.0,-1.96)(4.0,-2.76)(5.0,-2.76)(4.98,-1.98)
			\psbezier[linewidth=0.036,linecolor=color11003,arrowsize=0.05291667cm 2.0,arrowlength=1.4,arrowinset=0.4]{->}(0.7,-2.04)(0.7,-2.84)(2.56,-3.14)(2.56,-2.34)
			\psbezier[linewidth=0.036,linecolor=color11003,arrowsize=0.05291667cm 2.0,arrowlength=1.4,arrowinset=0.4]{<-}(0.74,-0.84)(0.74,-0.04)(2.54,0.5)(2.54,-0.3)
			\usefont{T1}{ptm}{m}{n}
			\rput(2.9051561,-2.81){(c)}
			\psline[linewidth=0.036cm,arrowsize=0.05291667cm 2.0,arrowlength=1.4,arrowinset=0.4]{->}(1.32,2.46)(1.72,2.02)
			\pscircle[linewidth=0.04,dimen=outer,fillstyle=solid](2.96,1.68){0.14}
			\usefont{T1}{ptm}{m}{n}
			\rput(3.0785937,1.41){2}
			\pscircle[linewidth=0.04,dimen=outer,fillstyle=solid,fillcolor=color11247b](1.82,2.02){0.14}
			\usefont{T1}{ptm}{m}{n}
			\rput(2.0476563,1.87){3}
			\pscircle[linewidth=0.04,dimen=outer,fillstyle=solid,fillcolor=color11247b](1.46,0.94){0.14}
			\usefont{T1}{ptm}{m}{n}
			\rput(1.6985937,0.79){2}
			\pscircle[linewidth=0.04,dimen=outer,fillstyle=solid,fillcolor=color11247b](1.16,1.56){0.14}
			\usefont{T1}{ptm}{m}{n}
			\rput(1.3970313,1.41){0}
			\pscircle[linewidth=0.04,dimen=outer,fillstyle=solid,fillcolor=color11247b](2.12,1.32){0.14}
			\usefont{T1}{ptm}{m}{n}
			\rput(2.3585937,1.17){2}
			\pscircle[linewidth=0.04,dimen=outer,fillstyle=solid,fillcolor=color11247b](3.02,2.36){0.14}
			\usefont{T1}{ptm}{m}{n}
			\rput(3.2570312,2.21){0}
			\pscircle[linewidth=0.04,dimen=outer,fillstyle=solid,fillcolor=color11247b](1.8,2.74){0.14}
			\usefont{T1}{ptm}{m}{n}
			\rput(2.0370312,2.59){0}
			\pscircle[linewidth=0.04,dimen=outer,fillstyle=solid,fillcolor=color11247b](4.06,2.68){0.14}
			\usefont{T1}{ptm}{m}{n}
			\rput(4.2970314,2.53){0}
			\pscircle[linewidth=0.04,dimen=outer,fillstyle=solid,fillcolor=color11247b](2.46,2.66){0.14}
			\usefont{T1}{ptm}{m}{n}
			\rput(2.6970313,2.51){0}
			\pscircle[linewidth=0.04,dimen=outer,fillstyle=solid,fillcolor=color11247b](4.46,2.12){0.14}
			\usefont{T1}{ptm}{m}{n}
			\rput(4.697031,1.97){0}
			\psline[linewidth=0.036cm,arrowsize=0.05291667cm 2.0,arrowlength=1.4,arrowinset=0.4]{->}(4.38,2.1)(3.9,2.04)
			\psline[linewidth=0.036cm,arrowsize=0.05291667cm 2.0,arrowlength=1.4,arrowinset=0.4]{->}(4.0,2.56)(3.86,2.14)
			\psline[linewidth=0.036cm,arrowsize=0.05291667cm 2.0,arrowlength=1.4,arrowinset=0.4]{->}(3.7,1.98)(3.08,1.7)
			\psline[linewidth=0.036cm,arrowsize=0.05291667cm 2.0,arrowlength=1.4,arrowinset=0.4]{->}(3.02,2.24)(3.0,1.78)
			\psline[linewidth=0.048cm,linecolor=color11003,arrowsize=0.05291667cm 2.0,arrowlength=1.4,arrowinset=0.4]{->}(2.86,1.6)(2.26,1.32)
			\psline[linewidth=0.036cm,arrowsize=0.05291667cm 2.0,arrowlength=1.4,arrowinset=0.4]{->}(2.38,2.54)(1.92,2.1)
			\psline[linewidth=0.036cm,arrowsize=0.05291667cm 2.0,arrowlength=1.4,arrowinset=0.4]{->}(1.82,2.64)(1.82,2.12)
			\psline[linewidth=0.036cm,arrowsize=0.05291667cm 2.0,arrowlength=1.4,arrowinset=0.4]{->}(1.86,1.88)(2.08,1.38)
			\psline[linewidth=0.036cm,arrowsize=0.05291667cm 2.0,arrowlength=1.4,arrowinset=0.4]{->}(1.24,1.46)(1.4,1.0)
			\psline[linewidth=0.048cm,linecolor=color11003,arrowsize=0.05291667cm 2.0,arrowlength=1.4,arrowinset=0.4]{->}(2.02,1.24)(1.58,0.94)
			\psline[linewidth=0.036cm](3.52,1.5)(3.06,1.64)
			\pscircle[linewidth=0.04,dimen=outer,fillstyle=solid,fillcolor=color11247b](1.26,2.52){0.14}
			\usefont{T1}{ptm}{m}{n}
			\rput(1.4770312,2.45){0}
			\pscircle[linewidth=0.04,dimen=outer,fillstyle=solid,fillcolor=color11247b](3.76,2.02){0.14}
			\usefont{T1}{ptm}{m}{n}
			\rput(3.9985938,1.87){2}
			\usefont{T1}{ptm}{m}{n}
			\rput(2.9051561,0.61){(a)}
			\pscircle[linewidth=0.04,dimen=outer,fillstyle=solid,fillcolor=color11247b](3.64,1.48){0.14}
			\usefont{T1}{ptm}{m}{n}
			\rput(3.8676562,1.29){3}
			\psdots[dotsize=0.16,dotstyle=asterisk](2.96,1.68)
			\psframe[linewidth=0.036,framearc=0.1,dimen=outer](11.66,3.3)(0.0,-3.3)
			\end{pspicture} }\end{center}\caption{The identity $\mathcal{D}\mathcal{A}_n(\x)=\Delta_nx_0^n$}\label{Fig:endo}
\end{figure}
Eq. (\ref{Eq:cayley1}) was first stated in \cite{Chen} in the language of grammars, and proved using Pr\"ufer codes (see also \cite{Dumont} for other formulas on similar enumeration problems).  Eq. (\ref{Eq:cayley2}) is, to the best of our knowledge, new. Observe that our proof of Eq. (\ref{Eq:cayley1}) relies on similar arguments as given by Joyal \cite{Joyal} in his classical proof of Cayley's formula for the number of trees (see also \cite{Labelle}). In this case, the polynomial obtained gives information about the degrees of the vertices of the rooted trees, and is equivalent, as it was pointed out by Chen \cite{Chen}, to the Lagrange inversion formula.   

We can give a visual proof of identity (\ref{chenfaa}) by considering the dart operators $x_{i+1}\partial_{x_i}$ and the corolla operators $x_1y_{i+1}\partial_{y_{i}}$,
\begin{equation}\label{Eq:chendartcoro}
\scalebox{1} 
{
	\begin{pspicture}(0,-0.648125)(11.562813,0.648125)
	\definecolor{color1218b}{rgb}{0.8,0.8,0.8}
	\definecolor{color1223b}{rgb}{0.4,0.4,0.4}
	\pscircle[linewidth=0.04,dimen=outer,fillstyle=solid,fillcolor=color1218b](3.8109374,-0.3403125){0.17}
	\psdots[dotsize=0.24,dotstyle=asterisk](3.8009374,-0.3403125)
	\psline[linewidth=0.02cm](3.8109374,0.2296875)(3.8109374,-0.1903125)
	\pscircle[linewidth=0.04,dimen=outer,fillstyle=solid,fillcolor=color1223b](3.8209374,0.2496875){0.14}
	\usefont{T1}{ptm}{m}{n}
	\rput(4.072344,-0.4203125){${}_i$}
	\usefont{T1}{ptm}{m}{n}
	\rput(4.302344,0.2596875){${}_{i+1}$}
	\pscircle[linewidth=0.04,dimen=outer,fillstyle=solid](9.9609375,-0.27450013){0.18}
	\psdots[dotsize=0.26,dotstyle=asterisk](9.9609375,-0.27450013)
	\psline[linewidth=0.02cm](10.220938,0.3896875)(10.020938,-0.1103125)
	\pscircle[linewidth=0.04,dimen=outer,fillstyle=solid](10.250937,0.4396875){0.13}
	\psline[linewidth=0.02cm](9.560938,0.1296875)(9.830937,-0.1703125)
	\pscircle[linewidth=0.04,dimen=outer,fillstyle=solid,fillcolor=color1223b](9.540937,0.1696875){0.14}
	\usefont{T1}{ptm}{m}{n}
	\rput(1.7923437,-0.1003125){$x_{i+1}\partial_{x_i}=$}
	\usefont{T1}{ptm}{m}{n}
	\rput(7.2723436,-0.1203125){$x_1y_{i+1}\partial_{y_i}=$}
	\usefont{T1}{ptm}{m}{n}
	\rput(10.662344,0.4596875){${}_{i+1}$}
	\usefont{T1}{ptm}{m}{n}
	\rput(9.782344,0.1396875){${}_1$}
	\usefont{T1}{ptm}{m}{n}
	\rput(10.232344,-0.3403125){${}_i$}
	\end{pspicture} 
}\end{equation}
The dark vertices are associated to the $x$'s and the white ones to the $y$'s.
The reader may check that the combinatorial structures obtained after applying the operator $\Gamma_n=\mathcal{F}_n \mathcal{F}_{n-1}\dots \mathcal{F}_1$ to $y_0$ (sum of dart and corolla operators as in Eq. (\ref{Eq:chendartcoro})) the resulting structures are increasing trees in two kinds of colors, as
in Fig. \ref{Fig:chentrees} (a). It is not difficult to see that the weight of them is equal to
\begin{equation}
\label{Faa}\sum_{\pi\in\Pi[n]}y_{|\pi|}\prod_{B\in\pi}x_{|B|},
\end{equation}
which is the coefficient $z_n=Q(P(t))[n]$ (Fig. \ref{Fig:chentrees} (b)). 
\begin{figure}
	\begin{center}
		\scalebox{1} {
			\begin{pspicture}(0,-2.46875)(10.880938,2.46875)
			\definecolor{color7656b}{rgb}{0.8,0.8,0.8}
			\definecolor{color7673b}{rgb}{0.4,0.4,0.4}
			\psline[linewidth=0.036cm](6.767686,0.15716977)(6.783264,-0.70640266)
			\psline[linewidth=0.036cm](5.773916,-0.67782825)(6.514944,-0.9173635)
			\psline[linewidth=0.036cm](6.200112,-0.16757642)(6.630879,-0.7837255)
			\pscircle[linewidth=0.04,dimen=outer,fillstyle=solid](5.596961,-0.49207732){0.3}
			\rput(0.0, 0.0){\psdots[dotsize=0.24,dotangle=1.6585509,dotstyle=asterisk](5.555858,-0.44208297)}
			\usefont{T1}{ptm}{m}{n}
			\rput{1.6585509}(-0.013336386,-0.16717103){\rput(5.7257795,-0.5559867){1}}
			\rput{2.5645797}(0.007995463,-0.2727382){\pscircle[linewidth=0.04,dimen=outer,fillstyle=solid,fillcolor=color7656b](6.0962777,0.0422293){0.3}}
			\rput(0.0, 0.0){\psdots[dotsize=0.24,dotangle=2.5645797,dotstyle=asterisk](6.0150156,0.06861964)}
			\usefont{T1}{ptm}{m}{n}
			\rput{2.5645797}(0.004709378,-0.27755004){\rput(6.187432,-0.04311069){3}}
			\rput{1.7392882}(0.013385256,-0.2052538){\pscircle[linewidth=0.04,dimen=outer,fillstyle=solid,fillcolor=color7656b](6.767661,0.33827746){0.3}}
			\rput(0.0, 0.0){\psdots[dotsize=0.24,dotangle=1.7392882,dotstyle=asterisk](6.68618,0.38582632)}
			\usefont{T1}{ptm}{m}{n}
			\rput{1.7392882}(0.011098001,-0.20871954){\rput(6.8588023,0.2516722){8}}
			\rput{6.3480153}(-0.06073111,-0.75054014){\pscircle[linewidth=0.04,dimen=outer,fillstyle=solid,fillcolor=color7673b](6.7369123,-0.9228548){0.27}}
			\usefont{T1}{ptm}{m}{n}
			\rput{-2.4062877}(0.05843174,0.25260183){\rput(6.0239363,-1.2451172){$x_3$}}
			\psline[linewidth=0.036cm](9.360937,-1.1853125)(8.280937,-1.6653125)
			\psline[linewidth=0.036cm](9.700937,-1.0453125)(10.360937,-0.9853125)
			\psline[linewidth=0.036cm](9.880938,-0.3653125)(9.580937,-0.9453125)
			\pscircle[linewidth=0.04,dimen=outer,fillstyle=solid](10.020938,-0.1253125){0.3}
			\psdots[dotsize=0.24,dotstyle=asterisk](9.920938,-0.0753125)
			\usefont{T1}{ptm}{m}{n}
			\rput(10.101875,-0.1753125){4}
			\pscircle[linewidth=0.04,dimen=outer,fillstyle=solid,fillcolor=color7656b](10.580937,-0.9653125){0.3}
			\psdots[dotsize=0.24,dotstyle=asterisk](10.500937,-0.8953125)
			\usefont{T1}{ptm}{m}{n}
			\rput(10.67625,-1.0153126){7}
			\pscircle[linewidth=0.04,dimen=outer,fillstyle=solid,fillcolor=color7673b](9.490937,-1.0753125){0.27}
			\usefont{T1}{ptm}{m}{n}
			\rput(9.819531,-1.3153125){2}
			\psline[linewidth=0.036cm](8.040937,-0.4653125)(8.060938,-1.5053124)
			\psline[linewidth=0.036cm](6.9209375,-1.0653125)(7.8009377,-1.6053125)
			\psline[linewidth=0.036cm](7.6609373,0.7746875)(7.9809375,-0.1853125)
			\psline[linewidth=0.036cm](9.120937,0.1146875)(8.220938,-0.1853125)
			\psline[linewidth=0.036cm](8.780937,0.5546875)(8.120937,-0.1853125)
			\pscircle[linewidth=0.04,dimen=outer,fillstyle=solid](7.6209373,1.0146875){0.3}
			\psdots[dotsize=0.24,dotstyle=asterisk](7.5409374,1.0646875)
			\usefont{T1}{ptm}{m}{n}
			\rput(7.719531,0.9446875){2}
			\psline[linewidth=0.036cm](8.260938,0.8146875)(8.100938,-0.1253125)
			\pscircle[linewidth=0.04,dimen=outer,fillstyle=solid,fillcolor=color7656b](8.300938,1.0546875){0.3}
			\psdots[dotsize=0.24,dotstyle=asterisk](8.240937,1.0846875)
			\usefont{T1}{ptm}{m}{n}
			\rput(8.410469,0.9846875){5}
			\pscircle[linewidth=0.04,dimen=outer,fillstyle=solid,fillcolor=color7656b](9.300938,0.2346875){0.3}
			\psdots[dotsize=0.24,dotstyle=asterisk](9.200937,0.2846875)
			\usefont{T1}{ptm}{m}{n}
			\rput(9.376094,0.1446875){9}
			\pscircle[linewidth=0.04,dimen=outer,fillstyle=solid,fillcolor=color7656b](8.900937,0.7546875){0.3}
			\psdots[dotsize=0.24,dotstyle=asterisk](8.820937,0.8246875)
			\usefont{T1}{ptm}{m}{n}
			\rput(9.01625,0.7246875){6}
			\pscircle[linewidth=0.04,dimen=outer,fillstyle=solid,fillcolor=color7673b](8.050938,-0.2753125){0.27}
			\usefont{T1}{ptm}{m}{n}
			\rput(8.421875,-0.3753125){4}
			\usefont{T1}{ptm}{m}{n}
			\rput(7.128594,-0.9553125){3}
			\pscircle[linewidth=0.04,dimen=outer,fillstyle=solid](8.060938,-1.6853125){0.3}
			\usefont{T1}{ptm}{m}{n}
			\rput(8.468594,-1.8353125){3}
			\usefont{T1}{ptm}{m}{n}
			\rput(7.3123436,-1.7553124){$y_3$}
			\usefont{T1}{ptm}{m}{n}
			\rput(8.802343,-0.9153125){$x_2$}
			\usefont{T1}{ptm}{m}{n}
			\rput(7.382344,-0.2353125){$x_4$}
			\psline[linewidth=0.036cm](3.6809375,0.9103125)(3.4809375,0.4103125)
			\psline[linewidth=0.036cm](3.2609375,-0.0696875)(3.0609374,-0.5696875)
			\psline[linewidth=0.036cm](2.9209375,-0.9296875)(2.7209375,-1.4296875)
			\psline[linewidth=0.036cm](2.1209376,-1.1896875)(2.3909376,-1.4896874)
			\pscircle[linewidth=0.04,dimen=outer,fillstyle=solid](2.5809374,-1.6896875){0.3}
			\psdots[dotsize=0.24,dotstyle=asterisk](2.5209374,-1.6396875)
			\usefont{T1}{ptm}{m}{n}
			\rput(2.6878126,-1.7596875){1}
			\pscircle[linewidth=0.04,dimen=outer,fillstyle=solid](3.0009375,-0.7496875){0.3}
			\pscircle[linewidth=0.04,dimen=outer,fillstyle=solid](3.3809376,0.1903125){0.3}
			\pscircle[linewidth=0.04,dimen=outer,fillstyle=solid](3.7809374,1.1703125){0.3}
			\pscircle[linewidth=0.04,dimen=outer,fillstyle=solid,fillcolor=color7656b](2.0209374,-1.0696875){0.3}
			\psline[linewidth=0.036cm](1.5609375,-0.5896875)(1.8309375,-0.8896875)
			\psline[linewidth=0.036cm](2.9209375,0.6703125)(3.1909375,0.3703125)
			\pscircle[linewidth=0.04,dimen=outer,fillstyle=solid,fillcolor=color7656b](2.8209374,0.7903125){0.3}
			\psline[linewidth=0.036cm](2.3609376,1.2703125)(2.6309376,0.9703125)
			\pscircle[linewidth=0.04,dimen=outer,fillstyle=solid,fillcolor=color7656b](1.4609375,-0.4296875){0.3}
			\psline[linewidth=0.036cm](1.0009375,0.0703125)(1.2709374,-0.2296875)
			\pscircle[linewidth=0.04,dimen=outer,fillstyle=solid,fillcolor=color7673b](2.2109375,1.4203125){0.27}
			\pscircle[linewidth=0.04,dimen=outer,fillstyle=solid,fillcolor=color7673b](0.9109375,0.2003125){0.27}
			\psline[linewidth=0.036cm](2.5209374,-0.2696875)(2.7909374,-0.5696875)
			\psline[linewidth=0.036cm](1.9609375,0.2703125)(2.2309375,-0.0296875)
			\pscircle[linewidth=0.04,dimen=outer,fillstyle=solid,fillcolor=color7656b](2.4209375,-0.1696875){0.3}
			\psline[linewidth=0.036cm](1.4009376,0.8903125)(1.6709375,0.5903125)
			\psline[linewidth=0.036cm](0.8009375,1.5503125)(1.0709375,1.2503124)
			\pscircle[linewidth=0.04,dimen=outer,fillstyle=solid,fillcolor=color7656b](1.2609375,1.0503125){0.3}
			\pscircle[linewidth=0.04,dimen=outer,fillstyle=solid,fillcolor=color7656b](1.8209375,0.4303125){0.3}
			\pscircle[linewidth=0.04,dimen=outer,fillstyle=solid,fillcolor=color7673b](0.6909375,1.7003125){0.27}
			\psdots[dotsize=0.24,dotstyle=asterisk](3.2809374,0.2403125)
			\usefont{T1}{ptm}{m}{n}
			\rput(3.461875,0.1403125){4}
			\psdots[dotsize=0.24,dotstyle=asterisk](1.9409375,-1.0396875)
			\usefont{T1}{ptm}{m}{n}
			\rput(2.1085937,-1.1596875){3}
			\psdots[dotsize=0.24,dotstyle=asterisk](2.9209375,-0.6996875)
			\usefont{T1}{ptm}{m}{n}
			\rput(3.0995312,-0.8196875){2}
			\psdots[dotsize=0.24,dotstyle=asterisk](2.3609376,-0.1396875)
			\usefont{T1}{ptm}{m}{n}
			\rput(2.5304687,-0.2396875){5}
			\psdots[dotsize=0.24,dotstyle=asterisk](1.7409375,0.5003125)
			\usefont{T1}{ptm}{m}{n}
			\rput(1.93625,0.4003125){6}
			\psdots[dotsize=0.24,dotstyle=asterisk](2.7409375,0.8603125)
			\usefont{T1}{ptm}{m}{n}
			\rput(2.91625,0.7403125){7}
			\psdots[dotsize=0.24,dotstyle=asterisk](1.3809375,-0.3796875)
			\usefont{T1}{ptm}{m}{n}
			\rput(1.5496875,-0.5196875){8}
			\psdots[dotsize=0.24,dotstyle=asterisk](1.1609375,1.1003125)
			\usefont{T1}{ptm}{m}{n}
			\rput(1.3360938,0.9603125){9}
			\usefont{T1}{ptm}{m}{n}
			\rput(3.9323437,1.9003125){$y_3$}
			\usefont{T1}{ptm}{m}{n}
			\rput(4.128594,0.8803125){3}
			\usefont{T1}{ptm}{m}{n}
			\rput(2.5995312,1.3803124){2}
			\usefont{T1}{ptm}{m}{n}
			\rput(1.061875,1.6003125){4}
			\usefont{T1}{ptm}{m}{n}
			\rput(1.3085938,0.1603125){3}
			\usefont{T1}{ptm}{m}{n}
			\rput(2.2023437,2.0403125){$x_2$}
			\usefont{T1}{ptm}{m}{n}
			\rput(0.48234376,2.2803125){$x_4$}
			\usefont{T1}{ptm}{m}{n}
			\rput(0.44234374,0.7603125){$x_3$}
			\usefont{T1}{ptm}{m}{n}
			\rput(3.0460937,-2.241875){(a)}
			\usefont{T1}{ptm}{m}{n}
			\rput(8.7760935,-2.241875){(b)}
			\end{pspicture} 
		}\end{center}\caption{Increasing tree counted by $\Gamma_9y_0=\mathcal{F}_9 \mathcal{F}_{8}\dots \mathcal{F}_1y_0$ and equivalent partition tree.}\label{Fig:chentrees}
	\end{figure}
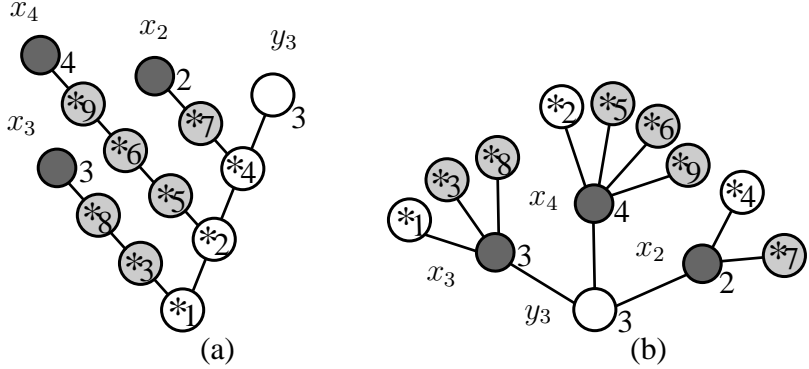

\section{One-parameter groups, generalized exponential polynomials, and enumeration of enriched trees} \label{sec:onepgroup} In this section we extend the kind of operators studied in previous sections to an arbitrary number of variables.  By considering the group infinitesimaly generated by a differential operator, we define generalized exponential polynomials and establish interesting relationships with different versions of the Lagrange inversion formula. In this way we generalized formula (\ref{Eq:cayley1}) to a wide family of enriched trees. 

Let $\mathcal{R}=\mathbb{C}[[X]]$ be the ring of formal power series in an arbitrary set of variables $X$  (either finite or infinite). The elements of $\mathcal{R}$ are of the form 
\begin{equation}\label{infinite}
H(X)=\sum_{\mathbf{k}\in \mathbb{N}_X}H[\mathbf{k}]\prod_{k_x\neq 0}\frac{x^{k_x}}{k_x!}=\sum_{\mathbf{k}\in \mathbb{N}_X}H[\mathbf{k}]\frac{X^{\mathbf{k}}}{\mathbf{k}!}
\end{equation}
In the sum of above,  $\mathbb{N}_X$ is the set of tuples $\mathbf{k}=(k_x)_{x\in X}$ such that $k_x=0$ for almost every $x\in X$. The conventions for powers and multiple factorials are similar to that at the beginning of Section \ref{Sec:combinaformal}. The combinatorics of the coefficients of the series regarding the operations of sum, product and partial derivatives is also similar. These combinatorial operations are interpreted in the same way by using colored sets of the form $(V,\kappa)$, $\kappa:V\rightarrow X$ being a coloration. Here we identify the set of colors with the set of variables itself, and define $H[V,\kappa]:=H[\mathbf{k}]$, $k_x=|\kappa^{-1}(x)|$.
\begin{defi}\normalfont 
A family $\mathbf{G}=\{G_s\}_{s\in S}$ of formal power series in $\mathcal{R}$ is said to be summable if for every $\mathbf{k}$, the coefficient $G_s[\mathbf{k}]$ is zero for almost every $s\in S$. Equivalently, for every colored set $(V,\kappa)$, we have that $G_s[V,\kappa]=0$ for almost every $s\in S$. 
\end{defi}

Observe that a family $\{G_s(X)\}_{s\in S}$ is summable if and only if the sum $\sum_{s\in S} G_s(X)$ is  well defined as a formal power series. The notion of summability is very important for the study of series in an infinite number of variables. It is the essential condition in order to perform the operation of substitution. We have the following Lemma.
\begin{lem}\normalfont  \label{prod}
	Assume that $\mathbf{G}(X)= \{G_s(X)\}_{s\in S}$ is a summable family. Then for any other, not necessarily summable family $\mathbf{H}(X)=\{H_s(X)\}_{s\in S}$, the product $\{G_s(X)H_s(X)\}_{s\in S}$ is also summable. 
\end{lem}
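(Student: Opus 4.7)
The plan is to verify the summability criterion directly on coefficients. By definition, I need to show that for every colored set $(V,\kappa)$ the coefficient $(G_sH_s)[V,\kappa]$ vanishes for almost every $s \in S$. The key tool is the combinatorial product formula (\ref{combinatorialproductc}), which expresses such a coefficient as a finite sum indexed by the decompositions of $V$.

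First I would fix an arbitrary colored set $(V,\kappa)$ and apply (\ref{combinatorialproductc}) to write
\begin{equation*}
(G_s H_s)[V,\kappa] \;=\; \sum_{V_1 \uplus V_2 = V} G_s[V_1,\kappa_{V_1}]\, H_s[V_2,\kappa_{V_2}].
\end{equation*}
Since $V$ is finite, there are only finitely many ordered decompositions $(V_1,V_2)$ of $V$. For each such decomposition, the summability hypothesis on $\mathbf{G}$ applied to the colored set $(V_1,\kappa_{V_1})$ tells me that $G_s[V_1,\kappa_{V_1}] \neq 0$ for only finitely many $s \in S$. Let $S_{V_1}$ denote this finite exceptional set.

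Next I would observe that if $s \notin \bigcup_{V_1 \uplus V_2 = V} S_{V_1}$, then every term in the decomposition sum vanishes (because its first factor vanishes), hence $(G_sH_s)[V,\kappa] = 0$. The exceptional set is a finite union of finite sets, hence finite. Therefore $(G_sH_s)[V,\kappa]$ is nonzero for at most finitely many $s$, which is exactly summability of $\{G_s H_s\}_{s \in S}$.

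There is no real obstacle here: the argument is essentially the observation that finiteness of the decomposition set together with summability of one factor forces the product to be summable regardless of the behavior of the other factor. The only care needed is to not inadvertently invoke summability of $\mathbf{H}$, and to notice that we only need \emph{one} factor in each term of the product expansion to vanish in order to kill that term.
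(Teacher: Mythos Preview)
Your proposal is correct and follows essentially the same approach as the paper: both arguments expand $(G_sH_s)[V,\kappa]$ via the combinatorial product formula, note that the sum is over the finitely many subsets $V_1\subseteq V$, and conclude that the exceptional set of indices is contained in the finite union $\bigcup_{V_1\subseteq V}\{s : G_s[V_1,\kappa|_{V_1}]\neq 0\}$. The paper states this as a single set inclusion while you unpack it term by term, but the content is identical.
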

\begin{proof}
	We have to prove that for any colored set $(V,\kappa)$, the set of elements $s\in S$ such that $(G_s.H_s)[V,\kappa]\neq 0$ is finite. By the combinatorial definition of product of formal power series (Eq. (\ref{combinatorialproduct})) we have that
	\begin{equation*}
	\{s|(G_s.H_s)[V,\kappa]\neq 0\}\subseteq \{s|\exists V_1\subseteq V,\; G_s[V_1,\kappa|_{V_1}]\neq 0\}= \bigcup_{V_1\subseteq V}\{s|G_s[V_1,\kappa|_{V_1}]\neq 0\}.
	\end{equation*}
\noindent The result follows from the summability of $\mathbf{G}$ and because the union in the right hand side of the equality is over a finite set. 
\end{proof}

\begin{theo} \label{substi}\normalfont
	Let $\mathbf{G}(X)=\{G_x(X)\}_{x\in X}$ be a summable family indexed by the set of variables $X$, and such that for every $x\in X$, $G_x(X)$ has zero constant term. Then, \begin{enumerate}
		\item The family $$\{\mathbf{G}^{\mathbf{k}}(X)\}_{\mathbf{k}\in \mathbb{N}_X},\; \mathbf{G}^{\mathbf{k}}(X)=\prod_{k_x\neq 0}G_x(X)^{k_x}$$
			is summable.
			\item Let $H(X)$ be an arbitrary series. Then, the substitution \begin{equation}
			H(G_x(X))_{x\in X}=\sum_{\mathbf{k}\in \mathbb{N}_X}H[\mathbf{k}]\prod_{k_x\neq 0}\frac{G_x(X)^{k_x}}{k_x!}=\sum_{\mathbf{k}\in \mathbb{N}_X}H[\mathbf{k}]\frac{\mathbf{G}^{\mathbf{k}}(X)}{\mathbf{k}!}
			\end{equation} is a well defined formal power series. Conversely, if $H(G_x)_{x\in X}$ is well defined for every  formal power series $H(X)$, then $ \{G_x(X)\}_{x\in X}$ is summable. 
	\end{enumerate}
	\end{theo}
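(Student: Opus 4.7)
The plan is to fix a colored set $(V,\kappa)$ and prove the two finiteness bounds that drive both parts of the theorem. For part (1), I would unfold the combinatorial product formula (\ref{combinatorialproduct}) iteratively: $\mathbf{G}^{\mathbf{k}}[V,\kappa]$ is a sum indexed by ordered decompositions $V = V_1 \uplus V_2 \uplus \cdots \uplus V_{|\mathbf{k}|}$ of $(V,\kappa)$ into $|\mathbf{k}|=\sum_{x} k_x$ disjoint colored pieces, each piece carrying a $G_x$-configuration for the variable $x$ occupying that slot.

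Two observations then bound the set of contributing multi-indices. First, since each $G_x$ has zero constant term, $G_x[\emptyset,\cdot]=0$, so any non-vanishing decomposition has only non-empty pieces; hence $|\mathbf{k}|\leq |V|$. Second, by the assumed summability of $\{G_x\}_{x\in X}$, for each of the finitely many subsets $W\subseteq V$ only finitely many $x\in X$ satisfy $G_x[W,\kappa|_W]\neq 0$; taking the union over the $2^{|V|}$ subsets gives a finite set $A\subseteq X$ containing the support of any $\mathbf{k}$ with $\mathbf{G}^{\mathbf{k}}[V,\kappa]\neq 0$. Combining these, the contributing $\mathbf{k}$'s lie in the finite set $\{\mathbf{k} : \mathrm{supp}(\mathbf{k})\subseteq A,\ |\mathbf{k}|\leq |V|\}$, so $\{\mathbf{G}^{\mathbf{k}}\}_{\mathbf{k}\in\mathbb{N}_X}$ is summable.

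For the forward direction of part (2), once summability of $\{\mathbf{G}^{\mathbf{k}}\}$ is in hand, Lemma \ref{prod} applied with each scalar $H[\mathbf{k}]/\mathbf{k}!$ regarded as a constant power series shows that $\{(H[\mathbf{k}]/\mathbf{k}!)\,\mathbf{G}^{\mathbf{k}}\}_{\mathbf{k}}$ is summable, so $H(G_x)_{x\in X}=\sum_{\mathbf{k}} (H[\mathbf{k}]/\mathbf{k}!)\,\mathbf{G}^{\mathbf{k}}$ is a well-defined element of $\mathcal{R}$. For the converse, I would exhibit a single test series that forces summability of $\{G_x\}$: take $H(X)=\sum_{x\in X}x$, so that $H[\mathbf{k}]=1$ exactly when $\mathbf{k}$ is a standard basis vector $e_x$ and vanishes otherwise. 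The putative substitution $H(G_x)_{x\in X}$ then collapses to the bare sum $\sum_{x\in X}G_x(X)$, and well-definedness of this sum is by definition the summability of $\{G_x\}_{x\in X}$.

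The main obstacle I anticipate is the book-keeping in the iterated product in part (1): making precise the correspondence between $\mathbf{k}$, the (ordered) enumeration of the $|\mathbf{k}|$ factors and the ordered set-partition of $V$, so that the two finiteness bounds on the support and size of $\mathbf{k}$ can be read off cleanly. Once this combinatorial unfolding is set up, both bounds are immediate, and part (2) becomes a purely formal consequence of Lemma \ref{prod} together with the single test series $H=\sum_x x$.
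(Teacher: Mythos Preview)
Your proposal is correct and follows essentially the same route as the paper: for part (1) you isolate the same finite support set (the paper calls it $S_1$, you call it $A$) from summability of $\{G_x\}$, combine it with a degree bound coming from $G_x[\emptyset]=0$, and conclude finiteness; for part (2) you invoke Lemma~\ref{prod} for the forward direction and the test series $H(X)=\sum_{x\in X}x$ for the converse, exactly as the paper does. The only cosmetic difference is that you bound the total degree $|\mathbf{k}|\le|V|$ while the paper bounds each coordinate $k_x\le|V|$; both yield a finite set once the support is restricted to $A=S_1$.
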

	\begin{proof}
		The first part of item 2 is a consequence of the Lemma \ref{prod} and item 1. The converse part of item 2 follows by taking $H(X)=\sum_{x\in X}x$. 
		Then, what remains to prove is that the set $\{\mathbf{k}|\mathbf{G}^{\mathbf{k}}[V,\kappa]\neq 0\}$ is finite for every colored set $(V,\kappa)$. By the same argument used in Lemma \ref{prod}, the summability of $\mathbf{G}$ implies that the set $$S_1=\{x|\exists V_1\subseteq V: G_x[V_1,\kappa|_{V_1}]\neq 0\}$$ is finite. Hence, by the definition of product of series we have that \begin{equation}\label{cond1}
		\{x|\exists k\geq 1: G^k_x[V,\kappa]\neq 0\}\subseteq S_1.
		\end{equation}
		Since $G_x[\emptyset]=0$ for every $x\in X$, we also have
		\begin{equation}\label{cond2}
k>|V|\Rightarrow G_x^k[V,\kappa]=0.	
		\end{equation} 
		By Eq. (\ref{cond1}) and condition in Eq. (\ref{cond2}), we obtain that
		$$\{\mathbf{k}|\mathbf{G}^{\mathbf{k}}[V,\kappa]\neq 0\}\subseteq \{\mathbf{k}|k_x=0,\mbox{ if } x\notin S_1\}\cap \{\mathbf{k}|\forall x: 1\leq k_x\leq |V|\},$$
		which is a finite set.
	\end{proof}

	The coefficient of the substitution $H(G_x)_{x\in X}$ on a colored set $(V,\kappa)$ has the following combinatorial interpretation (see \cite{Nava-yo}):
	\begin{equation}\label{substitution}
	H(G_x)_{x\in X}[V,\kappa]=\sum_{(\pi,\hat{\kappa})}H[\pi,\hat{\kappa}]\prod_{B\in\pi}G_{\hat{\kappa}(B)}[B,\kappa|_B].
	\end{equation}
	\noindent Where the sum of above ranges over all colored partitions $(\pi, \hat{\kappa})$, $\pi$ a partition of $V$, and   $\hat{\kappa}:\pi\rightarrow X$ a (external) coloring of the blocks of $\pi$. The summability of $\{G_x(X)\}_{x\in X}$ ensures that the sum has only a finite number of non vanishing terms.
\begin{defi}\normalfont 
	For a summable family $\boldsymbol{\phi}(X)=\{\phi_x(X)\}_{x\in S}$, indexed by a subset $S$ of $X$, define the operator $\mathscr{D}^{\boldsymbol\phi}$  by $$\mathscr{D}^{\boldsymbol\phi}:=\sum_{x\in S }\phi_x(X)\partial_x.$$ 
\end{defi}
\noindent By Lemma \ref{prod}, this is a well defined operator on $\mathcal{R}$. It is easy to check that it is a derivation. Moreover, it preserves summability,
\begin{lem}\normalfont  \label{preserv}
	Let $\{G_z(X)\}_{z\in Z}$ be a summable family. Then for every non negative integer $n$ we have that $\{(\mathscr{D}^{\boldsymbol{\phi}})^n G_z(X)\}_{z\in Z}$ is also summable.
\end{lem}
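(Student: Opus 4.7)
The plan is to proceed by induction on $n$. The case $n=0$ is immediate, since $(\mathscr{D}^{\boldsymbol\phi})^0 G_z = G_z$ and summability of $\{G_z\}_{z\in Z}$ is given by hypothesis. For the inductive step, set $H_z := (\mathscr{D}^{\boldsymbol\phi})^{n-1}G_z$, which is summable by the inductive hypothesis, so it suffices to prove the case $n=1$ applied to the family $\{H_z\}_{z\in Z}$.

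Thus the core task is to show: if $\{H_z(X)\}_{z\in Z}$ is summable, then so is $\{\mathscr{D}^{\boldsymbol\phi} H_z(X)\}_{z\in Z}$. I would fix an arbitrary colored set $(V,\kappa)$ and use the combinatorial formulas for product and partial derivative recalled earlier in Section \ref{Sec:combinaformal} to expand
\begin{equation*}
(\mathscr{D}^{\boldsymbol\phi}H_z)[V,\kappa]=\sum_{x\in S}\sum_{V_1\uplus V_2=V}\phi_x[V_1,\kappa|_{V_1}]\cdot H_z[\{\ast\}\uplus V_2,\kappa|_{V_2}^{+x}],
\end{equation*}
where $\kappa|_{V_2}^{+x}$ denotes the extension of $\kappa|_{V_2}$ assigning color $x$ to the ghost element. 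The goal then becomes showing that the set $\{z\in Z : (\mathscr{D}^{\boldsymbol\phi}H_z)[V,\kappa]\neq 0\}$ is finite.

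To that end I would combine three elementary finiteness observations: (i) the outer sum over decompositions $V=V_1\uplus V_2$ is finite because $V$ is finite; (ii) summability of $\boldsymbol\phi=\{\phi_x\}_{x\in S}$ forces, for each fixed $V_1$, only finitely many $x\in S$ to satisfy $\phi_x[V_1,\kappa|_{V_1}]\neq 0$; (iii) summability of $\{H_z\}_{z\in Z}$ forces, for each fixed triple $(V_1,V_2,x)$, only finitely many $z\in Z$ to satisfy $H_z[\{\ast\}\uplus V_2,\kappa|_{V_2}^{+x}]\neq 0$. Consequently the set of $z$ for which the sum is nonzero is contained in a finite union of finite sets, hence finite.

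The main obstacle is really just careful bookkeeping: one must not forget to invoke summability of $\boldsymbol\phi$ in step (ii), since without it the sum over $x\in S$ could have infinitely many nonzero summands even for a single $z$. This is precisely the hypothesis that makes $\mathscr{D}^{\boldsymbol\phi}$ a well-defined operator on $\mathcal{R}$ via Lemma \ref{prod}, and the same hypothesis is what carries the induction through.
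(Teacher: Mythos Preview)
Your proof is correct. The paper states this lemma without proof, so there is nothing to compare against; your argument is exactly the natural one and fills the gap cleanly. The induction reduces to the case $n=1$, and your expansion of $(\mathscr{D}^{\boldsymbol\phi}H_z)[V,\kappa]$ together with the three finiteness observations (i)--(iii) is precisely what is needed: the containment
\[
\{z:(\mathscr{D}^{\boldsymbol\phi}H_z)[V,\kappa]\neq 0\}\subseteq \bigcup_{\substack{V_1\uplus V_2=V\\ \phi_x[V_1,\kappa|_{V_1}]\neq 0}}\{z:H_z[\{\ast\}\uplus V_2,\kappa|_{V_2}^{+x}]\neq 0\}
\]
exhibits the left-hand side as a finite union of finite sets. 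Your remark that summability of $\boldsymbol\phi$ is essential in step (ii) is also well placed, since that is exactly the hypothesis needed to make the indexing set of the union finite.
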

Define $e^{t\mathscr{D}^{\boldsymbol{\phi}}}$, by
\begin{equation}\label{parametergroup}e^{t\mathscr{D}^{\boldsymbol{\phi}}}H(X)=\sum_{n=0}^{\infty}(
\mathscr{D}^{\phi})^n H(X) \frac{t^n}{n!}\in \mathcal{R}[[t]].\end{equation}
By abuse of language we call $e^{t\mathscr{D}^{\boldsymbol{\phi}}}$ the group infinitesimaly generated by $\mathscr{D}^{\boldsymbol{\phi}}$. The series in (\ref{parametergroup}) is not necessarily well defined as an element of $\mathcal{R}$ for particular values of $t$.  

 Define the family $\{F^{\boldsymbol{\phi}}_x(t)\}_{x\in X}$ of formal power series in $\mathcal{R}[[t]]=\mathbb{C}[[X,t]]$ by
\begin{equation}\label{group}
F^{\boldsymbol{\phi}}_x(t):=e^{t\mathscr{D}^{\phi}}x =\sum_{n=0}^{\infty}(\mathscr{D}^{\phi})^n x \frac{t^n}{n!}, \; x\in X.
\end{equation}
 \begin{rem}\normalfont By representing $\mathscr{D}^{\boldsymbol{\phi}}$ as sum of corolla operators, and using grafting techniques as in Prop.\ref{tree} we obtain that the series $F^{\boldsymbol{\phi}}_x(t)$ has the following combinatorial interpretation. The coefficients $(\mathscr{D}^{\boldsymbol{\phi}})^nx=\mathcal{T}_{n,x}^{\boldsymbol{\phi}}(X)\in \mathbb{C}[[X]]$ in the expansion of $F^{\boldsymbol{\phi}}_x(t)$
 have as configurations increasing trees enriched with the family $\{\phi_z(X)\}_{z\in X}$. More precisely, it counts the number of increasing trees with $n$ internal vertices colored with colors in $X$, the root colored with $x$ and leaves weighted with their colors (in X). The fiber of an internal vertex colored with color $z$ is enriched (weighted) with the corresponding series $\phi_z(X)$.
  \end{rem} 
\begin{lem}\normalfont  
	The family of formal power series $\{F^{\boldsymbol{\phi}}_x(t)\}_{x\in X}$ defined by Eq. (\ref{group}) is summable.
\end{lem}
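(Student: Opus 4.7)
The plan is to reduce the claim to Lemma \ref{preserv}. Summability of $\{F^{\boldsymbol{\phi}}_x(t)\}_{x \in X}$ in $\mathcal{R}[[t]] = \mathbb{C}[[X, t]]$ means that for every pair $(\mathbf{k}, m) \in \mathbb{N}_X \times \mathbb{N}$, the coefficient of $X^{\mathbf{k}} t^m / (\mathbf{k}!\, m!)$ in $F_x^{\boldsymbol{\phi}}(t)$ is nonzero for only finitely many $x \in X$. Since by Eq. (\ref{group}) the coefficient of $t^m/m!$ in $F_x^{\boldsymbol{\phi}}(t)$ equals $(\mathscr{D}^{\boldsymbol{\phi}})^m x \in \mathcal{R}$, this is equivalent to the summability of the family $\{(\mathscr{D}^{\boldsymbol{\phi}})^m x\}_{x \in X}$ in $\mathcal{R}$ for each fixed $m \geq 0$.

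I would first observe that the seed family $\{x\}_{x \in X}$ is trivially summable: for any colored set $(V, \kappa)$ with colors in $X$, the coefficient $x[V, \kappa]$ is non-zero only when $V$ is a singleton whose unique element is coloured by $x$, so at most one index contributes to each such coefficient. Then I would apply Lemma \ref{preserv} with $Z = X$ and $G_z = z$: the lemma guarantees that for every $m \geq 0$, the family $\{(\mathscr{D}^{\boldsymbol{\phi}})^m z\}_{z \in X}$ is again summable in $\mathcal{R}$. Combined with the reduction of the previous paragraph, this immediately yields the desired summability of $\{F_x^{\boldsymbol{\phi}}(t)\}_{x \in X}$ in $\mathbb{C}[[X, t]]$.

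The only substantive input is the preservation of summability by $\mathscr{D}^{\boldsymbol{\phi}}$, which is already packaged in Lemma \ref{preserv}. The main point worth double-checking is that summability in $\mathbb{C}[[X, t]]$ separates over the powers of $t$: a $t$-indexed family $\{H_x(t)\}_{x \in X}$ with $H_x(t) = \sum_m h_{x,m}(X)\, t^m/m!$ is summable in $\mathbb{C}[[X, t]]$ if and only if each $\{h_{x,m}\}_{x \in X}$ is summable in $\mathbb{C}[[X]]$, which is immediate from the definition since each monomial in $t$ constitutes an independent coefficient position. No further obstacle arises.
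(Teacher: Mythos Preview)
Your proof is correct and follows essentially the same route as the paper's own argument: both start from the trivial summability of $\{x\}_{x\in X}$, invoke Lemma~\ref{preserv} to deduce summability of $\{(\mathscr{D}^{\boldsymbol{\phi}})^n x\}_{x\in X}$ for every $n$, and then pass to the series in $t$. Your write-up is simply more explicit about why summability in $\mathbb{C}[[X,t]]$ reduces to coefficient-wise summability in $\mathbb{C}[[X]]$ for each power of $t$, a point the paper leaves implicit.
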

\begin{proof}
The family $\{x\}_{x\in X}$ is clearly summable. By Lemma \ref{preserv}, $\{(\mathscr{D}^{\boldsymbol{\phi}})^n x\}_{x\in X}$ is summable for every $n\in \mathbb{N}$. Then, $\{F^{\boldsymbol{\phi}}_x(t)\}_{x\in X}$ is also summable.   
\end{proof}
For simplicity, when clear from the context, we suppress the super index ${\boldsymbol{\phi}}$ from the family. This summable family completely  characterizes the one parameter operator group $e^{t\mathscr{D}^{\boldsymbol\phi}}$.

\begin{cor}\normalfont  \label{important}
	Let $\{F_x(t)\}_{x\in X}$ be as defined in Eq. (\ref{group}). Then, for any series $H(X)\in \mathcal{R}$ we have
	\begin{equation}e^{t\mathscr{D}^{\boldsymbol\phi}}H(X)=H(F_x(t))_{x\in X}
	\end{equation}
	
\end{cor}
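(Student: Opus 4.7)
The plan is to establish that $e^{t\mathscr{D}^{\boldsymbol\phi}}:\mathcal{R}\to\mathcal{R}[[t]]$ is a $\mathbb{C}$-algebra homomorphism, so that by the identity $e^{t\mathscr{D}^{\boldsymbol\phi}}(x)=F_x(t)$ from (\ref{group}) it is forced to coincide with substitution of the family $\{F_x(t)\}_{x\in X}$ for the variables in any series $H(X)$.

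First I would verify the generalized Leibniz rule. Since $\mathscr{D}^{\boldsymbol\phi}$ is a sum of derivations $\phi_x\partial_x$ and the family $\boldsymbol\phi$ is summable, the operator is a well-defined derivation on $\mathcal{R}$. An easy induction on $n$ yields
\begin{equation*}
(\mathscr{D}^{\boldsymbol\phi})^n(fg)\;=\;\sum_{k=0}^{n}\binom{n}{k}\,(\mathscr{D}^{\boldsymbol\phi})^k(f)\,(\mathscr{D}^{\boldsymbol\phi})^{n-k}(g).
\end{equation*}
Multiplying by $t^n/n!$ and summing over $n$ in $\mathcal{R}[[t]]$ gives the homomorphism identity $e^{t\mathscr{D}^{\boldsymbol\phi}}(fg)=e^{t\mathscr{D}^{\boldsymbol\phi}}(f)\cdot e^{t\mathscr{D}^{\boldsymbol\phi}}(g)$. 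Iterating this rule (or applying its multivariable version once) together with (\ref{group}), for every finitely supported $\mathbf{k}$ I obtain
\begin{equation*}
e^{t\mathscr{D}^{\boldsymbol\phi}}(X^{\mathbf{k}})\;=\;\prod_{x\,:\,k_x>0}F_x(t)^{k_x}\;=\;F(t)^{\mathbf{k}}.
\end{equation*}

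Second, I would expand $H(X)=\sum_{\mathbf{k}}H[\mathbf{k}]\,X^{\mathbf{k}}/\mathbf{k}!$ and push $e^{t\mathscr{D}^{\boldsymbol\phi}}$ past the sum. Since $\{F_x(t)\}_{x\in X}$ is summable in $\mathbb{C}[[X,t]]$ (already proved) and each $F_x(t)$ has zero constant term there, Theorem~\ref{substi} makes $H(F_x(t))_{x\in X}$ a well-defined element of $\mathbb{C}[[X,t]]$ whose expansion is exactly $\sum_{\mathbf{k}}H[\mathbf{k}]\,F(t)^{\mathbf{k}}/\mathbf{k}!$. Assembling the pieces,
\begin{equation*}
e^{t\mathscr{D}^{\boldsymbol\phi}}H(X)\;=\;\sum_{\mathbf{k}}\frac{H[\mathbf{k}]}{\mathbf{k}!}\,e^{t\mathscr{D}^{\boldsymbol\phi}}(X^{\mathbf{k}})\;=\;\sum_{\mathbf{k}}\frac{H[\mathbf{k}]}{\mathbf{k}!}\,F(t)^{\mathbf{k}}\;=\;H(F_x(t))_{x\in X},
\end{equation*}
which is the desired identity.

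The main obstacle is justifying the first equality above: the interchange of $e^{t\mathscr{D}^{\boldsymbol\phi}}$ with the possibly infinite series expansion of $H$. The needed observation is that $(\mathscr{D}^{\boldsymbol\phi})^n$ localizes on coefficients. Expanding $(\mathscr{D}^{\boldsymbol\phi})^n$ as an iterated composition $\sum \phi_{x_1}\partial_{x_1}\cdots\phi_{x_n}\partial_{x_n}$ and applying Leibniz, each summand sends $X^{\mathbf{k}}$ to a finite linear combination of monomials whose multi-indices differ from $\mathbf{k}$ by amounts controlled by the (finitely many, thanks to summability of $\boldsymbol\phi$) $\phi_x$'s that can act nontrivially on a given target monomial $X^{\mathbf{m}}$. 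Consequently, for each fixed $X^{\mathbf{m}}t^n$ only finitely many source indices $\mathbf{k}$ contribute, and termwise summation is legitimate. Once this bookkeeping is carried out, the rest of the argument is purely formal.
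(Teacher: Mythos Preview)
Your proposal is correct and follows essentially the same route as the paper: both argue that $e^{t\mathscr{D}^{\boldsymbol\phi}}$ is multiplicative (the paper simply asserts this from $\mathscr{D}^{\boldsymbol\phi}$ being a derivation, while you spell out the Leibniz induction), apply this to monomials to get $e^{t\mathscr{D}^{\boldsymbol\phi}}X^{\mathbf{k}}=\prod F_x(t)^{k_x}$, and then sum over $\mathbf{k}$ using the summability of $\{F_x(t)\}$ and Theorem~\ref{substi}. Your additional paragraph justifying the interchange of $e^{t\mathscr{D}^{\boldsymbol\phi}}$ with the infinite expansion of $H$ is a point the paper passes over in one line, so your argument is if anything more careful than the original.
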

\begin{proof}
	Since $\mathscr{D}^{\boldsymbol{\boldsymbol\phi}}$ is a derivation, $e^{t\mathscr{D}^{\boldsymbol\phi}}$ is a multiplicative map. Then,
	$$e^{t\mathscr{D}^{\boldsymbol{\phi}}}X^{\mathbf{k}}=\prod_{k_x\neq 0}F_x(t)^{k_x},$$
	which is summable by Theorem \ref{substi}. We have
	$$e^{t\mathscr{D}^{\boldsymbol{\phi}}}H(X)=\sum_{\mathbf{k}\in \mathbb{N}_X}H[\mathbf{k}]\frac{e^{t\mathscr{D}^{\boldsymbol{\phi}}}X^{\mathbf{k}}}{\mathbf{k}!}=\sum_{\mathbf{k}\in \mathbb{N}_X}H[\mathbf{k}]\frac{\prod_{k_x\neq 0}F_x(X)^{k_x}}{\mathbf{k}!}=H(F_x(t))_{x\in X}.$$
\end{proof}
\begin{cor}\normalfont  \label{Cor:gendifequation}
	The family $\{F_x(t)\}_{x\in X}$ satisfies the system of differential equations
	\begin{equation}
	\begin{cases}y'_x(t)=
	\phi_x(y_z(t))_{z\in X},\; x\in X\\
	y_x(0)=x.
	\end{cases}
	\end{equation}
\end{cor}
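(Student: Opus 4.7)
The plan is to differentiate the defining series term by term and then apply Corollary \ref{important} to recognize the right-hand side. Concretely, starting from
$$F_x(t) = e^{t\mathscr{D}^{\boldsymbol{\phi}}}x = \sum_{n=0}^{\infty}(\mathscr{D}^{\boldsymbol{\phi}})^n x\,\frac{t^n}{n!},$$
I will first justify term-by-term differentiation in $\mathcal{R}[[t]]$, which is a routine operation on formal power series, and obtain
$$F'_x(t) = \sum_{n=1}^{\infty}(\mathscr{D}^{\boldsymbol{\phi}})^n x\,\frac{t^{n-1}}{(n-1)!} = \sum_{m=0}^{\infty}(\mathscr{D}^{\boldsymbol{\phi}})^{m+1}x\,\frac{t^m}{m!} = e^{t\mathscr{D}^{\boldsymbol{\phi}}}\bigl(\mathscr{D}^{\boldsymbol{\phi}}x\bigr).$$

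Next I would evaluate $\mathscr{D}^{\boldsymbol{\phi}}x$. Since $\mathscr{D}^{\boldsymbol{\phi}} = \sum_{z\in S}\phi_z(X)\partial_z$ and $\partial_z x = \delta_{zx}$, one has $\mathscr{D}^{\boldsymbol{\phi}}x = \phi_x(X)$ (using the convention $\phi_x\equiv 0$ for $x\notin S$, which is consistent with the indexing of the summable family). Then Corollary \ref{important} applied to $H(X) = \phi_x(X)\in\mathcal{R}$ yields
$$e^{t\mathscr{D}^{\boldsymbol{\phi}}}\phi_x(X) = \phi_x\bigl(F_z(t)\bigr)_{z\in X},$$
which combined with the previous identity gives the sought differential equation $F'_x(t) = \phi_x(F_z(t))_{z\in X}$.

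Finally, the initial condition is immediate from the series expansion: setting $t=0$ kills every term except $n=0$, yielding $F_x(0)=x$. The main point that requires a bit of care is the interchange of the derivation $\mathscr{D}^{\boldsymbol{\phi}}$ with the exponential operator $e^{t\mathscr{D}^{\boldsymbol{\phi}}}$, but this is automatic in the formal setting because both expressions reduce to the same termwise shift of the series $\sum_n (\mathscr{D}^{\boldsymbol{\phi}})^n x\, t^n/n!$. No genuine analytic obstacle appears; the only substantive input is Corollary \ref{important}, which in turn rests on the summability of $\{F_x(t)\}_{x\in X}$ already established.
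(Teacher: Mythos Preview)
Your proof is correct and follows essentially the same route as the paper: differentiate $e^{t\mathscr{D}^{\boldsymbol{\phi}}}x$ to get $e^{t\mathscr{D}^{\boldsymbol{\phi}}}\mathscr{D}^{\boldsymbol{\phi}}x=e^{t\mathscr{D}^{\boldsymbol{\phi}}}\phi_x(X)$, then invoke Corollary~\ref{important}. The only difference is that you spell out the term-by-term differentiation and the identity $\mathscr{D}^{\boldsymbol{\phi}}x=\phi_x(X)$ in more detail than the paper does.
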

\begin{proof}
	The derivative $F_x(t)$ is readily computed by interchanging with the one parameter group operator
	$$F'_x(t)=\frac{d}{dt}e^{t\mathscr{D}^{\boldsymbol{\phi}}}x=e^{t\mathscr{D}^{\boldsymbol{\phi}}}   \mathscr{D}^{\boldsymbol{\phi}}x=e^{t\mathscr{D}^{\boldsymbol{\phi}}}\phi_x(X)=\phi_x(F_z(t))_{z\in X}.$$
	The latter equality obtained from Corollary \ref{important}.	
\end{proof}

\subsection{Generalized exponential polynomials} 
Let $F_x^+(t):=F_x(t)-x$. Define the generalized exponential polynomials $Y^x_n(X)$ by the generating function
\begin{equation*}
Y_x(t)=\sum_{n=0}^{\infty}Y^x_n(X)\frac{t^n}{n!}:=e^{F_x^+(t)}.
\end{equation*}
The generating function $Y^x_n(X)$ counts the number of forests of non-singleton increasing trees with $n$ internal nodes.

By the definition and Corollary \ref{Cor:gendifequation},	the generating function $Y_x(t)$ satisfies the following system of differential equation 
\begin{equation}
\begin{cases}Y'_x(t)=\phi_x(F_z)_{z\in X}.Y_x(t),\; x\in X \\ Y_x(0)=1.\end{cases}
	\end{equation}
As we have that $e^{t\mathscr{D}^{\boldsymbol{\phi}}}e^x=e^{F_x(t)}=e^x e^{F^+_x(t)}$, we obtain 
\begin{equation} 
Y_x(t)=e^{F^+_x(t)}=e^{-x}e^{t\mathscr{D}^{\boldsymbol{\phi}}}e^x,
\end{equation}
\noindent and the Rodrigues-like formula
\begin{equation}\label{Eq:Rodrigues}
Y_n^x(X)=e^{-x}(\mathscr{D}^{\boldsymbol{\phi}})^ne^x.
\end{equation}

\noindent From that we get the recursion
\begin{prop}\normalfont  
The exponential polynomials $Y_n^x(X)$ satisfy the recursive formula
\begin{equation}
Y_{n+1}^x(X)=\phi_x(X)Y_n^x(X)+\mathscr{D}^{\boldsymbol{\phi}}Y_n(X).
\end{equation}
\end{prop}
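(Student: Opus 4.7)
The plan is to derive the recursion directly from the Rodrigues-like identity (\ref{Eq:Rodrigues}), exploiting the fact that $\mathscr{D}^{\boldsymbol{\phi}}$ is a derivation and that $e^x$ is almost an eigenvector for it.

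First I would write
\[
Y_{n+1}^x(X) = e^{-x}(\mathscr{D}^{\boldsymbol{\phi}})^{n+1}e^x = e^{-x}\mathscr{D}^{\boldsymbol{\phi}}\bigl((\mathscr{D}^{\boldsymbol{\phi}})^{n}e^x\bigr) = e^{-x}\mathscr{D}^{\boldsymbol{\phi}}\bigl(e^x Y_n^x(X)\bigr),
\]
using (\ref{Eq:Rodrigues}) in the form $(\mathscr{D}^{\boldsymbol{\phi}})^n e^x = e^x Y_n^x(X)$.

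Next I would apply the Leibniz rule. Since $\mathscr{D}^{\boldsymbol{\phi}} = \sum_{z\in S}\phi_z(X)\partial_z$ is a linear combination of partial derivations, it is itself a derivation, so
\[
\mathscr{D}^{\boldsymbol{\phi}}\bigl(e^x Y_n^x(X)\bigr) = \bigl(\mathscr{D}^{\boldsymbol{\phi}} e^x\bigr) Y_n^x(X) + e^x \,\mathscr{D}^{\boldsymbol{\phi}} Y_n^x(X).
\]
The key observation is that $\partial_z e^x = \delta_{z,x}\,e^x$, whence $\mathscr{D}^{\boldsymbol{\phi}} e^x = \phi_x(X) e^x$ (with the convention that $\phi_x\equiv 0$ if $x\notin S$, so that the formula covers every variable uniformly).

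Combining these two steps and multiplying by $e^{-x}$ gives
\[
Y_{n+1}^x(X) = e^{-x}\bigl(\phi_x(X) e^x Y_n^x(X) + e^x \mathscr{D}^{\boldsymbol{\phi}} Y_n^x(X)\bigr) = \phi_x(X) Y_n^x(X) + \mathscr{D}^{\boldsymbol{\phi}} Y_n^x(X),
\]
as required. There is no genuine obstacle here; the only subtlety worth mentioning is justifying that $\mathscr{D}^{\boldsymbol{\phi}}$ may be applied termwise to the (possibly infinite) product expansion of $e^x Y_n^x(X)$ and that it commutes with multiplication by $e^{\pm x}$ in the sense used above, but both facts follow from the summability results (Lemma \ref{preserv}) and the derivation property already established for $\mathscr{D}^{\boldsymbol{\phi}}$ on $\mathcal{R}$.
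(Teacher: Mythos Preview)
Your proof is correct and follows essentially the same route as the paper: start from the Rodrigues-like formula $Y_n^x=e^{-x}(\mathscr{D}^{\boldsymbol\phi})^n e^x$, peel off one application of $\mathscr{D}^{\boldsymbol\phi}$, and use the Leibniz rule together with $\mathscr{D}^{\boldsymbol\phi}e^x=\phi_x(X)e^x$. You have simply made explicit the steps (the derivation property and the computation of $\mathscr{D}^{\boldsymbol\phi}e^x$) that the paper's one-line proof leaves implicit.
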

\begin{proof}
	By Eq. (\ref{Eq:Rodrigues})  Hence,
	$$Y_{n+1}^x(X)=e^{-x}\mathscr{D}^{\boldsymbol{\phi}}(\mathscr{D}^{\boldsymbol{\phi}})^ne^x=e^{-x}\mathscr{D}^{\boldsymbol{\phi}}e^xY_n^x(X)=e^{-x}(e^x\phi_x(X)Y_n(X)+e^x\mathscr{D}^{\boldsymbol{\phi}}Y_n^x(X)).$$
\end{proof}
\begin{ex}\normalfont  
	Let $X=\{x\}$, the differential operators are of the form $\phi(x)D$, $D$ being the ordinary derivative $Df(x)=f'(x)$. The series $F_x(t)= \mathcal{A}_{\phi}^{\uparrow}(x,t)=e^{t\phi(x)D}x$ is the solution to the differential equation (\ref{autonomous1}). 
	\begin{enumerate}
		\item The operator $xD$ gives us the differential equation $y'=y$, $y(0)=x$ that has as solution $F_x(t)=xe^t$. The exponential polynomials are Touchard's (\cite{Touchard})
		$$\sum_{n=0}^{\infty}T_n(x)\frac{t^n}{n!}=e^{x(e^t-1)}.$$
		Touchard polynomials enumerate the set partitions according with the number of blocks. The recursive formula becomes the classical
		$$T_{n+1}(x)=x(T_{n}(x)+T'_n(x)).$$
		\item More generally, the operator $x^rD$ has associated the differential equation $y'=y^r$, $y(0)=x$. It has as solution
		$$F_x(t)=\frac{x}{\sqrt[r-1]{1-(r-1)x^{r-1}t}},$$
		the generating function for the $r$-ary plane increasing trees, the leaves weighted by $x$. The corresponding exponential polynomials count the forests of such trees. They satisfy the recursion
		$$T^{(r)}_{n+1}=x^r(T^{(r)}_{n}(x)+DT_n^{(r)}(x)).$$
	\end{enumerate}
	\end{ex}
	\begin{ex}\normalfont  
		For the set $X=\{x_0, x_1, x_2,\dots;q_0,q_1,\dots\}$, the operator $$\mathscr{D}^{\phi,q}=\sum_{k=0}^{\infty}\phi(x_{k+1})q_k\partial_{x_k}$$ extends $\mathcal{D}_n^{\phi,q}$ to $\mathbb{C}[[X]].$ Clearly, $\phi(x_{k+1})q_k$ is a summable family.
		
		The series $$F_{x_0}^{\phi}(t)=e^{t\mathscr{D}^{\phi,q}}x_0=\sum_{k=0}^{\infty} (\mathscr{D}^{\phi,q})^k x_{0}=:\mathcal{A}^{\uparrow}_{\phi}(t,\mathbf{x},\mathbf{q})$$
		is the exponential generating function of the increasing trees $\mathcal{T}_n^{\phi}(\mathbf{x},\mathbf{q})$,$$\mathcal{A}^{\uparrow}_{\phi}(t,\mathbf{x},\mathbf{q})=\sum_{n=0}^{\infty}\mathcal{T}_n^{\phi}(\mathbf{x},\mathbf{q})\frac{t^n}{n!}.$$

		According to Corollary \ref{Cor:gendifequation}, it is the solution to the differential equation
		$$\begin{cases}(F_{x_0}^{\phi})'(t)=\frac{\partial}{\partial t}\mathcal{A}^{\uparrow}_{\phi}(t,\mathbf{x},\mathbf{q})=q_0\phi(F_{x_1}^{\phi}(t))=q_0\phi(\mathcal{A}^{\uparrow}_{\phi}(t,S\mathbf{x},S\mathbf{q})),\\F_{x_0}^{\phi}(0)=x_0.\end{cases}$$
since, by the combinatorial interpretation of $\mathcal{T}_n^{\phi}(\mathbf{x},\mathbf{q})$ in Proposition \ref{tree1} it is easy to see that 
$$F_{x_n}^{\phi}(t)=\mathscr{A}_{\phi}^{\uparrow}(t,S^n\mathbf{x},S^n\mathbf{q}),$$
$S$	being the shift operator:
\begin{eqnarray*}S(x_0,x_1,x_2,\dots)&=&(x_1,x_2,x_3,\dots),\\ S(q_0,q_1,q_2,\dots)&=&(q_1,q_2,q_3,\dots).	
	\end{eqnarray*}\end{ex}
	
\subsection{Lagrange inversion}
For a formal power series $F(t)\in \mathcal{R}[[t]]$ with non-zero constant term, define the generating series of $F$-enriched trees $\mathcal{A}_F(t)$, by the implicit formula (see \cite{BLL, Joyal})
$$\mathcal{A}_F(t)=tF(\mathcal{A}_F(t)).$$
 The configurations of $\mathcal{A}_F[n]\in \mathbb{C}[[X]]$ are rooted trees weighted with the coefficients of $F(X)$ as follows,
\begin{equation}
\mathcal{A}_F[n]=\sum_{T}\prod_{k=1}^nF[T^{-1}(v)]
\end{equation} 
Where the sum is over all rooted trees with vertices in $[n]$.

As was pointed out in \cite{Chen}, formula (\ref{Eq:cayley1}) is equivalent to the Lagrange inversion formula. That can be stated in terms of the enumeration of $F$-enriched trees as follows.
\begin{prop}\normalfont  
	\label{Lagrange}The coefficient $\mathcal{A}_F[n]$ is given by the formula
	\begin{equation}
	\mathcal{A}_F[n]=\frac{d^{n-1}}{dt^{n-1}}F^{n}(t)|_{t=0}.
	\end{equation}
\end{prop}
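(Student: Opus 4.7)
The plan is to deduce the Lagrange inversion formula from identity (\ref{Eq:cayley1}) combined with the HMY formula (the corollary after Theorem \ref{stefa}), via a single substitution.

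First, I would rewrite $\mathcal{A}_F[n]$ as a substitution instance of the Cayley polynomial $\mathcal{A}_n(x_0,x_1,\ldots,x_{n-1})$. By definition $\mathcal{A}_F[n]=\sum_T\prod_{v}F[T^{-1}(v)]$, and since $F[T^{-1}(v)]=f_{|T^{-1}(v)|}=f_{\mathrm{d}_T(v)}$, one obtains
\begin{equation*}
\mathcal{A}_F[n]=\sum_{T}\prod_{v=1}^{n}f_{\mathrm{d}_T(v)}=\mathcal{A}_n(f_0,f_1,\ldots,f_{n-1}),
\end{equation*}
where $f_k=F[k]=F^{(k)}(0)$ are the Taylor coefficients of $F$ at the origin.

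Next I would invoke Eq. (\ref{Eq:cayley1}) of Proposition \ref{Prop:cayley} to replace the Cayley polynomial by the iterated higher-order directional derivative:
\begin{equation*}
\mathcal{A}_n(x_0,x_1,\ldots,x_{n-1})=\Delta_{n-1}\,x_0^{n}.
\end{equation*}
Specializing $x_i\leftarrow F^{(i)}(0)$ yields
\begin{equation*}
\mathcal{A}_F[n]=(\Delta_{n-1}\,x_0^{n})\bigl(F(0),F'(0),\ldots,F^{(n-1)}(0)\bigr).
\end{equation*}

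Finally, I would apply the HMY formula (the Corollary of Theorem \ref{stefa}) to the single-variable polynomial $P(x_0)=x_0^{n}$ composed with $G=F$. The HMY identity gives
\begin{equation*}
(P\circ F)^{(n-1)}(t)=\Delta_{n-1}P\bigl(F(t),F'(t),\ldots,F^{(n-1)}(t)\bigr),
\end{equation*}
and evaluating at $t=0$ transforms the right-hand side into exactly the quantity appearing in the previous step, while the left-hand side is $\frac{d^{n-1}}{dt^{n-1}}F^{n}(t)\big|_{t=0}$. Chaining the three equalities finishes the proof.

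There is no real obstacle here: once the correspondence between the combinatorial weight $\prod_v F[T^{-1}(v)]$ and the evaluation $\mathcal{A}_n(f_0,\ldots,f_{n-1})$ is noticed, the result is a direct corollary of (\ref{Eq:cayley1}) and HMY. The only point that deserves care is the bookkeeping of coefficients versus Taylor derivatives (ensuring $F[k]=F^{(k)}(0)$ in the exponential generating function convention used throughout the paper), so that the substitution $x_i\leftarrow F^{(i)}(0)$ indeed reproduces $\mathcal{A}_F[n]$.
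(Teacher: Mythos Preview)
Your argument is correct and follows essentially the same route the paper takes in the discussion immediately after Proposition~\ref{Lagrange}: there the paper notes that $\mathcal{A}_{F_0}[n]=\mathscr{D}^{n-1}x_0^n$ (which is Eq.~(\ref{Eq:cayley1})) and that $F_0^n(t)=e^{t\mathscr{D}}x_0^n$, so that $\frac{d^{n-1}}{dt^{n-1}}F_0^n(t)\big|_{t=0}=\mathscr{D}^{n-1}x_0^n$, and then specializes the independent variables $x_i$ to the coefficients of an arbitrary series. You do the same thing in the finite-variable language, using $\Delta_{n-1}$ and the HMY corollary in place of $\mathscr{D}$ and the one-parameter group identity; since $\mathscr{D}^{n-1}x_0^n=\Delta_{n-1}x_0^n$ and the HMY substitution is exactly what $e^{t\mathscr{D}}$ implements, the two arguments are the same computation in different notation.

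One small point of care: the HMY corollary in the paper is stated under the hypothesis $G(0)=0$, whereas here $F(0)\neq 0$. This is harmless because your outer function $P(x_0)=x_0^{n}$ is a polynomial, so $P(F(t))$ is a well-defined formal power series and Fa\`a di Bruno (hence HMY via Theorem~\ref{stefa}) applies without that restriction; it would be worth saying this explicitly.
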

Consider the infinite set $X=\{x_0, x_1, x_2, \dots\}$, and let \begin{equation}F_0(t)=\sum_{k=0}^{\infty}x_k\frac{t^k}{k!}.\end{equation} The $F_0(t)$-enriched trees are the same trees enumerated by Eq. (\ref{Eq:cayley1}). The weight of each vertex $v$ being the variable $x_{d_T(v)}$. 
Let $\phi_k(X)=x_{k+1}$, a clearly summable family. The derivation $$\mathscr{D}:=\mathscr{D}^{\boldsymbol\phi}=\sum_{k=0}^{\infty}x_{k+1}\partial_k$$ is the extension of $\mathcal{D}_n$ to an infinite number of variables. 
We have that $$F_{x_0}(t)=e^{t\mathscr{D}}x_0=\sum_{k=0}^{\infty}\mathscr{D}^n x_0\frac{t^n}{n!}=F_0(t)$$
By Corollary \ref{important}, $F_0^n(t)=e^{t\mathscr{D}}x_0^n.$
Lagrange inversion gives us Formula (\ref{Eq:cayley1}) 
$$\mathcal{A}_{F_0}[n]=\frac{d^{n-1}}{dt^{n-1}}F_0^{n}(t)|_{t=0}=\frac{d^{n-1}}{dt^{n-1}}e^{t\mathscr{D}}x_0^n|_{t=0}=e^{t\mathscr{D}}\mathscr{D}^{n-1}x_0^n|_{t=0}=\mathscr{D}^{n-1}x_0^n.$$ Conversely, assuming $\mathcal{A}_{F_0}[n]=\mathscr{D}^{n-1}x_0^n$ and going backwards we obtain $$\mathcal{A}_{F_0}[n]=\frac{d^{n-1}}{dt^{n-1}}F_0^{n}(t)|_{t=0}.$$
Since the variables $x_0, x_1, x_2,\dots$ are algebraically independent,  they can be replace by the coefficients of any other series, and we obtain the Lagrange inversion formula. 

Lagrange inversion formula for enriched trees series of the form $\mathcal{A}_{H(F_x(t))_{x\in X}}(t)$, $F_x(t)=e^{t\mathscr{D}}x$, gives us a powerful computational device.

\begin{cor}\label{Cor:generalchen}\normalfont    
Let $H(X)$ be a formal power series in $\mathcal{R}[[X]]$. The coefficient $\mathcal{A}_{H(F_x(t))_{x\in X}}[n]$ is given by
\begin{equation}
\mathcal{A}_{H(F_x(t))_{x\in X}}[n]=(\mathscr{D}^{\boldsymbol{\phi}})^{n-1}H^n(X)
\end{equation}
In particular we have
\begin{equation}
\mathcal{A}_{F_x(t)}[n]=(\mathscr{D}^{\boldsymbol{\phi}})^{n-1}x^n,\; x\in X.
\end{equation}
\end{cor}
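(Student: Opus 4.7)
The plan is to apply the Lagrange inversion formula (Proposition \ref{Lagrange}) to the series $G(t):=H(F_x(t))_{x\in X}\in\mathcal{R}[[t]]$, whose constant term in $t$ is $H(F_x(0))_{x\in X}=H(X)$, so that the Lagrange hypothesis is met as soon as $H(X)$ is nonzero. Proposition \ref{Lagrange} then yields
\begin{equation*}
\mathcal{A}_{H(F_x(t))_{x\in X}}[n] \;=\; \left.\frac{d^{n-1}}{dt^{n-1}}G(t)^{n}\right|_{t=0}.
\end{equation*}

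The first key step is to rewrite $G(t)$ via Corollary \ref{important}: by that corollary, $G(t)=H(F_x(t))_{x\in X}=e^{t\mathscr{D}^{\boldsymbol{\phi}}}H(X)$. Since $\mathscr{D}^{\boldsymbol{\phi}}$ is a derivation on $\mathcal{R}$, the operator $e^{t\mathscr{D}^{\boldsymbol{\phi}}}$ is multiplicative on $\mathcal{R}[[t]]$ (this is the same multiplicativity invoked in the proof of Corollary \ref{important} to commute $e^{t\mathscr{D}^{\boldsymbol{\phi}}}$ with the monomial $X^{\mathbf{k}}$). Consequently
\begin{equation*}
G(t)^{n}\;=\;\bigl(e^{t\mathscr{D}^{\boldsymbol{\phi}}}H(X)\bigr)^{n}\;=\;e^{t\mathscr{D}^{\boldsymbol{\phi}}}\bigl(H(X)^{n}\bigr).
\end{equation*}

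The second key step is to move the $(n-1)$-st derivative in $t$ through the exponential. Term-by-term in the defining series (\ref{parametergroup}) one gets $\tfrac{d}{dt}e^{t\mathscr{D}^{\boldsymbol{\phi}}}=e^{t\mathscr{D}^{\boldsymbol{\phi}}}\mathscr{D}^{\boldsymbol{\phi}}$, and iterating,
\begin{equation*}
\frac{d^{n-1}}{dt^{n-1}}\,e^{t\mathscr{D}^{\boldsymbol{\phi}}}H(X)^{n}\;=\;e^{t\mathscr{D}^{\boldsymbol{\phi}}}\,(\mathscr{D}^{\boldsymbol{\phi}})^{n-1}H(X)^{n}.
\end{equation*}
Setting $t=0$ collapses $e^{t\mathscr{D}^{\boldsymbol{\phi}}}$ to the identity on $\mathcal{R}$, leaving $(\mathscr{D}^{\boldsymbol{\phi}})^{n-1}H(X)^{n}$, which is the asserted formula. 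The special case $\mathcal{A}_{F_x(t)}[n]=(\mathscr{D}^{\boldsymbol{\phi}})^{n-1}x^{n}$ is then immediate by choosing $H(X)=x$, since $F_x(t)$ is exactly $H(F_z(t))_{z\in X}$ in that case.

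The only nontrivial point I expect is the justification that the manipulations above are legitimate in $\mathcal{R}[[t]]=\mathbb{C}[[X,t]]$ when $X$ is infinite; specifically, that $e^{t\mathscr{D}^{\boldsymbol{\phi}}}$ acts as a ring homomorphism and commutes with $d/dt$ at the level of formal series. This is handled by the preservation of summability (Lemma \ref{preserv}) together with the Leibniz rule for the derivation $\mathscr{D}^{\boldsymbol{\phi}}$, both of which are already available in the paper, so the proof reduces to assembling these ingredients in the order above.
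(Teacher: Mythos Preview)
Your proof is correct and follows essentially the same route as the paper: apply Lagrange inversion (Proposition~\ref{Lagrange}), rewrite the $n$-th power of the enriching series via the one-parameter group, and then pass $d^{n-1}/dt^{n-1}$ through $e^{t\mathscr{D}^{\boldsymbol{\phi}}}$ before setting $t=0$. The only cosmetic difference is that the paper applies Corollary~\ref{important} directly to $H^n(X)$ (using that substitution is a ring map, so $(H(F_x(t)))^n=H^n(F_x(t))_{x\in X}$), whereas you apply it to $H(X)$ and then invoke multiplicativity of $e^{t\mathscr{D}^{\boldsymbol{\phi}}}$; these are equivalent.
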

\begin{proof}By Corollary \ref{important}, $H^n(F_x(t))_{x\in X}=e^{t\mathscr{D}^{\boldsymbol{\phi}}}H^n(X).$
	By the Lagrange inversion formula
	\begin{eqnarray*}
	\mathcal{A}_{H(F_x(t))_{x\in X}}[n]=\frac{d^{n-1}}{dt^{n-1}}H^n(F_x(t))_{x\in X}|_{t=0}&=&\frac{d^{n-1}}{dt^{n-1}}e^{t\mathscr{D}^{\boldsymbol{\phi}}}H^n(X)|_{t=0}\\&=&e^{t\mathscr{D}^{\boldsymbol{\phi}}}(\mathscr{D}^{\boldsymbol{\phi}})^{n-1}H^n(X)|_{t=0}.
	\end{eqnarray*}\end{proof}
	There are two simple and important cases of  Corollary \ref{Cor:generalchen}, $H(X)=e^x$ and $H(X)=\frac{1}{1-x}$. In the first one, we enumerate rooted trees enriched with assemblies (unordered tuples) of configurations enumerated by $F_x(t)$, $\mathcal{A}_{e^{F_x(t)}}[n]$. In the second, rooted trees enriched with tuples of structures enumerated by $F_x(t)$. $\mathcal{A}_{\frac{1}{1-F_x(t)}}[n]$. However, the most interesting examples of enriched trees are obtained by enriching with assemblies of non-empty configurations of $F_x(t)$, and with tuples of non-empty configurations of $F_x(t)$. Respectively  $\mathcal{A}_{e^{F_x^+(t)}}[n]$ and $\mathcal{A}_{\frac{1}{1-F_x^+(t)}}[n]$. 
	\begin{theo}
		We have the formulas
		\begin{eqnarray}\label{enrichedwforests}
		\mathcal{A}_{e^{F_x^+(t)}}[n]&=&e^{-nx}(\mathscr{D}^{\boldsymbol\phi})^{n-1}e^{nx}\\\label{enrichedwlinearforests}
		(\mathscr{D}^{\boldsymbol\phi})^{n-1}(1-x)^{-n}&=&\sum_{k=1}^{n-1}\frac{c_{n,k}(X)}{(1-x)^{n+k}}
		\end{eqnarray}
		\noindent Where in Eq. (\ref{enrichedwlinearforests}), the formal power series $c_{n,k}(X)=\mathcal{A}^{\underline{k}}_{\frac{1}{1-F_x^+(t)}}[n]$ enumerates the configurations of $\mathcal{A}_{\frac{1}{1-F_x^+(t)}}[n]$ enriched with tuples of $F_x^+(t)$-configurations having $k$ blocks in total
		$$\mathcal{A}_{\frac{1}{1-F_x^+(t)}}[n]=\sum_{k=1}^{n-1}\mathcal{A}^{\underline{k}}_{\frac{1}{1-F_x^+(t)}}[n].$$
			
	\end{theo}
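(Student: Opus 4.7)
The plan is to derive both identities directly from Corollary \ref{Cor:generalchen}, which gives $\mathcal{A}_{H(F_x(t))_{x\in X}}[n] = (\mathscr{D}^{\boldsymbol{\phi}})^{n-1}H^n(X)$, by choosing appropriate $H$ and then relating the resulting enriched tree series to those appearing in the statement. For (\ref{enrichedwforests}) I would take $H(X) = e^x$, so that $H(F_x(t))_{x \in X} = e^{F_x(t)}$; for (\ref{enrichedwlinearforests}) I would take $H(X) = (1-x)^{-1}$, so that $H(F_x(t))_{x \in X} = (1-F_x(t))^{-1}$. In each case I still need to translate $\mathcal{A}_{e^{F_x(t)}}[n]$ and $\mathcal{A}_{(1-F_x(t))^{-1}}[n]$ into their $F_x^+$ counterparts.

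For the first identity, since $F_x^+(t) = F_x(t) - x$, we have the factorization $e^{F_x(t)} = e^x\cdot e^{F_x^+(t)}$. A trivial but crucial observation is that if $F(t) = c\cdot G(t)$ with $c$ constant in $t$, then $F[V] = c\cdot G[V]$ for every finite set $V$, so from the enumerative formula $\mathcal{A}_F[n] = \sum_T \prod_{v=1}^{n}F[T^{-1}(v)]$ one obtains $\mathcal{A}_F[n] = c^n \mathcal{A}_G[n]$. Applied with $c = e^x$, this yields $\mathcal{A}_{e^{F_x(t)}}[n] = e^{nx}\,\mathcal{A}_{e^{F_x^+(t)}}[n]$, and solving for $\mathcal{A}_{e^{F_x^+(t)}}[n]$ together with Corollary \ref{Cor:generalchen} (which gives $\mathcal{A}_{e^{F_x(t)}}[n] = (\mathscr{D}^{\boldsymbol{\phi}})^{n-1}e^{nx}$) immediately produces (\ref{enrichedwforests}).

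For the second identity, the key ingredient is the geometric expansion
\[
\frac{1}{1-F_x(t)} \;=\; \frac{1}{(1-x)-F_x^+(t)} \;=\; \sum_{k\ge 0}\frac{(F_x^+(t))^k}{(1-x)^{k+1}}.
\]
Combinatorially this reads: a sequence of $F_x(t)$-structures is obtained from a sequence of $k$ non-empty $F_x^+(t)$-blocks by freely inserting arbitrarily many empty ``$x$-slots'' in the $k+1$ gaps, contributing a factor $(1-x)^{-(k+1)}$. Now for a rooted tree $T$ on $[n]$ enriched by $(1-F_x(t))^{-1}$, at each vertex $v$ one picks $k_v$ non-empty $F_x^+(t)$-blocks forming an ordered partition of $T^{-1}(v)$, weighted by $(1-x)^{-(k_v+1)}$. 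Multiplying across the $n$ vertices gives a factor $(1-x)^{-(k+n)}$ with $k = \sum_v k_v$, and grouping by total block count $k$ yields
\[
\mathcal{A}_{\frac{1}{1-F_x(t)}}[n] \;=\; \sum_{k\ge 0}\frac{c_{n,k}(X)}{(1-x)^{n+k}},
\]
where $c_{n,k}(X) = \mathcal{A}^{\underline{k}}_{\frac{1}{1-F_x^+(t)}}[n]$ by definition. For $n\ge 2$ we have $c_{n,0}=0$ (a non-root vertex requires at least one block) and $c_{n,k}=0$ for $k\ge n$ (the total number of non-empty blocks is bounded by the number of edges, $n-1$), so the sum effectively runs over $1\le k\le n-1$; combining with Corollary \ref{Cor:generalchen} applied to $H(X) = (1-x)^{-1}$ gives (\ref{enrichedwlinearforests}).

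The main obstacle is the bookkeeping in the second formula: one must carefully verify the combinatorial reading of the geometric expansion of $(1-F_x(t))^{-1}$ at the vertex level, track how the local factors $(1-x)^{-(k_v+1)}$ combine into the global $(1-x)^{-(k+n)}$, and identify the resulting partition-enriched tree species with $\mathcal{A}^{\underline{k}}_{\frac{1}{1-F_x^+(t)}}[n]$. Once these combinatorial identifications are in hand, both identities reduce at once to Corollary \ref{Cor:generalchen}.
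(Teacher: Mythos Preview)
Your proposal is correct and follows essentially the same approach as the paper: both identities are deduced from Corollary~\ref{Cor:generalchen} with $H(X)=e^{x}$ and $H(X)=(1-x)^{-1}$, then translated to the $F_x^+$ versions via the factorization $e^{F_x(t)}=e^{x}e^{F_x^+(t)}$ (pulling out $e^{nx}$ across the $n$ vertices) and the geometric expansion $(1-F_x(t))^{-1}=\sum_{k\ge 0}(F_x^+(t))^k(1-x)^{-(k+1)}$ (collecting the local factors $(1-x)^{-(k_v+1)}$ into $(1-x)^{-(n+k)}$). Your added remark pinning down the range $1\le k\le n-1$ is a small but welcome clarification the paper leaves implicit; the only minor imprecision is the phrase ``a non-root vertex requires at least one block'' --- more accurately, the fiber of the parent of a non-root vertex is nonempty, forcing at least one nonempty block there.
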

	\begin{proof}
		To prove Eq. (\ref{enrichedwforests}), observed that the configurations of  $\mathcal{A}_{e^{F_x(t)}}[n]$ are rooted trees with $n$ vertices whose fibers are enriched (weighted) with the coefficients of the series $e^{x+F_x^+(t)}=e^xe^{F_x^+(t)}$. Since there are $n$ vertices we have
		$$\mathcal{A}_{e^{F_x(t)}}[n]=e^{nx}\mathcal{A}_{e^{F_x^+(t)}}[n]=(\mathscr{D}^{\boldsymbol\phi})^{n-1}e^{nx}.$$
		To prove Eq. (\ref{enrichedwlinearforests}), observe that the configurations of $\frac{1}{1-F_x(t)}$ are tuples of sets, $$(B_1,B_2,\dots,B_k),$$ each of them weighted with the coefficient $F_x[B_i]$, $i=1,\dots,k$. If some $B_i=\emptyset$, its weight is equal to $F_x[\emptyset]=x$. The fibers of each tree enumerated by $\mathcal{A}_{\frac{1}{1-F_x(t)}}[n]$ are enriched with this weighted tuples. Denote by $\mathcal{A}^{\underline{k}}_{\frac{1}{1-F_x(t)}}[n]$ the configurations of $\mathcal{A}_{\frac{1}{1-F_x(t)}}[n]$ having exactly $k$ nonempty blocks in its fibers. If the fiber of a vertex $v$ has $k_v$ nonempty blocks, we can put as many empty configurations we want between  them (in $k_v+1$ available positions, $k=\sum_{v=1}^n k_v$). Since each of the empty sets in the tuple has weight $x$, the total weight added by the empty configurations is equal to $(1-x)^{-k_v-1}$, for each vertex $v$. Taking the product over all the $n$ vertices of the tree we have that the total weight added by the empty configurations is equal to $(1-x)^{-n-k}$. Then we get
		$$\mathcal{A}^{\underline{k}}_{\frac{1}{1-F_x(t)}}[n]=\frac{\mathcal{A}^{\underline{k}}_{\frac{1}{1-F_x^+(t)}}[n]}{(1-x)^{n+k}}.$$
	\end{proof}
\subsection{Examples and applications}
\begin{ex}\normalfont  
	For the univariate case, $X=\{x\}$, for a series $H(x)$, the trees enriched with $H(F_x(t))$ are counted by 
	$$\mathcal{A}_{H(F_x(t))}[n]=(\phi(x)D)^{n-1}H^n(x).$$ In particular, we have the following formulas for
	\begin{enumerate}
		\item The rooted trees enriched with the series $F_x(t)= \mathcal{A}_{\phi}^{\uparrow}(x,t)=e^{t\phi(x)D}x$ 
		$$\mathcal{A}_{F_x(t)}[n]=(\phi(x)D)^{n-1}x^n.$$
		As examples of this kind of trees we have
		\begin{enumerate}
			\item For the operator $xD$, $F_x(t)=xe^t.$ The coefficient $\mathcal{A}_{xe^t}[n]$ counts the number of rooted trees with $n$  vertices, each vertex having weight $x$. We deduce Cayley's formula 
			$$ \mathcal{A}_{xe^t}[n]=(xD)^{n-1}x^n=n^{n-1}x^n.$$
			\item For the operator $x^2D$,  $$F_x(t)=\frac{x}{1-xt}=\sum_{n=0}^{\infty}n!x^{n+1}\frac{t^n}{n!}.$$ The $n\mathrm{th}$ coefficient of this series enumerates $n$-linear orders each of them having weight $x^{n+1}$. Then, the coefficient $\mathcal{A}_{F_x(t)}[n]$ counts the number of plane trees with $n$  vertices, each vertex and arc having weight $x$. A few computations give us the formula
			$$\mathcal{A}_{\frac{x}{1-xt}}[n]=(x^2D)^{n-1}x^n=n(n+1)\dots(2n-2)x^{2n-1}=n^{(n-1)}x^{2n-1}.$$
		\end{enumerate}

		\item\label{forests} The rooted trees enriched with $e^{F^+_x(t)}=e^{\mathcal{A}_{\phi}^{\uparrow,+}(x,t)},$ (Forests of increasing trees, excluding the empty  tree with weight $x$.) 
		$$\mathcal{A}_{e^{F_x(t)}}[n]=e^{-nx}(\phi(x)D)^{n-1}e^{nx}.$$
		\item\label{orderesforests} The rooted trees enriched with $\frac{1}{1-F_x(t)}$ (linearly ordered forests of increasing trees, including the empty trees  with weight $x$)
		$$\mathcal{A}_{\frac{1}{1-F_x(t)}}[n]=(\phi(x)D)^{n-1}(1-x)^{-n}=\sum_{k=1}^{n-1}\mathcal{A}^{\underline{k}}_{\frac{1}{1-F^+_x(t)}}[n](1-x)^{-(n+k)}.$$ 
	
	\end{enumerate}
	 \end{ex}
	 From formulas in Items \ref{forests} and \ref{orderesforests},  we get a couple of interesting results.
	 \begin{prop}\label{touchardtrees}\normalfont  
	 	The number of trees, enriched with set partitions, each block weighted with $x$, is given by the formula
	 	\begin{equation} \label{tochardtrees}
	 	\mathcal{A}_{e^{x(e^t-1)}}[n]=T_{n-1}(nx),
	 	\end{equation}
	 	\noindent where $T_n(x)$ is the Touchard polynomial.
	 	In particular, the number of set partitions enriched trees is given by $T_{n-1}(n).$ 
	 \end{prop}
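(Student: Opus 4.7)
The plan is to read off the statement as a direct specialization of Eq.~(\ref{enrichedwforests}) together with the classical Rodrigues-type presentation of the Touchard polynomials. The two inputs are purely computational: a scaling identity for the operator $xD$, and the identity $T_m(y)=e^{-y}(yD_y)^m e^y$.

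First I would specialize the setup to $X=\{x\}$ and $\phi(x)=x$, so that the derivation becomes $\mathscr{D}^{\boldsymbol{\phi}}=xD$ and the system in Corollary~\ref{Cor:gendifequation} is $y'=y,\; y(0)=x$, whose solution is $F_x(t)=xe^t$. Consequently $F_x^+(t)=x(e^t-1)$, and assemblies of non-empty $F_x(t)$-configurations are precisely set partitions with each block weighted by $x$; in particular $e^{F_x^+(t)}=e^{x(e^t-1)}$ is the exponential generating function of the Touchard polynomials. Applying Eq.~(\ref{enrichedwforests}) of the preceding theorem directly gives
\begin{equation*}
\mathcal{A}_{e^{x(e^t-1)}}[n]=e^{-nx}(xD)^{n-1}e^{nx}.
\end{equation*}

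Next I would reduce the right-hand side to a Touchard polynomial by a change of variable. Set $y=nx$. For any series $f(x)=g(y)$, the chain rule gives $xDf(x)=x\cdot n g'(y)=y g'(y)=(yD_y)g(y)$, so by iteration $(xD)^{m}f(x)=[(yD_y)^m g(y)]_{y=nx}$ for every $m\geq 0$. Taking $f(x)=e^{nx}$ (so $g(y)=e^{y}$) and $m=n-1$ yields
\begin{equation*}
(xD)^{n-1}e^{nx}=\bigl[(yD_y)^{n-1}e^{y}\bigr]_{y=nx}.
\end{equation*}

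Finally, I would invoke the classical Rodrigues formula $(yD_y)^{m}e^{y}=e^{y}T_m(y)$ (which, as noted in the example preceding Eq.~(\ref{Eq:Rodrigues}) for the operator $xD$, is exactly the univariate case of our own Rodrigues-like formula~(\ref{Eq:Rodrigues})). Substituting with $m=n-1$ and $y=nx$ gives $(xD)^{n-1}e^{nx}=e^{nx}T_{n-1}(nx)$, and multiplying by $e^{-nx}$ yields the desired identity $\mathcal{A}_{e^{x(e^t-1)}}[n]=T_{n-1}(nx)$. The unweighted count of set-partition-enriched trees on $[n]$ is then obtained by setting $x=1$, giving $T_{n-1}(n)$. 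There is no real obstacle here; the only point requiring care is the scaling identity $(xD_x)\!\mid_{y=nx}=yD_y$, which is what turns the enrichment exponent $n$ into the argument $nx$ of the Touchard polynomial.
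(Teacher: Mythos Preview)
Your proof is correct and follows essentially the same approach as the paper: both apply Eq.~(\ref{enrichedwforests}) (equivalently Item~\ref{forests}) to obtain $\mathcal{A}_{e^{x(e^t-1)}}[n]=e^{-nx}(xD)^{n-1}e^{nx}$ and then invoke the Rodrigues-type formula for the Touchard polynomials. You are simply more explicit than the paper about the scaling step $y=nx$ that converts $e^{-nx}(xD)^{n-1}e^{nx}$ into $T_{n-1}(nx)$, which the paper leaves as ``easy to see.''
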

	 	\begin{proof}
	 		 By Item \ref{forests}, $\mathcal{A}_{e^{x(e^t-1)}}[n]=e^{nx}(xD)^{n-1}e^{nx}.$ From general Rodrigues-like formula (\ref{Eq:Rodrigues})  it is easy to see that $e^{nx}(xD)^{k}e^{nx}$  is equal to $T_k(nx)$ for arbitrary $k$.
	 	\end{proof}
	 	 See \cite{Husimi}, \cite{Greene}, \cite{Kreweras}, \cite{Gessel} where these numbers are interpreted in terms of hypertrees. The bijection between rooted hypertrees (see definition in \cite{Gessel}) and the present interpretation is easy.
	 \begin{prop}\normalfont The number of trees on $n$ vertices, enriched with partitions whose blocks are linearly ordered, each block weighted with $x$, is given by the sum of numerators in the expansion of
	 $(xD)^{n-1}(1-x)^{-n}$ in simple fractions.\end{prop}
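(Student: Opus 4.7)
The plan is to specialize the general formula of Item \ref{orderesforests} (equivalently Eq.~(\ref{enrichedwlinearforests})) to the univariate case $X=\{x\}$ with $\phi(x)=x$, and then interpret both sides combinatorially. First I would compute $F_x(t)=e^{txD}x$. Since $(xD)^k x = x$ for every $k\geq 1$, we get $F_x(t) = xe^t$, hence $F_x^+(t) = x(e^t-1)$. Consequently
\[
\frac{1}{1-F_x^+(t)}=\frac{1}{1-x(e^t-1)}
\]
is precisely the exponential generating function for ordered set partitions with each block weighted by $x$, and so $\mathcal{A}_{1/(1-F_x^+(t))}[n]$ enumerates rooted trees on $n$ vertices enriched with ordered set partitions whose blocks each carry weight $x$, graded by the total number of blocks.

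Next I would invoke the already established partial-fraction identity
\[
(xD)^{n-1}(1-x)^{-n}\;=\;\sum_{k=1}^{n-1}\frac{c_{n,k}(x)}{(1-x)^{n+k}},
\qquad c_{n,k}(x)=\mathcal{A}^{\underline{k}}_{1/(1-F_x^+(t))}[n],
\]
which is the $X=\{x\}$ instance of Eq.~(\ref{enrichedwlinearforests}). The numerators $c_{n,k}(x)$ are precisely the distinct denominator contributions in the partial-fraction expansion, so they coincide with the ``numerators in simple fractions'' of the statement. Combinatorially, $c_{n,k}(x)$ enumerates those enriched trees whose fibers together carry exactly $k$ nonempty blocks; since $F_x^+[B]=x$ for every nonempty $B$, one in fact has $c_{n,k}(x)=a_{n,k}\,x^k$, where $a_{n,k}$ is the pure count.

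To finish, I would simply sum:
\[
\sum_{k=1}^{n-1} c_{n,k}(x) \;=\; \sum_{k=1}^{n-1}\mathcal{A}^{\underline{k}}_{1/(1-F_x^+(t))}[n] \;=\; \mathcal{A}_{1/(1-F_x^+(t))}[n],
\]
which by the combinatorial description above is exactly the weighted count of trees on $n$ vertices enriched with linearly ordered partitions, each block weighted by $x$. The statement is then established. There is no genuine obstacle here: the structural work was done in the proof of Eq.~(\ref{enrichedwlinearforests}); the present proposition is the direct combinatorial reading of that identity after the substitution $\phi(x)=x$, the only small care being to check the elementary computation $F_x(t)=xe^t$ and the identification of $1/(1-x(e^t-1))$ with the EGF of ordered set partitions.
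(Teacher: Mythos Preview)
Your argument is correct and is precisely the route the paper intends: the proposition is stated without proof because it is the immediate specialization of Item~\ref{orderesforests} / Eq.~(\ref{enrichedwlinearforests}) to $\phi(x)=x$, and you have carried out exactly that specialization, correctly identifying $F_x(t)=xe^t$, $F_x^+(t)=x(e^t-1)$, and $\tfrac{1}{1-x(e^t-1)}$ as the exponential generating function of ordered set partitions with each block weighted by $x$. Your reading of ``partitions whose blocks are linearly ordered'' as ordered set partitions is the right one, as the worked example $(xD)^2(1-x)^{-3}=12x^2(1-x)^{-5}+3x(1-x)^{-4}$ confirms.
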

 For example:
 \begin{eqnarray*}(xD)^2\frac{1}{(1-x)^3}&=&\frac{12 x^2}{(1-x)^5}+\frac{3 x}{(1-x)^4}\\(xD)^{3}\frac{1}{(1-x)^4}&=&\frac{120 x^3}{(1-x)^7}+\frac{60 x^2}{(1-x)^6}+\frac{4 x}{(1-x)^5}\\
 (xD)^{4}\frac{1}{(1-x)^5}&=&\frac{1680 x^4}{(1-x)^9}+\frac{1260 x^3}{(1-x)^8}+\frac{210 x^2}{(1-x)^7}+\frac{5 x}{(1-x)^6}.
\end{eqnarray*}

		  \begin{ex}\normalfont  
		  	Consider the set of variables $X=\{x_0,x_1,x_2,\dots\}$. We have
		  	\begin{eqnarray*}
		  	e^{-nx_0}\mathscr{D}^{n-1}e^{nx_0}&=&Y_{n-1}(nx_1,nx_2,\dots,nx_{n-1})\\
		  	e^{-nx_1}\mathscr{D}^{n-1}e^{nx_1}&=&Y_{n-1}(nx_2,nx_3,\dots,nx_{n})
		 \end{eqnarray*}
		 The Bell polynomials $Y_{n-1}(nx_1,nx_2,\dots,nx_{n-1})$ and $Y_{n-1}(nx_2,nx_3,\dots, nx_{n})$, respectively enumerate: \begin{enumerate} \item Rooted trees enriched with set partitions according with the size of their blocks. \item Rooted hypertrees by the size of their hyper edges (see \cite{Gessel}).\end{enumerate}
		 The sum of the numerators of the decomposition in simple fractions of \begin{equation*}
		 \mathscr{D}^{n-1}\frac{1}{(1-x_0)^n}
		 \end{equation*}
		 \noindent gives us the generating function of rooted trees enriched with partitions where the blocks are linearly ordered, enumerated by the size of blocks.
		 \end{ex}
		 \begin{ex}
		 	\normalfont We now modify the operator $\mathscr{D}$ in order to enumerate plane trees and obtain another kind of Lagrange inversion. Let $$\mathscr{D}^{\natural}=\sum_{k=0}^{\infty}(k+1)x_{k+1}\partial_k.$$
		  In this case we obtain $F_0^{\natural}(t)$ a generic ordinary formal power series
		  $$F_0^{\natural}(t)=e^{t\mathscr{D}^{\natural}}x_0=\sum_{k=0}^{\infty}x_k t^k.$$
		  The coefficients of the ordinary generating function of the trees $\mathcal{A}_{F_0^{\natural}}(t)$ enumerates the unlabeled rooted plane trees according with the degrees of its vertices.
		  $$\mathbf{Pf}_n(x_0,x_1,\dots, x_{n-1}):=\frac{\mathcal{A}_{F_0^{\natural}}[n]}{n!}=\frac{1}{n!}(\mathscr{D}^{\natural})^{n-1} x_0^n.$$ For example
		  $$\mathbf{Pf}_5(x_0,x_1,\dots, x_{4})=\frac{\mathcal{A}_{F_0^{\natural}}[5]}{5!}=x_4 x_0^4+2 x_2^2 x_0^3+4 x_1 x_3 x_0^3+6 x_1^2 x_2 x_0^2+x_1^4 x_0.$$
		  Making $x_0=1=h_0$ and $x_n=h_n$, $h_n$ being the homogeneous symmetric function, we get that $\mathbf{Pf}_n(1,h_1,\dots, h_{n-1})$ is the Frobenius character of the linear span of parking functions on $n-1$ elements (as a representation of the symmetric group $\mathfrak{G}_{n-1}$) in terms of the homogeneous symmetric functions \cite{G-H}. 
		  \end{ex}
		  \begin{ex}\normalfont  
		  	Let $X=\{x_1,x_2,x_3,\dots;y_0,y_1,y_2,	\dots\}$. The operator 
		  	\begin{equation}
		  	\mathscr{F}=\sum_{k=1}^{\infty}x_{i+1}\partial_{x_i}+x_1\sum_{k=0}^{\infty}y_{i+1}\partial_{y_i}
		  	\end{equation}
		  	\noindent is the extension of $\mathcal{F}_n$ to the the bicolored infinite number of variables of above. By Eq. (\ref{Faa})
		  	$$F_{y_0}(t)=e^{t\mathscr{F}}y_0=P(Q(t)),$$
		  	\noindent where $P(t)=\sum_{k=0}^{\infty}y_k \frac{t^k}{k!}$ and $Q(t)=\sum_{k=1}^{\infty}x_k\frac{t^k}{k!}$. It is easy to check that   $$F_{x_n}(t)=e^{t\mathscr{F}}x_n=Q^{(n)}(t).$$ By the same argument to obtain  (\ref{Faa}) we get $$F_{y_n}(t)=e^{t\mathscr{F}}y_n=P^{(n)}(Q(t)).$$
		  	Now we can use the one parameter group to deduce Fa\`a di Bruno formula,
		  	\begin{eqnarray*}
		  	\frac{d^n}{dt^n}P(Q(t))=\frac{d^n}{dt^n}e^{t\mathscr{F}}y_0=e^{t\mathscr{F}}\mathscr{F}^ny_0&=&e^{t\mathscr{F}}\sum_{\pi\in \Pi[n]}y_{|\pi|}\prod_{B\in\pi}x_{|B|}\\&=&\sum_{\pi\in\Pi[n]}P^{(|\pi|)}(Q(t))\prod_{B\in\pi}Q^{(|B|)}(t).
		  	\end{eqnarray*}
		  	The configurations of $\mathcal{A}_{P(Q(t))}[n]$ are trees enriched with partitions as in Proposition \ref{touchardtrees}, each partition weighted by $y_{|\pi|}\prod_{B}x_{|B|}$. Its inventory is given by the formula
		  	\begin{equation*}
		  	\mathcal{A}_{P(Q(t))}[n]=\mathscr{F}^{n-1}y_0^n.
		  	\end{equation*}
		  \end{ex}

\noindent


\bibliographystyle{amsplain}

\end{document}